\title{A Generalization of Seifert Geometry Based on the Siegel Upper Half-Space}
\author{Qing Lan}
\address{Beijing International Center for Mathematical Research}
\email{lanqing@stu.pku.edu.cn}
\date{\today}
    \newcommand{\BC}{{\mathbb {C}}} 
    \newcommand{\BE}{{\mathbb {E}}} 
     \newcommand{\BH}{{\mathbb {H}}}
     \newcommand{\BN}{{\mathbb {N}}}
    \newcommand{\BQ}{{\mathbb {Q}}} \newcommand{\BR}{{\mathbb {R}}}
    \newcommand{\BS}{{\mathbb {S}}}
     \newcommand{\BZ}{{\mathbb {Z}}}
    \newcommand{\CG}{{\mathcal {G}}} \newcommand{\CH}{{\mathcal {H}}}
    \newcommand{\CM}{{\mathcal {M}}} 
    \newcommand{\CO}{{\mathcal {O}}} 
     \newcommand{\CR}{{\mathcal {R}}}
     \newcommand{\CX}{{\mathcal {X}}}
     \newcommand{\CZ}{{\mathcal {Z}}}
     \newcommand{\fH}{{\mathfrak{H}}}
    \newcommand{\id}{{\mathrm{id}}}
    \newcommand{\Isom}{{\mathrm{Isom}}}
    \newcommand{\Stab}{{\mathrm{Stab}}}
    \newcommand{\vol}{{\mathrm{vol}}}
    \theoremstyle{plain}
    \newtheorem{thm}{Theorem}[section] \newtheorem{cor}[thm]{Corollary}
    \newtheorem{lem}[thm]{Lemma}  \newtheorem{prop}[thm]{Proposition}
     \newtheorem{defn}[thm]{Definition}
\newtheorem{conv}[thm]{Convention}
    \newtheorem{lem-defn}[thm]{Lemma-Definition}
\theoremstyle{remark} \newtheorem{remark}[thm]{Remark}
\theoremstyle{remark} 
\theoremstyle{remark} 
    \newcommand{\Poincare}{Poincar\'{e}~}
    \numberwithin{equation}{section}
\newcommand\sltrt{\widetilde{\mathrm{SL}(2,\mathbb{R})}}
\newcommand\sltr{\mathrm{SL}(2,\mathbb{R})}
\newcommand\sltrtxzr{\widetilde{\mathrm{SL}(2,\mathbb{R})} \times_{\mathbb{Z}}\mathbb{R}}
\newcommand\pspfr{\mathrm{PSp}(4,\mathbb{R})}
\newcommand\spfr{\mathrm{Sp}(4,\mathbb{R})}
\newcommand\spfrt{\widetilde{\mathrm{Sp}(4,\mathbb{R})}}
\newcommand\utwo{\mathrm{U}(2)}
\newcommand\sutwo{\mathrm{SU}(2)}
\newcommand\utwot{\widetilde{\mathrm{U}(2)}}
\newcommand\Psptnr{\mathrm{PSp}(2n,\mathbb{R})}
\newcommand\Sptnr{\mathrm{Sp}(2n,\mathbb{R})}
\newcommand\Sptnrt{\widetilde{\mathrm{Sp}(2n,\mathbb{R})}}
\newcommand\Un{\mathrm{U}(n)}
\newcommand\Sun{\mathrm{SU}(n)}
\newcommand\Unt{\widetilde{\mathrm{U}(n)}}
\newcommand\gxzghrh{\CG\times_{Z(\CG)}\CH\CR/\CH}
\newcommand\hrh{\CH\CR/\CH}
\newcommand\dett{\widetilde{\mathrm{det}}}
\begin{document}
\maketitle
\begin{abstract}
The Seifert geometry, $\widetilde{\mathrm{SL}(2,\mathbb{R})}$-geometry, is one of Thurston's eight $3$-dimensional geometries. It fibers over the hyperbolic plane $\mathbb{H}^2$, which is a special case of the Siegel upper half-space $\mathrm{Sp}(2n,\mathbb{R})\curvearrowright {\mathfrak{H}}_n$. In this paper we construct an analogous geometry fibering over the Siegel upper half-space, and provide a volume formula for some manifolds with this geometry. For $n=2$, a prototype is constructed via the normal bundle of an equivariant embedding into a Grassmannian manifold. It turns out that this geometry is the homogeneous space given by a central extension of $\widetilde{\mathrm{Sp}(2n,\mathbb{R})}$, modulo its maximal compact subgroup. The volume of a Siegel--Seifert closed manifold of this geometry is shown to be the length of the fiber circle times the Euler characteristic of the base manifold, up to a sign. Examples of Siegel--Seifert manifolds are provided, and it is shown that the volume of representations for this geometry is constant on every path-connected component of the representation space.

\textbf{Keywords:} Seifert geometry, Siegel upper half-space, Principal bundle, Homogeneous space, Representation volume

\end{abstract}



\section{Introduction}

\subsection{Background. } 

A \textbf{model geometry} $(G,X)$, in the sense of Thurston \cite[Definition~3.8.1]{thurston}, is a Lie group $G$ acting by diffeomorphisms on a manifold $X$, such that 

(i) $X$ is connected and simply connected; 

(ii) $G$ acts transitively (and effectively) on $X$  with compact point stabilizers;

(iii) $G$ is not contained in any larger group of diffeomorphisms of $X$
with compact stabilizers of points, i.e. $G$ is maximal; 

(iv) there exists at least one compact manifold modeled on $(G,X)$.

Thurston proved that there are $8$ maximal geometries in dimension $3$, namely 
the Euclidean geometry $\BE^3$, spherical geometry $\BS^3$, hyperbolic geometry $\BH^3$, Seifert geometry $\widetilde{\mathrm{SL}(2,\mathbb{R})}$, product geometries $\BH^2\times \BE^1$ and $\BS^2\times \BE^1$, nilgeometry $\BN\mathrm{il}$ and solgeometry $\BS\mathrm{ol}$. With the exception of $\BH^3$ and $\BS\mathrm{ol}$, the remaining $6$ geometries constitute the Seifert fiber spaces (or Seifert fibrations), fibering over $2$-dimensional orbifolds with fiber $S^1$. For an overview of $3$-dimensional geometries, see \cite{scott} or \cite{martelli}.

Four dimensional geometries are classified by Filipkiewicz \cite{filipkiewicz}: there are 
$18$ distinct geometries along with one infinite family of closely related geometries. 
Unlike $3$-dimensional case in which the Geometrization Conjecture holds, in dimension $4$ manifolds with geometric decomposition are  relatively  rare \cite[Theorem~7.2]{hillman}. The notion of Seifert fibration has been generalized to dimension $4$: a $4$-manifold is Seifert fibered if it is the total space of an orbifold bundle with general fiber a torus or Klein bottle over a $2$-orbifold \cite[Section~7.7]{hillman}. Possible types of geometries that a closed orientable Seifert $4$-manifold can admit are classified by Ue in \cite{ue90,ue91}, in terms of types of the base orbifold.  Five dimensional geometries are classified by Geng in \cite{5dimgeo}.

In this paper we consider a generalization of the Seifert geometry, fibering over the Siegel upper half-space $\mathrm{Sp}(2n,\mathbb{R})\curvearrowright {\mathfrak{H}}_n$ \cite{siegelsympgeo}. The Siegel upper half-space $\mathrm{Sp}(2n,\BR)\curvearrowright \fH_n$ is a generalization of the hyperbolic plane $\mathrm{SL}(2,\mathbb{R}) \curvearrowright \BH^2$. 
As a homogeneous space, the Siegel upper half-space is just $\mathrm{Sp}(2n,\BR)$ modulo its maximal compact subgroup, which is an embedded $\mathrm{U}(n)$. 

The Lie group 
 we construct is a \textbf{topological group extension} of $\Psptnr$ by $\BR$, in the sense of \cite[Section~5]{hu} (See Propositions \ref{prop:exactseq} and \ref{prop:topgrpext}). It is proved in \cite{hu} (see also the survey \cite{stasheff}) that the second continuous cohomology $H^2_c(G;A)$ of a topological group $G$ with coefficients in a continuous $G$-module $A$, classifies topologically split group extensions, i.e. topological group extensions such that, considered as a principal bundle, the bundle is trivial. 
In our case, for the Lie group $\BR$, its classifying space is just a point,  so the sequence in Proposition \ref{prop:exactseq} splits topologically. 

For   a connected, non-compact, simple Lie group $G$ with finite center, $H^2_c(G;\BR)\neq 0$ if and only if $G$ is of Hermitian type (\cite[Lemme~1]{seconddegcontcoh}, see also \cite{thirddegcontcoh});  
if this is the case, $H^2_c(G;\BR)$ is one dimensional. It follows that the only possibilities  admitting topologically split group extensions by $\BR$ (where the action by $G$ is trivial), are the $6$ families listed after \cite[Lemme~1]{seconddegcontcoh}. 
As a first step, we are performing extensions to one of these families and considering the corresponding geometry.

\subsection{Main results. }

We construct a  central extension by $ \BR$ of $\CG:=\Sptnrt$, denoted by  $\gxzghrh$,  
\footnote{The group $\gxzghrh$ is defined in Lemma-Definition \ref{lemdef:ghr} and Definition \ref{def:gxzghrh}.  Here $\CG:=\Sptnrt$, and  $\CH:=\Sun$ is a specified  maximal compact subgroup of $\CG$. The full definition is too complicated to state here. }
 acting on the model space  $\CX:=\Sptnrt/\Sun$, realizing  $\CX$ as $\gxzghrh$ modulo its maximal compact subgroup. The model space is the total space of a principal $\BR$-bundle $\nu: \CX \to  \fH_n  $ over the Siegel upper half space, and the action of $\CG\times_{Z(\CG)}\CH\CR/\CH$ preserves this fibering structure. Based on this fibering structure, we prove a volume formula for a reasonably general class  of closed manifolds called Siegel--Seifert manifolds (Definition \ref{def:siegelseifert}), after fixing 
an invariant volume form (Conventions \ref{conv:volr} and \ref{conv:voleu}):

\begin{thm}
For any Siegel--Seifert subgroup $\Gamma \subset  \CG\times_{Z(\CG)}\CH\CR/\CH$, the volume of $\Gamma\backslash \CX$ is given by 
\[
\vol(\Gamma\backslash \CX)=\pm\vol((\Gamma\cap\ker\eta)\backslash  
\BR    )\cdot \chi(\eta(\Gamma)). 
\]

In particular, for any Siegel--Seifert subgroup $\Gamma$ arising from (Definition \ref{def:arisesfrom}) $\Psptnr$, the volume of $\Gamma\backslash \CX$ is given by 
\[
\vol(\Gamma\backslash \CX)=\pm \chi(\eta(\Gamma)). 
\]

Here  $\eta:  \CG\times_{Z(\CG)}\CH\CR/\CH \to \CG/Z(\CG)\cong \Psptnr$ is given in Proposition \ref{prop:exactseq}. 
\label{thm:vol}
\end{thm}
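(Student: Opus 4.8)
The plan is to prove this as a generalized Gauss--Bonnet statement, reducing the volume integral over $\Gamma\backslash\CX$ to an integral over the base orbifold $\eta(\Gamma)\backslash\fH_n$ by integrating out the fiber circle. First I would exploit the $\BR$-principal bundle structure $\nu:\CX\to\fH_n$ from the main construction: since $\Gamma$ is Seifert-like, $\Gamma\cap\ker\eta$ is a lattice in the fiber $\BR$ (a copy of $\BZ$, up to the central extension bookkeeping), and $\eta(\Gamma)$ is a lattice in $\Psptnr$ acting on $\fH_n$ with compact quotient orbifold. The fibration $\nu$ descends to an orbifold circle bundle $\Gamma\backslash\CX\to\eta(\Gamma)\backslash\fH_n$. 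With the invariant volume form fixed by Conventions \ref{conv:volr} and \ref{conv:voleu} chosen precisely so that it factors as (fiber length form) $\wedge$ $\nu^*(\text{something})$, Fubini gives
\[
\vol(\Gamma\backslash\CX)=\vol\bigl((\Gamma\cap\ker\eta)\backslash\BR\bigr)\cdot\int_{\eta(\Gamma)\backslash\fH_n}\omega,
\]
where $\omega$ is the pushed-down invariant $2n$-form (or rather the top form on the base orbifold) coming from the curvature of the connection on the $\BR$-bundle.

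Next I would identify $\int_{\eta(\Gamma)\backslash\fH_n}\omega$ with $\pm\chi(\eta(\Gamma))$. The key point is that the central extension class in $H^2_c(\Psptnr;\BR)$ defining $\gxzghrh$ is (a multiple of) the class whose curvature form represents the Euler class of the tangent bundle of $\fH_n$ — equivalently, since $\fH_n$ is Hermitian symmetric, the normalized Kähler/Chern--Weil form. Concretely, the one-dimensional space $H^2_c(\Psptnr;\BR)$ is spanned by the class of the invariant symplectic form on $\fH_n$, and the construction via the normal bundle of the equivariant Grassmannian embedding (for $n=2$; in general via the universal property of the extension) pins down the normalization so that this class integrates to the orbifold Euler characteristic over any closed quotient. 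Then the orbifold Gauss--Bonnet theorem gives $\int_{\eta(\Gamma)\backslash\fH_n}e(T\fH_n)=\chi^{\mathrm{orb}}(\eta(\Gamma)\backslash\fH_n)=\chi(\eta(\Gamma))$, with the sign ambiguity absorbing the choice of orientation/sign of the invariant form. The ``in particular'' clause is then immediate: a Seifert-like subgroup arising from $\Psptnr$ (Definition \ref{def:arisesfrom}) is one for which the fiber lattice $\Gamma\cap\ker\eta$ is the standard $\BZ\subset\BR$, so $\vol((\Gamma\cap\ker\eta)\backslash\BR)=1$ under the fixed convention, leaving $\vol(\Gamma\backslash\CX)=\pm\chi(\eta(\Gamma))$.

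The main obstacle I expect is the precise normalization and sign bookkeeping: matching the convention for the invariant volume form (Conventions \ref{conv:volr}, \ref{conv:voleu}) against the normalization implicit in (a) the choice of generator of $H^2_c(\Psptnr;\BR)$ used to build the extension and (b) the curvature-to-Euler-class identification, so that the constant coming out of Chern--Weil theory is exactly $1$ (times a sign) rather than $2\pi$ or $\pi^n n!$ or similar. This is really the content of the theorem, and it is where the Grassmannian-normal-bundle prototype for $n=2$ does its work: it provides an independent geometric computation of the relevant characteristic form that fixes the constant. A secondary technical point is handling the orbifold (rather than manifold) locus of $\eta(\Gamma)\backslash\fH_n$ carefully — using the orbifold version of Gauss--Bonnet and checking that ``Seifert-like'' is exactly the hypothesis making $\Gamma\backslash\CX$ a genuine (orbifold) circle bundle over it, so that the Fubini step and the curvature integral are both legitimate. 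Once the normalization lemma is in place, the rest is assembling Fubini, Chern--Weil, and orbifold Gauss--Bonnet in that order.
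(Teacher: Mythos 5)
Your Fubini skeleton is exactly the paper's: the quotient $\Gamma\backslash \CX$ fibers over $\eta(\Gamma)\backslash\fH_n$, a fundamental domain for $\Gamma$ is a fundamental domain for $\Gamma\cap\ker\eta$ inside $\nu^{-1}(D)$ (Proposition \ref{prop:funddom}), and Goetz's product-measure lemma (Lemma \ref{lem:productmeasure}, together with Lemma \ref{lem:descendact}) factors the invariant volume as the fiber covolume times a base integral. The ``in particular'' clause also goes through essentially as you say, once one checks (as the paper does in a short lemma) that for $\Gamma$ arising from $\Psptnr$ one has $\Gamma\cap\ker\eta=\theta(Z(\CG))$, which Convention \ref{conv:volr} declares to have covolume $1$.

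Where you go astray is the second step. You propose to identify the base integral with $\pm\chi(\eta(\Gamma))$ by relating the central extension class in $H^2_c(\Psptnr;\BR)$ to the Euler class of $T\fH_n$ via Chern--Weil, with the Grassmannian normal-bundle computation fixing the normalization. This is not what the paper does, and as stated it cannot work for $n\geq 2$: the extension class (equivalently the invariant K\"ahler/symplectic class) lives in degree $2$, while the Euler form of $T\fH_n$ is a top-degree form on a space of real dimension $n(n+1)$; these are different objects, and no curvature $2$-form of the $\BR$-bundle can serve as the volume form on the base. The paper sidesteps the entire normalization problem by fiat: Convention \ref{conv:voleu} simply \emph{defines} the base volume form to be $\pm\mathrm{Eu}$, the Pfaffian/Gauss--Bonnet integrand of an invariant Riemannian metric on $\fH_n$, so that $\int_{\eta(\Gamma)\backslash\fH_n}\pm\mathrm{Eu}=\pm\chi(\eta(\Gamma)\backslash\fH_n)=\pm\chi(\eta(\Gamma))$ is immediate from Chern--Gauss--Bonnet and the Eilenberg--MacLane property. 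No orbifold version is needed, since Seifert-like subgroups are by definition torsion-free with free cocompact actions, so the base is a genuine closed manifold. The Grassmannian prototype plays no role in the volume formula, and the ``normalization lemma'' you flag as the real content of the theorem does not exist in the paper --- that content has been absorbed into the choice of convention for the invariant volume form.
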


Examples of Siegel--Seifert subgroups are constructed in Section \ref{sec:6} by showing that 
for any cocompact torsion-free lattice $L$ in $\Psptnr$, 
the group $\Gamma=L^\uparrow$ arising from $L$  is Siegel--Seifert (Proposition \ref{prop:siegelseifertarisingfrom}).

Since  $\eta(\Gamma)\backslash\fH_n$ is an Eilenberg--Maclane space, we have
$\chi(\eta(\Gamma)\backslash\fH_n)=\chi(\eta(\Gamma))$.  
Here we are using the Euler characteristic of torsion-free groups; for the definition of Euler characteristic (with values in $\BQ$) of a group with torsion, see Chapter IX in \cite{gtm87}. 
It follows that for Siegel--Seifert subgroups $\Gamma_i$ arising from $\Psptnr$, the volumes of $\Gamma_i\backslash \CX$ are rationally related.

In addition, we show the following    result with the help of \cite{dlsw}. This is Corollary  \ref{cor:oddevenrigid}, proved in Section \ref{sec:7}. 

\begin{thm}[Corollary of \cite{dlsw}]
  For any closed oriented smooth manifold $M$ (of applicable dimension), the volume of representations
$$\operatorname{vol}_{\CG\times_{Z(\CG)}\CH\CR/\CH}:\mathrm{Rep}(\pi_1(M),\CG\times_{Z(\CG)}\CH\CR/\CH)\to\mathbb{R}$$
is constant on every path-connected component of the representation space $\mathrm{Rep}(\pi_1(M),\CG\times_{Z(\CG)}\CH\CR/\CH)$.
\label{thm:rigid}
\end{thm}

Theorem \ref{thm:vol} generalizes the classical volume formula for $\sltrt$-geometry  \cite[Corollary~4.7.3 and its proof]{thurston}: For any discrete group $\Gamma$ of isometries of  $X=\sltrt$, if $p(\Gamma)$ is a discrete subgroup of $\mathrm{Isom}(\BH^2)$, we have 
\[
\mathrm{vol}(X/\Gamma)=\mathrm{area}(\mathbb{H}^{2}/p(\Gamma))\operatorname{length}(\mathbb{R}/(\Gamma\cap\ker p)). 
\]
Here $p$ is given in the following exact sequence \cite{scott}: 
\[
0\to\mathbb{R}\to\mathrm{Isom}\left(\sltrt\right)\xrightarrow{p}\mathrm{Isom}(\BH^2)\to 0. 
\]

This exact sequence for  $\mathrm{Isom}\left(\sltrt\right)$ and a similar exact sequence for $\widetilde{\mathrm{SL}(2,\mathbb{R})}\times_\BZ \BR$ are generalized to our case in the following proposition. 

Indeed, there is a group isomorphic to $ \mathbb{R}$, which embeds in $ \CG\times_{Z(\CG)}\CH\CR/\CH$ as a central subgroup, 
making the following exact sequence hold: 

\begin{prop}
There is an exact sequence of groups
\[
0\to 
\BR \to \CG\times_{Z(\CG)}\CH\CR/\CH \stackrel{\eta}{\to} \CG/Z(\CG) \to 0, 
\]
where 
$\CG/Z(\CG)\cong \Psptnr$. 
\label{prop:exactseq}
\end{prop}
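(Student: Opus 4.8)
The plan is to read the exact sequence directly off the construction of $\gxzghrh$ as a central pushout, so that the only real content is in identifying the three groups and checking that the pushout is well defined. I would first record that $\CG/Z(\CG)\cong\Psptnr$. Since $\CG=\Sptnrt$ is the universal cover of $\Sptnr$, there is a central extension $1\to\pi_1(\Sptnr)\to\CG\xrightarrow{p}\Sptnr\to1$ with $\pi_1(\Sptnr)\cong\BZ$; and because $\Psptnr=\Sptnr/\{\pm I\}$ has trivial centre, every lift of $-I$ is central in $\CG$ (for such a lift $x$ the map $y\mapsto xyx^{-1}y^{-1}$ takes values in the discrete group $\ker p$, is continuous, and vanishes at $y=1$, hence vanishes identically). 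Thus $Z(\CG)=p^{-1}(\{\pm I\})$ and $\CG/Z(\CG)\cong\Sptnr/\{\pm I\}=\Psptnr$ (the precise isomorphism type of $Z(\CG)$, which depends on the parity of $n$, is irrelevant below).

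Next I would recall the data of the construction and build $\eta$. In $\CG$ the subgroup $\CH=\Sun$ is the maximal compact subgroup, $\CR\cong\BR$ is a one-parameter subgroup with $\CH\cap\CR=\{1\}$ and $\CH\CR=\Unt$ the lifted maximal compact of $\Sptnr$, and $\CH$ is normal in $\CH\CR$ with $\hrh\cong\BR$; concretely $\hrh$ is the image of the homomorphism $\widetilde{\det}\colon\Unt\to\BR$, whose kernel is the lifted $\Sun$, and the quotient map $\CH\CR\to\hrh$ restricts to an isomorphism on $\CR$. Since $Z(\CG)\subset\Unt=\CH\CR$ there is a homomorphism $\iota_0\colon Z(\CG)\to\hrh$, and by construction $\gxzghrh=(\CG\times\hrh)/\Delta$ with $\Delta=\{(z,\iota_0(z)):z\in Z(\CG)\}$; as $Z(\CG)$ is central in $\CG$ and $\hrh$ is abelian, $\Delta$ is a closed central subgroup and $\gxzghrh$ is a Lie group. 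I would then define $\eta$ by $\eta([(g,h)])=gZ(\CG)$: this is well defined because $\Delta$ maps into $Z(\CG)$ under the first projection, it is plainly a continuous homomorphism, and it is surjective because $g\mapsto[(g,1)]$ followed by $\eta$ is the projection $\CG\to\CG/Z(\CG)$.

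For the kernel: if $\eta([(g,h)])=1$ then $g\in Z(\CG)$, and multiplying the representative $(g,h)$ by $(g,\iota_0(g))^{-1}\in\Delta$ shows $[(g,h)]=[(1,h\,\iota_0(g)^{-1})]$, so $\ker\eta=\{[(1,h)]:h\in\hrh\}$. The map $h\mapsto[(1,h)]$ is an injective homomorphism $\hrh\to\gxzghrh$ — if $[(1,h)]=1$ then $(1,h)=(z,\iota_0(z))$ for some $z\in Z(\CG)$, whence $z=1$ and $h=\iota_0(1)=1$ — and, crucially, this argument uses no injectivity of $\iota_0$, which is exactly what is needed since $\iota_0$ fails to be injective when $n$ is even. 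Finally $\ker\eta$ is central in $\gxzghrh$ because $[(g,h)][(1,h')][(g,h)]^{-1}=[(1,h')]$ by commutativity of $\hrh$. Assembling the three points yields the exact sequence $0\to\BR\to\gxzghrh\xrightarrow{\eta}\CG/Z(\CG)=\Psptnr\to0$ with a central copy of $\BR$, as claimed.

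I expect the main obstacle to be not this diagram chase but the structural facts it rests on, which are part of the construction of $\gxzghrh$: verifying that $\CH=\Sun$ is normal in $\CH\CR=\Unt$ with quotient $\hrh\cong\BR$ — equivalently that $\widetilde{\det}\colon\Unt\to\BR$ is a well-defined surjective homomorphism with kernel precisely the lifted $\Sun$ — and identifying $Z(\CG)$ with $\ker(\Sptnrt\to\Psptnr)$, the case $n$ even requiring some care because $\Sun$ then contains the order-two central element coming from $-I$. Once these are in hand the exact sequence and the centrality of the embedded $\BR$ follow formally.
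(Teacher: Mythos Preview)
Your proposal is correct and follows essentially the same idea as the paper's proof: both identify $\ker\eta$ with the image of $\{\mathrm{id}\}\times\hrh$ and compute the quotient as $\CG/Z(\CG)\cong\Psptnr$. The only difference is packaging: the paper compresses your element-by-element kernel computation into a single application of the third isomorphism theorem, observing that the subgroup of $\CG\times\hrh$ generated by $\{\mathrm{id}\}\times\hrh$ and $(i,\iota^{-1})Z(\CG)$ is exactly $Z(\CG)\times\hrh$, whence $(\gxzghrh)/\theta(\{\mathrm{id}\}\times\hrh)\cong(\CG\times\hrh)/(Z(\CG)\times\hrh)\cong\CG/Z(\CG)$. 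Your explicit verification that $h\mapsto[(1,h)]$ is injective (without assuming $\iota_0$ injective) and that the image is central are useful details the paper leaves implicit; the structural inputs you flag in your last paragraph are indeed established separately in the paper (Lemma~\ref{lem:center} and Lemma-Definition~\ref{lemdef:ghr}).
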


For any subgroup $\Gamma \subset  \CG\times_{Z(\CG)}\CH\CR/\CH$, there is an exact sequence of groups 
\[
0\to \Gamma\cap\ker \eta   \to \Gamma   \to \eta(\Gamma)\to 0. 
\]

Similarly, for   a discrete group $\Gamma'$ of isometries of $\sltrt$ acting freely and
with quotient $M$, the foliation of $\sltrt$ descends to a foliation of $M$. There is also an induced exact sequence
\[
0\to \Gamma'\cap\mathbb{R} \to \Gamma'\to p(\Gamma')\to 0. 
\]
As $\Gamma'$ is discrete, $\Gamma'\cap\mathbb{R}$ is infinite cyclic or trivial. If $\Gamma'\cap\mathbb{R}$ is infinite cyclic, we have that the foliation of $M$ is a Seifert fibration, and in particular a foliation into circles, fibering over a $2$-dimensional orbifold. This motivates the following definition of Siegel--Seifert subgroups, the groups in consideration:

\begin{defn}
A subgroup $\Gamma \subset  \CG\times_{Z(\CG)}\CH\CR/\CH$ is \textbf{Siegel--Seifert} if

(i) $\Gamma \subset  \CG\times_{Z(\CG)}\CH\CR/\CH$ is discrete and torsion-free, and the action $\Gamma\curvearrowright 
\Sptnrt/\Sun=\CX$ is cocompact and properly discontinuous, 

(ii) $\eta(\Gamma) \subset \CG/Z(\CG) $ is discrete and torsion-free, and the action $\eta(\Gamma)\curvearrowright \Sptnrt/\Unt =\fH_n$ is cocompact and properly discontinuous, and

(iii) $\Gamma\cap\ker\eta$ is infinite cyclic.

The corresponding manifold $\Gamma\backslash \mathcal{X}$ is called a \textbf{Siegel--Seifert} manifold. 
Here  $\eta:  \CG\times_{Z(\CG)}\CH\CR/\CH \to \CG/Z(\CG)\cong \Psptnr$ is given in Proposition \ref{prop:exactseq}. 
\label{def:siegelseifert}
\end{defn}

For a Siegel--Seifert manifold $\Gamma\backslash \mathcal{X}$, as $\Gamma\cap\ker\eta$ is infinite cyclic, the foliation on $\mathcal{X}$ into lines descends to a foliation into circles on $\Gamma\backslash \mathcal{X}$, fibering over the base manifold $\eta(\Gamma)\backslash\fH_n$. 

We've imposed strong requirements on the definition. In practice, however, subgroups that are not torsion-free or subgroups that are just cofinite can also be considered, as in the case of hyperbolic orbifolds and Seifert fibrations fibering over orbifolds.

It is important to acknowledge that there are numerous other generalizations of Seifert fibration that have been proposed in the literature. 
One of the most widely studied generalizations is the one mentioned in the first subsection, where a $4$-manifold is considered Seifert fibered if it is the total space of an orbifold bundle with general fiber a torus or Klein bottle over a $2$-orbifold \cite[Section~7.7]{hillman}. 
Other generalizations  include a generalization to Alexandrov $3$-spaces, which was first proposed in \cite{alexandrov1} and classified in \cite{alexandrov2}. Furthermore, originating from the study of positively curved manifolds, in \cite{taimanov} a  generalized Seifert fibration is defined to be a ``bundle with singular locus'', with   Eshchenburg $7$-manifolds \cite{eschenburg} (and also any principal $S^1$-bundle over a manifold) serving as a special case. 
Although these various definitions share a common terminology, they are conceptually distinct from the definition we have introduced, and the focus of this paper will remain on the latter. 

\subsection{A roadmap for the proof. }

In this paper we first discuss an explicit construction of such a  model based on $\spfr\curvearrowright \fH_2$, and then for general $n$. It turns out that this model is the homogeneous space given by a central extension 
 of $\widetilde{\mathrm{Sp}(4,\mathbb{R})}$, modulo its maximal compact subgroup. 
We use an   equivariant embedding from $\fH_2$ to a Grassmannian manifold $P_2$ taken from p.94 of \cite{hulek}. 
\textit{The} normal bundle, as a subbundle of $TP_2|_{\fH_2}$ of real rank $2$, is specified by two global sections and is proved to be invariant under the tangent map of $\spfr\curvearrowright P_2$. 
We then obtain an action 
\[
\spfr\curvearrowright \fH_2\times S^1, 
\]
which can be lifted to the universal covering spaces
\[
\spfrt\curvearrowright \fH_2\times \BR. 
\]
It is proved in Theorem \ref{thm:models} that the actions $\spfr\curvearrowright \fH_2\times S^1$ and $\spfrt\curvearrowright \fH_2\times \BR$ are both transitive and have compact stabilizer isomorphic to $\sutwo$.

Inspired by Theorem \ref{thm:models},    in Subsection \ref{subsec:3.4}, 
we perform a central extension by $\CH\CR/\CH\cong \BR$ on $\CG:=\Sptnrt$ to obtain a group $\gxzghrh$ (defined in Lemma-Definition \ref{lemdef:ghr} and Definition \ref{def:gxzghrh}) 
acting on the model space  $\CX=\Sptnrt/\Sun$ (Definition \ref{def:extendedact})
 realizing  $\CX$ as $\gxzghrh$ modulo its maximal compact subgroup (Lemma \ref{lem:maximalcompactmodel}). The following Proposition \ref{prop:extendedact}, summarizing this model,  is proved in Subsection \ref{subsec:3.4}:

\begin{prop}
The extended action 
\[
 \CG\times_{Z(\CG)}\CH\CR/\CH   \to    \mathrm{Diff}(\CX)
\]
 is transitive, and has compact stabilizer isomorphic to 
${\Un}/{\{\pm \id\}}$. 
\label{prop:extendedact}
\end{prop}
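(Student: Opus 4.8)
The plan is to get transitivity for free and then compute the point stabiliser; by transitivity it is enough to treat the base point $x_0 = e\Sun \in \CX$. Since $\CG = \Sptnrt$ already acts transitively on $\CX = \CG/\Sun$ by left translation, and the canonical homomorphism $\CG \to \gxzghrh$ intertwines this with the extended action (Definition \ref{def:extendedact}), the extended action is a fortiori transitive, so all the content is in the stabiliser. First I would record how the two factors act on $\CX$: the $\CG$-factor acts by left translation on $\CG/\Sun$, and the $\hrh$-factor acts by translation along the fibres of $\nu\colon\CX\to\fH_n$, which --- since $\Sun$ is normal in $\Unt$ --- is the right action of $\Unt/\Sun$ on $\CG/\Sun$ under the identification $\hrh \cong \Unt/\Sun$ built into the construction of the extension (Lemma-Definition \ref{lemdef:ghr}, Definition \ref{def:gxzghrh}). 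These two actions commute, and the amalgamating subgroup $\{(z,\iota(z)^{-1}) : z\in Z(\CG)\}$ acts trivially precisely because $\iota$ is the composite $Z(\CG)\hookrightarrow\Unt\twoheadrightarrow\Unt/\Sun\cong\hrh$ --- i.e. left translation by a central $z$ already equals fibre translation by $\iota(z)$ --- which is exactly what makes the extended action well defined.

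Writing $\bar{g} \in \Unt/\Sun$ for the image of $g \in \Unt$, a class $[(g,r)]$ fixes $x_0$ iff $g\widetilde{r} \in \Sun$ for a lift $\widetilde{r} \in \Unt$ of $r$, i.e. iff $g \in \Unt$ and $\bar{g} = r^{-1}$. Hence $\mathrm{Stab}_{x_0}$ is exactly the image of the homomorphism $\varphi\colon\Unt\to\gxzghrh$, $\varphi(g) = [(g,\bar{g}^{-1})]$ --- a homomorphism because $g\mapsto\bar{g}$ is and $\hrh$ is abelian. Its kernel consists of those $g$ for which $(g,\bar{g}^{-1})$ lies in the amalgamating subgroup, which by the description of $\iota$ forces $g \in Z(\CG)$ and imposes nothing more; so $\ker\varphi = Z(\CG)$ and $\mathrm{Stab}_{x_0} \cong \Unt/Z(\CG)$.

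It remains to identify $\Unt/Z(\CG)$ with $\Un/\{\pm\id\}$. Since $\Un\hookrightarrow\Sp(2n,\BR)$ is a homotopy equivalence, restricting the universal covering $p\colon\CG = \Sptnrt \to \Sp(2n,\BR)$ to $\Unt = p^{-1}(\Un)$ realises $\Unt$ as the universal cover of $\Un$ with deck group $\ker p = \pi_1(\Sp(2n,\BR))$; moreover $Z(\CG) = Z(\Sptnrt) = p^{-1}(Z(\Sp(2n,\BR))) \subset \Unt$ (the same input underlying $\CG/Z(\CG)\cong\Psptnr$ in Proposition \ref{prop:exactseq}). As $\ker p \subseteq Z(\CG)$, the third isomorphism theorem gives $\Unt/Z(\CG) \cong (\Unt/\ker p)/(Z(\CG)/\ker p) \cong \Un/\{\pm\id\}$, which is compact as a quotient of $\Un$. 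Together with transitivity this proves the proposition.

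The one genuinely delicate step is the bookkeeping in the middle: faithfully transcribing the extended action as ``left translation followed by fibre translation'', fixing the embedding $\iota$ and the identification $\hrh\cong\Unt/\Sun$ so that $\varphi$ is well defined, and resisting the temptation to treat $Z(\Sptnrt)$ as infinite cyclic --- for $n$ even it carries an extra element of order $2$ (namely $-\id$ viewed in $\Sun\subset\Unt$), so $\iota$ need not be injective. This does no harm here, since the argument computes $\Unt/Z(\CG)$ directly, but it is the sort of thing that breaks a careless proof; everything else is formal.
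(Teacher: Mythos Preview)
Your proof is correct and follows essentially the same approach as the paper: transitivity comes from the $\CG$-factor alone, and the stabiliser of the basepoint is identified as the image of a homomorphism $\CH\CR=\Unt\to\gxzghrh$ sending $g\mapsto[(g,\bar g^{-1})]$, with kernel $Z(\CG)$, giving $\Unt/Z(\CG)\cong\Un/\{\pm\id\}$ via the third isomorphism theorem. The paper's computation is written in the $(\CH,\CR)$ language (parametrising $g=hs$ with $h\in\CH$, $s\in\CR$) rather than your $\Unt/\Sun$ language, but the content is identical.
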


Proposition \ref{prop:exactseq} is proved in Subsection \ref{subsec:4.2}. We observe that 
the action of $\CG\times_{Z(\CG)}\CH\CR/\CH$ preserves the fibering structure of 
the trivial principal $\BR$-bundle $\nu: \CX \to  \fH_n  $, and 
the induced action of $\CG\times_{Z(\CG)}\CH\CR/\CH$ on the base space coincides with the usual action of $\Sptnr$ on $\Sptnr/\Un$
(Lemma \ref{lem:descendact}). 
Setting 
\[
\theta: \CG\times \CH\CR/\CH\to \CG\times_{Z(\CG)}\CH\CR/\CH
\]
to be the quotient map, 
the group $\CH\CR/\CH\cong \mathbb{R}$ embeds in $ \CG\times_{Z(\CG)}\CH\CR/\CH$ as a central subgroup via the map  $r\CH\mapsto (\mathrm{id}, r\CH)$, making the  exact sequence in Proposition \ref{prop:exactseq} hold.

The Theorem \ref{thm:vol} relies on 
Goetz's result \cite{goetz} summarized in Subsection \ref{subsec:5.2}, which  shows that the invariant measure in our model is given by a ``product measure'' (also defined in Subsection \ref{subsec:5.2}) of the base and the fiber. After fixing such an invariant volume form (Conventions \ref{conv:volr} and \ref{conv:voleu}), using the Chern--Gauss--Bonnet theorem \cite{cherngaussbonnet}, the proof of Theorem \ref{thm:vol} is finished in Subsection \ref{subsec:5.3}.

In Section \ref{sec:6}, we provide examples of Siegel--Seifert subgroups by showing that 
for any cocompact torsion-free lattice $L$ in $\Psptnr$, 
the group $\Gamma=L^\uparrow$ arising from $L$  is Siegel--Seifert (Proposition \ref{prop:siegelseifertarisingfrom}). 
Note that starting with any cocompact lattice in $\Psptnr$, by Selberg's Lemma (see \cite[Section~4.8]{morris}), we may replace it with a torsion-free subgroup of finite index which is again a cocompact lattice. 
Classification of (cocompact) lattices for some simple real Lie groups is summarized by  Audibert in \cite{splattices} (see also his thesis). This is briefly sketched in Subsection \ref{subsec:6.2}.

In Section \ref{sec:7}, we establish structure of representation spaces and also  results on the volume of representations associated to $\gxzghrh$, as corollaries to \cite{dlsw}.   The main results of this section are Corollaries \ref{cor:oddeveneuler} and \ref{cor:oddevenrigid}, and the proof is divided to two cases:
For $n$ odd, after fixing identifications $\CH\CR/\CH\to \BR$ and $Z_{\BR}:=Z(\CG)\otimes_{\BZ}\BR\to \BR$, our extended group $\gxzghrh$ and their extended group $\CG_\BR:=\CG\times_{Z(\CG)} Z_\BR$ are both identified with the same quotient of $\CG\times \BR$, and thus Proposition 5.1 and Theorem 7.1 in \cite{dlsw}  in this case directly apply. For $n$ even,  by considering $G=\Sptnrt/\{\pm\id\}$ instead, our construction and their construction coincide, and similar results follow. 
As a consequence of considerations on representation volume, we show in Proposition \ref{prop:seifertpropertyd} that any Siegel--Seifert manifold $\Gamma\backslash \mathcal{X}$ has Property D (Definition \ref{defn:propertyd}), in Subsection \ref{subsec:7.5}.

\subsection{Organization. }

Section \ref{sec:2} is an introduction to Seifert geometry with emphasis on the similarities between the Seifert geometry and our model. The construction of our model is done in Section \ref{sec:3}:  Subsections \ref{subsec:3.1}, \ref{subsec:3.2} and \ref{subsec:3.3} define the preliminary model $\spfrt\curvearrowright \fH_2\times \BR$ and prove the identification $\spfrt\curvearrowright \fH_2\times \BR\cong \spfrt\curvearrowright \spfrt/\sutwo$. Starting from Subsection \ref{subsec:3.4} we turn to the case for general $n$. Subsection \ref{subsec:3.4} performs the central extension obtaining our model  $\CG\times_{Z(\CG)}\CH\CR/\CH   \to    \mathrm{Diff}(\CX)=\mathrm{Diff}(\Sptnrt/\Sun) $. 

Section \ref{sec:4} is devoted to the fibering structure and the induced exact sequence. As a consequence of the exact sequence, Subsection  \ref{subsec:4.3} discusses $\CG\times_{Z(\CG)}\CH\CR/\CH$ as a topological group extension.  The considered Siegel--Seifert subgroups are defined in Subsection \ref{subsec:5.1}. Work by Goetz on the product measure is summarized in Subsection \ref{subsec:5.2}, which is an essential tool in the proof of the volume formula in Subsection \ref{subsec:5.3}.

In Section \ref{sec:6}, we provide examples of Siegel--Seifert subgroups by showing that 
for any cocompact torsion-free lattice $L$ in $\Psptnr$, 
the group $\Gamma=L^\uparrow$ arising from $L$  is Siegel--Seifert (Proposition \ref{prop:siegelseifertarisingfrom}). This is discussed in Subsection \ref{subsec:6.1}. 
Classification of (uniform) lattices in $\Sptnr$ up to (wide) commensurability,  summarized by  Audibert in \cite{splattices},  is briefly sketched in Subsection \ref{subsec:6.2}.

In Section \ref{sec:7},  we exhibit structure of representation spaces and also  results on the volume of representations associated to $\gxzghrh$, as corollaries to \cite{dlsw}. 
The main results of this section are Corollaries \ref{cor:oddeveneuler} and \ref{cor:oddevenrigid}. 
We begin with a brief introduction to volume of representations. The two cases,  where $n$ is odd or even,  are dealt with separately in Subsections \ref{subsec:7.3} and \ref{subsec:7.4}. 
In Subsection \ref{subsec:7.5}, we show in Proposition \ref{prop:seifertpropertyd} that any Siegel--Seifert manifold $\Gamma\backslash \mathcal{X}$ has Property D (Definition \ref{defn:propertyd}).

\textbf{Acknowledgments. }The author thanks Prof. Yi Liu for proposing the problem and providing suggestions throughout the research, Prof. Wen-Wei Li for his inspiring slides on Euler--\Poincare measure, and Yaoping Xie for suggestions on the product measure. The author thanks the anonymous referee for helpful suggestions.

\textbf{Funding. }This research did not receive any specific grant from funding agencies in the public, commercial, or not-for-profit sectors.

\section{Preliminaries}
\label{sec:2}

The Seifert geometry is one of Thurston's eight geometries, and is an $\BR$-bundle over the hyperbolic plane $\BH^2$. The identity component of its  isometry group is $\widetilde{\mathrm{SL}(2,\mathbb{R})}\times_\BZ \BR$, acting on the  space $\widetilde{\mathrm{SL}(2,\mathbb{R})}$ as follows: the subgroup $\widetilde{\mathrm{SL}(2,\mathbb{R})}$ acts as left multiplication, while the $\BR$ part acts as multiplication on the right by a subgroup of $\widetilde{\mathrm{SL}(2,\mathbb{R})}$ isomorphic to $\BR$. 

We briefly review the definition of Siegel upper half-space, the construction of the Seifert geometry, and results about Seifert geometry motivating our constructions and theorems. 
This section is based on \cite{thurston},  \cite{scott},  \cite{martelli} and  \cite{volofseirep}. 

\subsection{The Siegel upper half-space $\fH_n$ and the stabilizer of $iI_n\in \fH_n$}

The Siegel upper half-space \cite{siegelsympgeo} $\mathrm{Sp}(2n,\BR)\curvearrowright \fH_n$  is a generalization of the hyperbolic plane $\mathrm{SL}(2,\mathbb{R}) \curvearrowright \BH^2$. 
As a homogeneous space, the Siegel upper half-space is just $\mathrm{Sp}(2n,\BR)$ modulo its maximal compact subgroup, which is an embedded $\mathrm{U}(n)$. A concrete construction goes as follows: Let $I_n$ be the $n\times n$ identity matrix. 
The symplectic group $\mathrm{Sp}(2n,\BR)$ is defined to be 
$$\mathrm{Sp}(2n,\BR)=\{M\in M_{2n\times2n}(\BR) | M^{\mathrm{T}}\Omega M=\Omega\},\text{ where } \Omega:= \begin{pmatrix}
0 & I_n \\
-I_n & 0 
\end{pmatrix}.   $$
The \textbf{Siegel upper half-space} $\fH_n$ is defined to be the set of $n\times n$ symmetric matrices over $\BC$ such that the imaginary part is positive definite. Writing $Z\in \fH_n$ as $Z=X+iY$ for real matrices $X,Y$, this means that $Y$ is positive definite. 
The action $\mathrm{Sp}(2n,\BR)\curvearrowright \fH_n$ is given as follows: for $M= \begin{pmatrix}
A & B \\
C & D 
\end{pmatrix} \in  \mathrm{Sp}(2n,\BR)$ decomposed into $n\times n$ matrices $A,B,C,D$, the action is 
\[
\fH_n\to \fH_n, \quad Z\mapsto(AZ+B)\cdot(CZ+D)^{-1}, 
\]
which reduces to the usual action $\mathrm{SL}(2,\mathbb{R})\cong \mathrm{Sp}(2,\mathbb{R}) \curvearrowright \BH^2$ when setting $n=1$.

In the following, we show that the stabilizer $\Stab_{\fH_n}(iI_n)\subset \mathrm{Sp}(2n,\BR)$ of $iI_n\in \fH_n$ is 
$\mathrm{Sp}(2n,\BR)\cap \mathrm{O}(2n,\BR)$, which is isomorphic to $U(n)$ via a specified isomorphism. 

The complex vector space $\BC^n$ can be considered as a real vector space $\BR^{2n}$ via the map $\BR^{2n}\to \BC^n$, $(x_1,\dots, x_n,y_1,\dots, y_n)\mapsto (x_1+iy_1, \dots, x_n+iy_n)$. Using this identification, the set $M_n(\BC)$ is considered as a subset of $ M_{2n}(\BR)$, and multiplication by $i\in \BC$ becomes the real $2n \times 2n$  matrix 
\[
J:=\left(
\begin{array}{cc}
 0 & -I_n \\
I_n & 0 \\
\end{array}
\right)
\in M_{2n}(\BR).  
\]
A real  $2n \times 2n$  matrix $E\in M_{2n}(\BR)$ is $\BC$-linear iff it commutes with $J$, iff $E$ is of the form
\[
E=\left(
\begin{array}{cc}
 A & -B \\
 B & A \\
\end{array}
\right)
\]
where $A, B\in M_{n}(\BR)$. Such a matrix   $E\in M_{n}(\BC) \subset M_{2n}(\BR)$   is identified with  $A+iB\in  M_{n}(\BC)$. 

\begin{lem}[Proof taken from website \cite{web}]
Using the inclusion $M_n(\BC) \subset M_{2n}(\BR)$ above, $\mathrm{U}(n)$ is identified with  $\mathrm{Sp}(2n,\BR)\cap \mathrm{O}(2n,\BR)$. 
\end{lem}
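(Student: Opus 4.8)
The plan is to realize all three groups as the stabilizers of compatible tensors on $\BR^{2n}\cong\BC^n$ and to exploit the relation $h=g-i\omega$ between them. Write $g$ for the standard Euclidean inner product on $\BR^{2n}$, $\omega$ for the bilinear form with Gram matrix $\Omega$ (so $\omega(u,v)=u^{\mathrm{T}}\Omega v$, and one checks $\omega(u,v)=g(Ju,v)$ since $J=-\Omega=\Omega^{\mathrm{T}}$), and $h$ for the standard Hermitian form on $\BC^n$, normalized so that $h(z,w)=g(z,w)-i\,\omega(z,w)$ under the identification $\BR^{2n}\cong\BC^n$ fixed above. By definition $\mathrm{O}(2n,\BR)$ is the stabilizer of $g$, the condition $M^{\mathrm{T}}\Omega M=\Omega$ says exactly that $\mathrm{Sp}(2n,\BR)$ is the stabilizer of $\omega$, and $\mathrm{U}(n)\subset M_n(\BC)$ is the stabilizer of $h$.

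First I would check the inclusion $\mathrm{U}(n)\subseteq\mathrm{Sp}(2n,\BR)\cap\mathrm{O}(2n,\BR)$. An element $E\in\mathrm{U}(n)$ is by construction $\BC$-linear, hence lies in $M_n(\BC)\subset M_{2n}(\BR)$ (i.e.\ commutes with $J$), and it preserves $h$; taking the real and imaginary parts of $h(Ez,Ew)=h(z,w)$ yields $g(Ez,Ew)=g(z,w)$ and $\omega(Ez,Ew)=\omega(z,w)$, so $E\in\mathrm{O}(2n,\BR)\cap\mathrm{Sp}(2n,\BR)$.

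The reverse inclusion is the substantive point: membership in $\mathrm{Sp}(2n,\BR)\cap\mathrm{O}(2n,\BR)$ \emph{forces} $\BC$-linearity, which is what lets us reassemble $h=g-i\omega$. Given $E\in\mathrm{Sp}(2n,\BR)\cap\mathrm{O}(2n,\BR)$, from $E\in\mathrm{O}(2n,\BR)$ we get $E^{-1}=E^{\mathrm{T}}$, and from $E^{\mathrm{T}}\Omega E=\Omega$ we get $\Omega E=(E^{\mathrm{T}})^{-1}\Omega=E\Omega$; hence $E$ commutes with $\Omega$, so with $J=-\Omega$, i.e.\ $E\in M_n(\BC)$. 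Then $h(Ez,Ew)=g(Ez,Ew)-i\,\omega(Ez,Ew)=g(z,w)-i\,\omega(z,w)=h(z,w)$, so $E$ preserves $h$ and $E\in\mathrm{U}(n)$. Combining the two inclusions gives the asserted identification.

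I expect the only real obstacle to be keeping the sign conventions straight in $h=g-i\omega$ relative to the explicit matrices $\Omega$ and $J$ fixed earlier; once those are pinned down the argument is a few lines of linear algebra. As a cross-check (and an alternative route not using $h=g-i\omega$), after the commutation argument places $E$ in $M_n(\BC)$ one writes $E=\matrixx{A}{-B}{B}{A}$ with $A+iB\in M_n(\BC)$ and verifies directly that $E^{\mathrm{T}}E=I_{2n}$ is equivalent to $A^{\mathrm{T}}A+B^{\mathrm{T}}B=I_n$ together with $A^{\mathrm{T}}B=B^{\mathrm{T}}A$, which is precisely the condition $(A+iB)^{*}(A+iB)=I_n$, i.e.\ $A+iB\in\mathrm{U}(n)$.
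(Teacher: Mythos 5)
Your proof is correct, and the decisive step of the converse is the same as the paper's: from $E^{\mathrm T}E=I_{2n}$ and $E^{\mathrm T}\Omega E=\Omega$ you deduce that $E$ commutes with $J$, hence is $\BC$-linear (the paper writes this as $JE=EE^{\mathrm T}JE=EJ$, you as $\Omega E=(E^{\mathrm T})^{-1}\Omega=E\Omega$; these are the same computation). Where you diverge is in how the remaining, routine verifications are packaged: the paper works entirely with explicit $2\times 2$ block matrices, checking $\mathrm{U}(n)\subset\mathrm{Sp}(2n,\BR)$ and $\mathrm{U}(n)\subset\mathrm O(2n,\BR)$ by multiplying out $E^{\mathrm T}\Omega E$ and $E^{\mathrm T}E$, and finishing the converse by expanding $E^{\mathrm T}E=I_{2n}$ to read off the unitarity conditions $A^{\mathrm T}A+B^{\mathrm T}B=I_n$, $A^{\mathrm T}B=B^{\mathrm T}A$. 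You instead invoke the ``two out of three'' identity $h=g-i\omega$, so that preservation of the Hermitian form splits into preservation of its real and imaginary parts and both inclusions follow without computation. Your route is more conceptual and generalizes immediately (it is the standard compatibility argument for the triple $\mathrm O$, $\mathrm{Sp}$, $\mathrm U$), at the cost of having to pin down the sign convention in $h=g\pm i\omega$ relative to the chosen $\Omega$ and $J$ — which you correctly flag, and which does not affect the conclusion since $\mathrm U(n)$ stabilizes $h$ under either convention. The paper's route is longer but self-contained and produces the explicit unitarity equations that it reuses in the subsequent stabilizer lemma. Your closing cross-check via $E^{\mathrm T}E=I_{2n}$ is exactly the paper's final step.
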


\begin{proof}

The matrix $A+iB\in  M_{n}(\BC)$ ($A, B\in M_{n}(\BR)$) is unitary iff 
\[
A^TA+B^TB=I_n \text{ and } A^TB=B^TA. 
\]

We have $\mathrm{U}(n) \subset \mathrm{Sp}(2n,\BR) $ because 
\[
\left(
\begin{array}{cc}
 A^T & B^T \\
 -B^T & A^T \\
\end{array}
\right)
\begin{pmatrix}
0 & I_n \\
-I_n & 0 
\end{pmatrix}
\left(
\begin{array}{cc}
 A & -B \\
 B & A \\
\end{array}
\right)
=
\begin{pmatrix}
0 & I_n \\
-I_n & 0 
\end{pmatrix}. 
\]

Similarly, $\mathrm{U}(n) \subset\mathrm{O}(2n,\BR)$ because 
\[
\left(
\begin{array}{cc}
 A^T & B^T \\
 -B^T & A^T \\
\end{array}
\right)
\left(
\begin{array}{cc}
 A & -B \\
 B & A \\
\end{array}
\right)
=
\begin{pmatrix}
I_n & 0 \\
0 & I_n 
\end{pmatrix}. 
\]

Conversely, take $E \in \mathrm{Sp}(2n,\BR)\cap \mathrm{O}(2n,\BR)$. We have $E^TE=I_{2n}$ and $E^T \Omega E=\Omega$ (or equivalently, $E^T J E=J$). It follows that
$
JE=EE^T J E=EJ$, hence $E$ is $\BC$-linear, having the form  $E=\left(
\begin{array}{cc}
 A & -B \\
 B & A \\
\end{array}
\right)$. Expanding $E^TE=I_{2n}$, we see that $A+iB$ is unitary. 
\end{proof}


\begin{lem}
The stabilizer $\Stab_{\fH_n}(iI_n)\subset \mathrm{Sp}(2n,\BR)$ of $iI_n\in \fH_n$ is the group $\mathrm{U}(n)$, embedded in $\mathrm{Sp}(2n,\BR)$ as the subgroup $\mathrm{Sp}(2n,\BR)\cap \mathrm{O}(2n,\BR)$: 
\[
\mathrm{U}(n)\stackrel{\sim}{\to}\mathrm{Sp}(2n,\BR)\cap \mathrm{O}(2n,\BR) \subset   \mathrm{Sp}(2n,\BR), 
\]
\[
A+iB\mapsto  \left(
\begin{array}{cc}
 A & -B \\
 B & A \\
\end{array}
\right), \quad A, B\in M_{n}(\BR). 
\]
\label{lem:stabilizerembed}
\end{lem}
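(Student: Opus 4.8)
The plan is to reduce the stabilizer condition to the $\BC$-linearity condition that was already analyzed in the preceding lemma. First I would take an arbitrary $M = \begin{pmatrix} A & B \\ C & D\end{pmatrix} \in \mathrm{Sp}(2n,\BR)$, decomposed into $n\times n$ blocks. Since the action formula $Z \mapsto (AZ+B)(CZ+D)^{-1}$ is well-defined on all of $\fH_n$, the factor $iC + D$ is invertible, and $M$ fixes $iI_n$ exactly when $(iA+B)(iC+D)^{-1} = iI_n$, i.e. $iA + B = iI_n(iC+D) = -C + iD$. Matching the real and imaginary parts of these $n\times n$ complex matrices gives $B = -C$ and $A = D$; equivalently, $M = \begin{pmatrix} A & -C \\ C & A\end{pmatrix}$ has the $\BC$-linear block form, i.e. $M$ commutes with $J$. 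Since this computation is an equivalence, conversely every such $M$ fixes $iI_n$, so $\Stab_{\fH_n}(iI_n) = \{M \in \mathrm{Sp}(2n,\BR) : MJ = JM\}$.

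It then remains to identify this set with $\mathrm{Sp}(2n,\BR) \cap \mathrm{O}(2n,\BR)$, and this is contained in the proof of the preceding lemma: if $M$ is symplectic and commutes with $J = -\Omega$, then $M$ commutes with $\Omega$, so $M^T\Omega M = \Omega$ becomes $M^TM = I_{2n}$, whence $M \in \mathrm{O}(2n,\BR)$; the reverse inclusion $\mathrm{Sp}(2n,\BR)\cap\mathrm{O}(2n,\BR) \subset \{M : MJ = JM\}$ is precisely the ``conversely'' part of that proof. Combining with the isomorphism $\mathrm{U}(n) \xrightarrow{\sim} \mathrm{Sp}(2n,\BR)\cap\mathrm{O}(2n,\BR)$ established there, under which $A+iB \in \mathrm{U}(n)$ corresponds to $\begin{pmatrix} A & -B \\ B & A\end{pmatrix}$, yields $\Stab_{\fH_n}(iI_n) = \mathrm{Sp}(2n,\BR)\cap\mathrm{O}(2n,\BR) \cong \mathrm{U}(n)$ together with the explicit embedding displayed in the statement.

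I do not expect a genuine obstacle here: the argument is a short direct computation glued onto the preceding lemma. The only points needing care are bookkeeping ones — correctly separating real and imaginary parts of products of $n\times n$ complex matrices, keeping the block sign conventions consistent with the embedding $\mathrm{U}(n) \hookrightarrow \mathrm{Sp}(2n,\BR)$ already fixed (so that the map in the statement is literally the one obtained, not merely a conjugate of it), and recording that the invertibility of $iC+D$ comes for free from the well-definedness of the Siegel action rather than being something to re-prove.
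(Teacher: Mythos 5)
Your proof is correct and follows essentially the same route as the paper's: both derive the block form $\left(\begin{smallmatrix} A & -B \\ B & A\end{smallmatrix}\right)$ by equating real and imaginary parts in the fixed-point equation, and both then identify such symplectic matrices with $\mathrm{U}(n)$. The only (harmless) difference is that the paper re-expands $E^T\Omega E=\Omega$ in block form to read off the unitarity relations directly, whereas you route through the observation that a symplectic matrix commuting with $J$ is automatically orthogonal and then cite the preceding lemma's identification $\mathrm{Sp}(2n,\BR)\cap\mathrm{O}(2n,\BR)\cong\mathrm{U}(n)$, which is a slightly more economical organization of the same content.
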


\begin{proof}
Assume that $E= \left(
\begin{array}{cc}
 A & C \\
 B & D \\
\end{array}
\right) 
\in
\mathrm{Sp}(2n,\BR)$
fixes $iI_n$. This implies 
$
(Ai+C)(Bi+D)^{-1}=iI_n$, 
$
(Ai+C)=i(Bi+D)$, 
and hence $A=D$, $C=-B$, $E=\left(
\begin{array}{cc}
 A & -B \\
 B & A \\
\end{array}
\right)\in
\mathrm{Sp}(2n,\BR)$, which  means
\[
\left(
\begin{array}{cc}
 A^T & B^T \\
 -B^T & A^T \\
\end{array}
\right)
\begin{pmatrix}
0 & I_n \\
-I_n & 0 
\end{pmatrix}
\left(
\begin{array}{cc}
 A & -B \\
 B & A \\
\end{array}
\right)
=
\begin{pmatrix}
0 & I_n \\
-I_n & 0 
\end{pmatrix}, 
\]
\[
\left(
\begin{array}{cc}
-B^TA+A^TB  & B^TB+A^TA \\
-A^TA-B^TB & A^TB-B^TA \\
\end{array}
\right)
=
\begin{pmatrix}
0 & I_n \\
-I_n & 0 
\end{pmatrix}. 
\]
It follows that the matrix $A+iB\in  M_{n}(\BC)$  is unitary. 

Conversely, given $A+iB\in \mathrm{U}(n)$ ($A, B\in M_{n}(\BR)$), we have $A^TA+B^TB=I_n$ and $ A^TB=B^TA$, and thus the equalities above hold. Then $\left(
\begin{array}{cc}
 A & -B \\
 B & A \\
\end{array}
\right)
\in
\mathrm{Sp}(2n,\BR)$ and 
its action sends $iI_n\in \fH_n$ to 
$
(Ai-B)(Bi+A)^{-1}=i(A+iB)(Bi+A)^{-1}=iI_n$. 
\end{proof}

The lemma above is used in Lemma \ref{lem:phi} in Subsection \ref{subsec:3.3}.

\subsection{The model space of Seifert geometry}

Consider the universal covering group $\sltrt$ of $\sltr$, acting on the left of the contractible space $\sltrt$ by left multiplication. Let $k:\BR\to\sltr$ be the homomorphism 
\[
k(r)=\left(\begin{array}{cc}\cos(\pi r)&-\sin(\pi r)\\\sin(\pi r)&\cos(\pi r)\end{array}\right)
\]
and let $\tilde{k}:\BR\to \sltrt$ be its lift. 
The center $Z(\sltrt)$ of $\sltrt$ is  the image of $\BZ$ under $\tilde{k}$.

Define $\sltrtxzr$ to be the central extension
\[
\sltrtxzr:=\sltrt\times \BR /\{(\widetilde{k}(n),-n):n\in\mathbb{Z}\}
\]
Note that in \cite{volofseirep} the group operation of $\BR$ is written additively, while in our model  we will write it multiplicatively. The image of $(g,s)\in \sltrt\times \BR$ in  $\sltrtxzr$ is denoted by $g[s]$, and we have 
\[
g[s]g^\prime[s^\prime]=(gg^\prime)[s+s^\prime], 
\]
\[
g\widetilde{k}(n)[s]=g[s+n], \forall n\in\mathbb{Z}. 
\]

The natural maps  $\sltrt\to \sltrtxzr, g\mapsto g[0]$ and $\mathbb{R}\to \sltrtxzr , s\mapsto \id[s]$  are embeddings. 
There is a short exact sequence
\[
0\to\mathbb{R}\to \sltrtxzr \to\mathrm{PSL}(2,\mathbb{R})\to 0. 
\]

Define the extended action $\sltrtxzr\curvearrowright \sltrt$ by 
\[
g[r]\cdot h=gh\widetilde{k}(r)
\]
for all $g[ r] \in \sltrtxzr$ and $h\in \sltrt$.  
This action is differentiable, transitive, and proper. It is proved in \cite[pp.~464-465]{scott}, that (after endowing $\sltrt$ with a suitable Riemannian metric) this action realizes $\sltrtxzr$ as the identity component of $\Isom(\sltrt)$, and $\Isom(\sltrt)$ has only  two components.

In a similar fashion with $\sltr$ replaced by $\spfr$ and more generally $\Sptnr$, we obtain first the left multiplication of $\spfrt$ on its corresponding homogeneous space $ \spfrt/\sutwo$ (Theorem \ref{thm:models}). The center of $\Sptnrt$ is described in Lemma \ref{lem:center}. In Subsection \ref{subsec:3.4}, we construct a similar central extension $\gxzghrh$ of $\CG:=\Sptnrt$, acting on the  model space $ \Sptnrt/\Sun$, and it turns out that this action is the action of $\gxzghrh$ on $\gxzghrh$ modulo its maximal compact subgroup (Proposition \ref{prop:extendedact}, Lemma \ref{lem:maximalcompactmodel}). It follows that there exists an invariant Riemannian metric. However, in this paper we are not going to take one and discuss the full isometry group. It might be an interesting question to determine what the full isometry group is.

\subsection{Seifert fiber spaces and a volume formula}

Seifert fiber spaces (also known as  Seifert manifolds) form six of Thurston's eight geometries. The $\sltrt$-geometry (also known as Seifert geometry), is one of the six geometries. A detailed treatment of the geometries and Seifert manifolds is given in \cite{martelli}. 

Our definition of Siegel--Seifert subgroups (Definition \ref{def:siegelseifert}) is motivated by \cite[Theorem~4.15]{scott} and its proof. Let $\Gamma$ be a discrete group of isometries of $\sltrt$ acting freely and
with quotient $M$. Then the foliation of $\sltrt$ descends to a foliation of $M$. 
We have exact sequences
\[
0\to\mathbb{R}\to\mathrm{Isom}\left(\sltrt\right)\xrightarrow{p}\mathrm{Isom}(\BH^2)\to 0, 
\]
\[
0\to \Gamma\cap\mathbb{R} \to \Gamma\to p(\Gamma)\to 0. 
\]
As $\Gamma$ is discrete, $\Gamma\cap\mathbb{R}$ is infinite cyclic or trivial. If $\Gamma\cap\mathbb{R}$ is infinite cyclic, we have that the foliation of $M$ is a Seifert fibration, and in particular a foliation into circles, fibering over a $2$-dimensional orbifold. 
Similar exact sequences hold for $ \CG\times_{Z(\CG)}\CH\CR/\CH$ (Proposition \ref{prop:exactseq}). 


A volume formula for $\BH^2\times \BR$-geometry and $\sltrt$-geometry is given in the proof of   \cite[Corollary~4.7.3]{thurston}. For any discrete group $\Gamma$ of isometries of $X=\BH^2\times \BR$ or $X=\sltrt$, if $p(\Gamma)$ is a discrete subgroup of $\mathrm{Isom}(\BH^2)$, we have 
\[
\mathrm{vol}(X/\Gamma)=\mathrm{area}(\mathbb{H}^{2}/p(\Gamma))\operatorname{length}(\mathbb{R}/(\Gamma\cap\ker p)). 
\]
This motivates our Theorem \ref{thm:vol} on the volume formula for ``Siegel--Seifert'' manifolds. 
It also suggests that the invariant measure in our case could be a ``product measure'', the meaning of which is made clear in Subsections \ref{subsec:5.2} and \ref{subsec:5.3}.

\section{Construction of the model}
\label{sec:3}

Let $\spfr\curvearrowright \fH_2$ be the Siegel upper half-space on which $\spfr$ acts. 
In this section we construct an action $\spfr\curvearrowright \fH_2\times S^1$ from the normal bundle of an equivariant embedding of $\fH_2$ into a Grassmannian manifold. This model lifts to $\spfrt\curvearrowright \fH_2\times \BR$, which turns out to be the homogeneous space $\spfrt\curvearrowright \spfrt/\sutwo$, where $\sutwo$ is a specified maximal compact subgroup in $\spfrt$. 
Finally, in Subsection \ref{subsec:3.4}, we consider general $n$ and  perform a central extension on $\Sptnrt$ to obtain a group $\gxzghrh$ acting on $\CX=\Sptnrt/\Sun$, realizing the model space $\CX=\Sptnrt/\Sun$ as $\gxzghrh$ modulo its maximal compact subgroup.

%
%
%
%
%
%
%
%
%
%

\subsection{Embedding into a Grassmannian manifold}
\label{subsec:3.1}

The following  equivariant embedding from $\fH_n$ to a Grassmannian manifold is taken from   \cite[p.~94]{hulek}. 
Let $P_n$ be the set of all $(2n \times n)$ complex matrices of rank $n$ divided out by the equivalence
relation 
$$\left.\left(\begin{array}{c}M_1\\M_2\end{array}\right.\right)\sim\left(\begin{array}{c}M_1M\\M_2M\end{array}\right)\text{ for any }M\in\mathrm{GL}(n,\mathbb{C}).$$
Define an action  $\mathrm{Sp}(2n,\BR)\curvearrowright P_n$, 
\[
\left(\begin{array}{cc}A&B\\C&D\end{array}\right)\left[\begin{array}{c}M_{1}\\M_{2}\end{array}\right]=\left[\begin{array}{c}AM_{1}+BM_{2}\\CM_{1}+DM_{2}\end{array}\right], 
\]
where $[\quad]$ denotes equivalence classes in $P$. 

Consider the embedding 
\[
Z\in \fH_n\mapsto   \left[\begin{array}{c}Z\\I_{n}\end{array}\right]
\]
from $\fH_n$ into (a coordinate chart of) $P_n$. Since
\[
\left(\begin{array}{cc}A&B\\C&D\end{array}\right)\left[\begin{array}{c}Z\\I_{n}\end{array}\right]=\left[\begin{array}{c}AZ+B\\CZ+D\end{array}\right]=\left[\begin{array}{c}(AZ+B)(CZ+D)^{-1}\\I_{n}\end{array}\right], 
\]
the subspace $\fH_n\subset P_n$ is invariant under the action of $\mathrm{Sp}(2n,\BR)$, and $\mathrm{Sp}(2n,\BR)\curvearrowright P_n$ restricts to the usual action $\mathrm{Sp}(2n,\BR)\curvearrowright \fH_n$.

\subsection{The normal bundle}
\label{subsec:3.2}

Viewing in the coordinate chart of $P_n$ where the lower $n\times n$ matrix is invertible, any element can be uniquely written in the form 
$\left[\begin{array}{c}Z\\I_{n}\end{array}\right]$ where $Z$ can be any $n\times n$ complex matrix. The subset $\fH_n\subset P_n$ in the chart  is an open subset of a closed subset (indeed, an open subset of the hyperplane of symmetric matrices), and hence we can talk about the normal bundle.

Throughout this subsection, working in this chart, we consider only the case $n=2$ and identify $\left[\begin{array}{c}Z\\I_{2}\end{array}\right]$ with $Z\in M_{2\times 2}(\BC)$ for simplicity.

We explicitly define \textit{the} normal bundle, as a subbundle of $TP_2|_{\fH_2}$ of real rank $2$, by two global sections 
\[
vec_1=\begin{pmatrix}
0 & 1 \\
-1 & 0 
\end{pmatrix}, 
vec_2=\begin{pmatrix}
0 & i \\
-i & 0 
\end{pmatrix} =i\cdot vec_1, 
\]
considered as global sections of the tangent bundle of $M_{2\times 2}(\BC)$, restricted to $\fH_2$. 

\begin{prop}
The normal bundle is invariant under the tangent map of $\spfr\curvearrowright P_2$. 
\end{prop}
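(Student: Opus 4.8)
\emph{Proof plan.} The plan is to argue entirely in the coordinate chart where the lower $n\times n$ block is invertible, which identifies this chart of $P_2$ with $M_{2\times 2}(\BC)$, turns the $\spfr$-action into the fractional-linear action $\phi_g\colon Z\mapsto (AZ+B)(CZ+D)^{-1}$ for $g=\matrixx{A}{B}{C}{D}\in\spfr$, and restricts to the usual action on $\fH_2$. Trivializing $TP_2|_{\fH_2}=\fH_2\times M_{2\times2}(\BC)$, the subbundle $T\fH_2$ has the constant fiber $\Sym_2(\BC)$ of symmetric matrices, while the normal bundle cut out by the sections $vec_1,vec_2$ has the constant fiber $N_0:=\{W\in M_{2\times2}(\BC): W^{\mathrm T}=-W\}=\BR\,vec_1\oplus\BR\,vec_2$, a real complement of $\Sym_2(\BC)$ in $M_{2\times2}(\BC)$. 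Since $\phi_g$ preserves $\fH_2$ (Subsection~\ref{subsec:3.1}), its tangent map restricts to a bundle map of $TP_2|_{\fH_2}$, and the assertion is exactly that $d(\phi_g)_Z(N_0)\subseteq N_0$ for every $Z\in\fH_2$; applying this to $g^{-1}$ upgrades $\subseteq$ to an equality.

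\emph{Computing the differential.} Writing $\phi_g$ as the matrix product $(AZ+B)\cdot(CZ+D)^{-1}$ and using $d(M^{-1})=-M^{-1}(dM)M^{-1}$ together with the product rule, one obtains
\[
d(\phi_g)_Z(W)=\bigl(A-(AZ+B)(CZ+D)^{-1}C\bigr)\,W\,(CZ+D)^{-1}.
\]
The crux is the identity
\[
A-(AZ+B)(CZ+D)^{-1}C=\bigl((CZ+D)^{-1}\bigr)^{\mathrm T}\qquad(Z=Z^{\mathrm T}),
\]
which I would prove by multiplying on the left by $(CZ+D)^{\mathrm T}$ and invoking the block relations defining $\spfr$, namely $A^{\mathrm T}C=C^{\mathrm T}A$, $B^{\mathrm T}D=D^{\mathrm T}B$, $A^{\mathrm T}D-C^{\mathrm T}B=I$ (and its transpose $D^{\mathrm T}A-B^{\mathrm T}C=I$), together with $Z=Z^{\mathrm T}$. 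Concretely: first check that $(AZ+B)^{\mathrm T}(CZ+D)$ is symmetric (the $Z$-quadratic term cancels by $A^{\mathrm T}C=C^{\mathrm T}A$, the constant term by $B^{\mathrm T}D=D^{\mathrm T}B$, and the linear term by the two relations of the form $(\cdot)=I$); then check that $(CZ+D)^{\mathrm T}A-(AZ+B)^{\mathrm T}C=I$ (again the $Z$-term cancels by $A^{\mathrm T}C=C^{\mathrm T}A$, and the constant term is $D^{\mathrm T}A-B^{\mathrm T}C=I$). Combining, $(CZ+D)^{\mathrm T}\bigl(A-(AZ+B)(CZ+D)^{-1}C\bigr)=(CZ+D)^{\mathrm T}A-(AZ+B)^{\mathrm T}(CZ+D)(CZ+D)^{-1}C=(CZ+D)^{\mathrm T}A-(AZ+B)^{\mathrm T}C=I$.

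\emph{Conclusion.} Granting the identity, $d(\phi_g)_Z(W)=\bigl((CZ+D)^{-1}\bigr)^{\mathrm T}W(CZ+D)^{-1}$, whence $\bigl(d(\phi_g)_Z(W)\bigr)^{\mathrm T}=\bigl((CZ+D)^{-1}\bigr)^{\mathrm T}W^{\mathrm T}(CZ+D)^{-1}$, so $W^{\mathrm T}=-W$ forces $d(\phi_g)_Z(W)^{\mathrm T}=-d(\phi_g)_Z(W)$. Thus $d(\phi_g)_Z$ sends skew-symmetric matrices to skew-symmetric matrices, i.e.\ $N_0$ into $N_0$, which is precisely the invariance claimed. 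The only genuine obstacle is organizing the matrix algebra in the displayed identity; everything else is formal. (The same computation works verbatim for every $n$, showing that the constant subbundle of $TP_n|_{\fH_n}$ with fiber the skew-symmetric complex $n\times n$ matrices is $\Sptnr$-invariant — it is, up to restriction, the normal bundle of the Lagrangian Grassmannian inside the Grassmannian — but only the case $n=2$ is needed here.)
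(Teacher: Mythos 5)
Your proof is correct, but it takes a genuinely different route from the paper's. The paper reduces to the generating set $D(n)\cup N(n)\cup\{\Omega\}$ of $\mathrm{Sp}(2n,\BR)$ (citing Folland) and verifies by explicit symbolic computation (Mathematica) that the tangent map of each generator sends $vec_1$ and $vec_2$ into $\mathrm{span}\{vec_1,vec_2\}$; there is no closed formula for the differential of a general group element. You instead derive the uniform identity $d(\phi_g)_Z(W)=\bigl((CZ+D)^{-1}\bigr)^{\mathrm T}W(CZ+D)^{-1}$ for all $g\in\spfr$ at once, from which invariance of the skew-symmetric fiber $\BR\,vec_1\oplus\BR\,vec_2$ is immediate since conjugation-type maps $W\mapsto S^{\mathrm T}WS$ preserve skew-symmetry. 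I checked your key identity: the symplectic block relations $A^{\mathrm T}C=C^{\mathrm T}A$, $B^{\mathrm T}D=D^{\mathrm T}B$, $A^{\mathrm T}D-C^{\mathrm T}B=I$, $D^{\mathrm T}A-B^{\mathrm T}C=I$ do give the symmetry of $(AZ+B)^{\mathrm T}(CZ+D)$ and the relation $(CZ+D)^{\mathrm T}A-(AZ+B)^{\mathrm T}C=I$, and these combine exactly as you say. Your approach buys a computation-free, coordinate-uniform argument that works verbatim for every $n$ and identifies the normal bundle conceptually as the skew-symmetric complement of $T\fH_n$ inside $TP_n|_{\fH_n}$; the paper's approach buys a shorter write-up at the cost of a case analysis over generators and reliance on computer algebra. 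One presentational caveat: your parenthetical claim that this subbundle is ``the normal bundle of the Lagrangian Grassmannian inside the Grassmannian'' is a side remark not needed for the proof and not established in the paper, so if you retain it you should flag it as an aside rather than as a proven fact.
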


\begin{proof}
It is proved in \cite[Proposition~4.10]{follandsp}  that
 $\operatorname{Sp}(2n,\mathbb{R})$ is generated by $D(n)\cup N(n)\cup\{\Omega\}$, where
$$D(n)=\left\{\begin{pmatrix}A&0\\0&(A^T)^{-1}\end{pmatrix} \bigg|\: A\in\operatorname{GL}(n,\mathbb{R})\right\}, $$
$$N(n)=\left\{\begin{pmatrix}I_n&B\\0&I_n\end{pmatrix}\bigg|\:B \text{ is symmetric}\right\}. $$

It suffices to verify for these generators that the tangent vectors $vec_1,vec_2$ are mapped into $\mathrm{span}\{vec_1,vec_2\}$, which is done by brute force using Mathematica \cite{mathematica}.

%
%

For arbitrary 
\[
X+iY=
\left(
\begin{array}{cc}
 x_1 & x_2 \\
 x_2 & x_3 \\
\end{array}
\right)+i \left(
\begin{array}{cc}
 y_1 & y_2 \\
 y_2 & y_3 \\
\end{array}
\right), 
\]
take the curve
\[
s\mapsto 
s \left(
\begin{array}{cc}
 0 & 1 \\
 -1 & 0 \\
\end{array}
\right)+\left(
\begin{array}{cc}
 x_1 & x_2 \\
 x_2 & x_3 \\
\end{array}
\right)+i \left(
\begin{array}{cc}
 y_1 & y_2 \\
 y_2 & y_3 \\
\end{array}
\right)
\]
representing the tangent vector $vec_1$. Now apply the action, take derivative with respect to $s$, and set $s=0$. 

For the action of $\Omega$, the image of $vec_1$ is 
\[
\left(
\begin{array}{cc}
 0 & \frac{1}{x_1 x_3+i x_1 y_3-x_2^2-2 i x_2 y_2+i x_3 y_1-y_1 y_3+y_2^2} \\
 -\frac{1}{x_1 x_3+i x_1 y_3-x_2^2-2 i x_2 y_2+i x_3 y_1-y_1 y_3+y_2^2} & 0 \\
\end{array}
\right)
\]

For the action of $M=\begin{pmatrix}
A & B \\
C & D 
\end{pmatrix}$ where $(A,B,C,D)$ is 

\[
\left(
\left(
\begin{array}{cc}
 a_1 & a_2 \\
 a_3 & a_4 \\
\end{array}
\right),\left(
\begin{array}{cc}
 0 & 0 \\
 0 & 0 \\
\end{array}
\right),\left(
\begin{array}{cc}
 0 & 0 \\
 0 & 0 \\
\end{array}
\right),\left(
\begin{array}{cc}
 \frac{a_4}{a_1 a_4-a_2 a_3} & -\frac{a_3}{a_1 a_4-a_2 a_3} \\
 -\frac{a_2}{a_1 a_4-a_2 a_3} & \frac{a_1}{a_1 a_4-a_2 a_3} \\
\end{array}
\right)
\right), 
\]
 the image of $vec_1$ is 
\[
\left(
\begin{array}{cc}
 0 & a_1 a_4-a_2 a_3 \\
 a_2 a_3-a_1 a_4 & 0 \\
\end{array}
\right). 
\]

For the action of $M=\begin{pmatrix}
A & B \\
C & D 
\end{pmatrix}$ where $(A,B,C,D)$ is 

\[
\left(\left(
\begin{array}{cc}
 1 & 0 \\
 0 & 1 \\
\end{array}
\right),\left(
\begin{array}{cc}
 b_1 & b_2 \\
 b_3 & b_4 \\
\end{array}
\right),\left(
\begin{array}{cc}
 0 & 0 \\
 0 & 0 \\
\end{array}
\right),\left(
\begin{array}{cc}
 1 & 0 \\
 0 & 1 \\
\end{array}
\right)\right), 
\]
 the image of $vec_1$ is 
\[
\left(
\begin{array}{cc}
 0 & 1 \\
 -1 & 0 \\
\end{array}
\right). 
\]

In any of the three cases, the image of $vec_2$ is $i$ times the image of $vec_1$. 
\end{proof}

%
%

\subsection{Arriving at $\spfrt\curvearrowright\spfrt/\sutwo$}
\label{subsec:3.3}

Observe that we obtained an action of $\spfr$ on the trivial normal bundle $\fH_2\times \BC$. When restricted to a single fiber $\BC$, the action $\BC\to \BC$ becomes multiplication by a number in $\BC^*\cong S^1\times \BR$. For $\spfr\curvearrowright\fH_2\times \BC$, we now forget the radial part and record the rotations only, producing an action 
\[
\spfr\curvearrowright \fH_2\times S^1. 
\]

This action can be lifted to the universal covering spaces
\[
\spfrt\curvearrowright \fH_2\times \BR. 
\]

For the details of lifting an action, see for example Section 9 of  Chapter I  in \cite{bredontg}. The rest of this subsection is devoted to the proof of
\begin{thm}
The actions $\spfr\curvearrowright \fH_2\times S^1$ and $\spfrt\curvearrowright \fH_2\times \BR$ are both transitive and have compact stabilizer isomorphic to $\sutwo$. Thus after fixing basepoints we have identifications
\[
\spfr\curvearrowright \fH_2\times S^1\cong \spfr\curvearrowright \spfr/\sutwo, 
\]
\[
\spfrt\curvearrowright \fH_2\times \BR\cong \spfrt\curvearrowright \spfrt/\sutwo. 
\]
\label{thm:models}
\end{thm}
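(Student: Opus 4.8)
The plan is to dispose of the $S^1$-action first and then transfer everything to the $\spfrt$-statement by passing to universal covers. Since $\spfr$ acts transitively on $\fH_2$ (the classical transitivity of $\Sptnr$ on $\fH_n$, already visible in the $P_n$-picture of Subsection~\ref{subsec:3.1}) and the projection $\fH_2\times S^1\to\fH_2$ is equivariant, transitivity of $\spfr\curvearrowright\fH_2\times S^1$ will follow once I show that $\Stab_{\fH_2}(iI_2)$ — which by Lemma~\ref{lem:stabilizerembed} is the embedded $\utwo$ — acts transitively on the fibre $\{iI_2\}\times S^1$. That fibre is the unit circle of the complex line $L:=\BC\cdot vec_1\subset T_{iI_2}M_{2\times2}(\BC)$ (using $vec_2=i\cdot vec_1$, and the fact that the $\spfr$-action on $P_2$ is holomorphic, so its tangent map is $\BC$-linear). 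Consequently each $g\in\utwo$, fixing $iI_2$, restricts on $L$ to multiplication by a scalar $\chi(g)\in\BC^{*}$, and the chain rule makes $\chi\colon\utwo\to\BC^{*}$ a homomorphism. Since $|\chi(g)|=1$ will be automatic, the passage from $L$ to its unit circle is vacuous on this fibre, so everything reduces to computing $\chi$.

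\emph{Computing $\chi$.} Write $g\in\utwo$ as $g\leftrightarrow U=A+iB$ with $A+iB$ unitary, as in Lemma~\ref{lem:stabilizerembed}. In the chart the action is $Z\mapsto(AZ-B)(BZ+A)^{-1}$, with value again $iI_2$ at $Z=iI_2$ (indeed $(iA-B)(iB+A)^{-1}=i(A+iB)(A+iB)^{-1}$), and a direct differentiation shows that its differential at $iI_2$ sends a tangent matrix $V$ to $(A-iB)\,V\,(A+iB)^{-1}=\overline{U}\,V\,\overline{U}^{\,T}$, using $U^{-1}=\overline{U}^{\,T}$. Applying this to $V=vec_1$ and the elementary $2\times2$ identity $M\!\left(\begin{smallmatrix}0&1\\-1&0\end{smallmatrix}\right)\!M^{T}=(\det M)\!\left(\begin{smallmatrix}0&1\\-1&0\end{smallmatrix}\right)$ gives $dg(vec_1)=\overline{\det U}\cdot vec_1$, so $\chi(U)=\overline{\det U}=(\det U)^{-1}$, which has modulus $1$. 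Hence $\utwo$ acts on $\{iI_2\}\times S^1$ through the surjection $(\det)^{-1}\colon\utwo\to S^1$, transitively and with kernel exactly the embedded $\sutwo$ (the action on any other fibre being conjugate to this one). Therefore $\spfr\curvearrowright\fH_2\times S^1$ is transitive and $\Stab(iI_2,1)=\sutwo$, which gives $\spfr\curvearrowright\fH_2\times S^1\cong\spfr\curvearrowright\spfr/\sutwo$.

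\emph{The lifted action.} Because $\sutwo$ is simply connected, the inclusion $\sutwo\hookrightarrow\spfr$ lifts through $\spfrt\to\spfr$ to an injective homomorphism with closed image $\widetilde{\sutwo}\subset\spfrt$, isomorphic to $\sutwo$; since $\spfrt$ is simply connected and $\widetilde{\sutwo}$ connected, $\spfrt/\widetilde{\sutwo}$ is simply connected, hence is the universal cover of $\spfr/\sutwo$. On the other side, $\fH_2\times\BR$ is the universal cover of $\fH_2\times S^1$ (as $\fH_2$ is contractible). Lifting the $\spfr$-equivariant diffeomorphism $\fH_2\times S^1\cong\spfr/\sutwo$ through these universal covers and invoking uniqueness of the lifted action (as in the reference to Bredon used above), one obtains a $\spfrt$-equivariant diffeomorphism $\fH_2\times\BR\cong\spfrt/\widetilde{\sutwo}$ carrying the lifted action to left multiplication; thus $\spfrt\curvearrowright\fH_2\times\BR$ is transitive with compact stabiliser $\widetilde{\sutwo}\cong\sutwo$. (If one prefers to argue directly: by equivariance of the lift and surjectivity of $\spfrt\to\spfr$, every $\spfrt$-orbit in $\fH_2\times\BR$ maps onto the single $\spfr$-orbit $\fH_2\times S^1$, hence has dimension $\ge\dim(\fH_2\times S^1)=7=\dim(\fH_2\times\BR)$, hence is open, and connectedness of $\fH_2\times\BR$ leaves exactly one orbit.)

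\emph{Main obstacle.} The only genuinely computational step is the evaluation of $\chi$ in the second paragraph; it is short once the $2\times2$ determinant identity is brought to bear, and the crucial point is that $\chi=(\det)^{\pm1}$ is surjective with kernel precisely $\sutwo$, which is what simultaneously delivers transitivity and pins the stabiliser down to $\sutwo$ rather than a larger or disconnected group. In the passage to the cover, the matching subtlety is to ensure the stabiliser of the lifted action acquires no extra components from $\ker(\spfrt\to\spfr)$; this is exactly what the simple-connectedness of $\sutwo$ guarantees, as it makes $\widetilde{\sutwo}$ a genuine copy of $\sutwo$ meeting the central kernel only in the identity.
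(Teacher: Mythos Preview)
Your proof is correct and reaches the same conclusion as the paper, but it takes a somewhat different and, in places, cleaner route.

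For the $S^1$-statement, the paper proves $\phi(M)=(\det M)^{-1}$ in two steps: first a direct matrix check that the embedded $\sutwo$ acts trivially on the fibre over $iI_2$ (so $\phi$ factors through $\utwo/\sutwo\cong S^1$), and then a separate computation with the one-parameter subgroup $\mathrm{diag}(e^{i\theta},1)$ to pin down the degree as $-1$. Your single computation of the full differential $V\mapsto\overline U\,V\,\overline U^{\,T}$ together with the $2\times2$ identity $M\bigl(\begin{smallmatrix}0&1\\-1&0\end{smallmatrix}\bigr)M^{T}=(\det M)\bigl(\begin{smallmatrix}0&1\\-1&0\end{smallmatrix}\bigr)$ accomplishes both at once and is more conceptual; it also makes the modulus-$1$ claim immediate.

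For the lifted action, the paper works by hand with paths: it identifies $\pi^{-1}(\sutwo)\subset\spfrt$ as a $\BZ$-indexed family of spheres $\{S^3_n\}$, proves that $S^3_n$ is exactly the set sending $\tilde q_0$ to $\tilde q_{-n}$, and deduces that the stabiliser is the copy $S^3_0$. Transitivity is then argued by an explicit path-concatenation. Your argument via simple-connectedness of $\sutwo$ (so that its lift $\widetilde{\sutwo}\subset\spfrt$ is a genuine copy meeting $\ker\pi$ trivially), together with uniqueness of the lifted action and the observation that $\spfrt/\widetilde{\sutwo}$ and $\fH_2\times\BR$ are both the universal cover of $\spfr/\sutwo$, is more abstract but shorter, and your alternative ``open orbits'' argument for transitivity is also valid. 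The paper's concrete description of the copies $S^3_n$ is, however, reused later (e.g.\ in the commutative diagram at the start of Section~\ref{sec:4}), so its extra explicitness is not wasted.
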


\subsubsection{Preparations before lifting. }
First of all, we recall some basic facts about $\spfr$ and $\utwo$. In the Siegel upper half-space $\spfr\curvearrowright \fH_2$, the stabilizer $\Stab_{\fH_2}(iI_2)$ of $iI_2\in \fH_2$ is isomorphic to $\utwo$, and is the maximal compact subgroup of $\spfr$. Topologically, $\spfr$ is just $\utwo$ times a vector space. 

There is an exact sequence
\[
0\to \sutwo\to \utwo \to S^1\to 0
\]
where $\det:\utwo \to S^1$ is just the determinant, and is also a trivial principal $\sutwo$-bundle with a global section given by 
\[
S^1 \to \utwo, 
e^{i\theta}\mapsto  \begin{pmatrix}
e^{i\theta} & 0 \\
0 & 1 
\end{pmatrix}  . 
\]
This global section induces an isomorphism on fundamental groups. 

Topologically, the total space $\utwo$ of the fiber bundle is just homeomorphic to a product $S^3\times S^1$. Such a homeomorphism identifies the subgroup $\sutwo$ with the subspace $S^3\times \{*\}\subset S^3\times S^1$, and identifies the image of the global section with the subspace $\{*\} \times S^1\subset S^3\times S^1$.

The group $\Stab_{\fH_2}(iI_2)\cong \utwo$ fixes the basepoint $iI_2$ in terms of the action $\spfr\curvearrowright \fH_2$, and hence sends the fiber $S^1$ at $iI_2$ to itself in terms of the action $\spfr\curvearrowright \fH_2\times S^1$. This defines a group  homomorphism 
\[
\phi:\utwo\cong \Stab_{\fH_2}(iI_2)\to S^1
\]
where the $S^1$ on the right hand side acts on the fiber $S^1$. 

\begin{lem}
$\phi(M)=(\det (M))^{-1}$. 
\label{lem:phi}
\end{lem}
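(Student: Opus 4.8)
The plan is to compute the action of an element $M \in \Stab_{\fH_2}(iI_2) \cong \utwo$ on the normal fiber over $iI_2$ directly, using the explicit description of the normal bundle by the sections $vec_1, vec_2$ from Subsection \ref{subsec:3.2}. Writing $M = \begin{pmatrix} A & -B \\ B & A \end{pmatrix}$ with $A+iB \in \utwo$ (Lemma \ref{lem:stabilizerembed}), the tangent map of $M$ at the point $\left[\begin{smallmatrix} iI_2 \\ I_2 \end{smallmatrix}\right]$ carries the fiber $\mathrm{span}_\BR\{vec_1, vec_2\} = \mathbb{C}\cdot vec_1$ to itself, and multiplies $vec_1$ by some $\lambda(M) \in \mathbb{C}^*$; the claim is that $\lambda(M) = \det(A+iB)^{-1}$, whose argument is $\phi(M)^{-1}$ after we pass from $\mathbb{C}^*$ to $S^1$ by recording only rotations.

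First I would set up the computation concretely: take the curve $s \mapsto iI_2 + s\cdot vec_1$ in the coordinate chart where elements of $P_2$ are written $\left[\begin{smallmatrix} Z \\ I_2 \end{smallmatrix}\right]$, apply $M$ via $Z \mapsto (AZ + (-B))(BZ + A)^{-1}$ (using the block form fixing $iI_2$), differentiate at $s=0$, and read off the resulting tangent vector. Since $Z(s) = iI_2 + s\,vec_1$, we have $AZ(s) - B = i(BZ(s)+A) + s\,A\,vec_1 - \text{(correction)}$; more cleanly, using that $M$ fixes $iI_2$ one gets $AZ(s)-B = (iA + sA\,vec_1 - B + \dots)$, and the derivative of the Möbius-type action at a fixed point $Z_0 = iI_2$ is the linear map $V \mapsto (A - Z_0'\cdot\text{stuff})\,V\,(BZ_0+A)^{-1}$; the standard formula gives $\frac{d}{ds}\big|_{0}(AZ(s)-B)(BZ(s)+A)^{-1} = \big(A - (AZ_0-B)(BZ_0+A)^{-1}B\big)\,vec_1\,(BZ_0+A)^{-1}$. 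Substituting $Z_0 = iI_2$ and $(AZ_0-B)(BZ_0+A)^{-1} = iI_2$ collapses the prefactor to $(A - iB) = \overline{A+iB} \cdot$ (something), and since $A+iB$ is unitary, $(BZ_0+A)^{-1} = (iB+A)^{-1} = (A+iB)^{-1}$. The upshot is that the tangent map sends $vec_1 \mapsto (A-iB)\,vec_1\,(A+iB)^{-1}$.

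The key remaining step is the algebraic identity $(A-iB)\,vec_1\,(A+iB)^{-1} = \det(A+iB)^{-1}\,vec_1$. Here $vec_1 = \begin{pmatrix} 0 & 1 \\ -1 & 0\end{pmatrix}$ is (a scalar multiple of) the symplectic form on $\mathbb{C}^2$, and for any $2\times 2$ matrix $N$ one has $N^T \begin{pmatrix} 0 & 1 \\ -1 & 0\end{pmatrix} N = \det(N)\begin{pmatrix} 0 & 1 \\ -1 & 0\end{pmatrix}$. Applying this with $N = (A+iB)^{-1}$ gives $\big((A+iB)^{-1}\big)^T vec_1 (A+iB)^{-1} = \det(A+iB)^{-1} vec_1$; it then remains to observe that $(A-iB) = \overline{(A+iB)}$ equals $\big((A+iB)^{-1}\big)^T$ precisely because $A+iB$ is unitary (i.e. $(A+iB)^{-1} = (A+iB)^* = (\overline{A+iB})^T$, so $\big((A+iB)^{-1}\big)^T = \overline{A+iB} = A-iB$). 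Combining, $vec_1 \mapsto \det(A+iB)^{-1} vec_1$, so on the fiber the action is multiplication by $\det(A+iB)^{-1} = \det(M)^{-1}$ (noting $\det$ as a $\utwo$-valued function agrees with $\det(A+iB)$ under the identification). Passing to the $S^1$-quotient (rotations only) yields $\phi(M) = \det(M)^{-1}$.

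I expect the main obstacle to be bookkeeping the derivative of the matrix Möbius action at a fixed point — getting the prefactor $(A - (AZ_0-B)(BZ_0+A)^{-1}B)$ right and confirming it simplifies to $A - iB$ — together with the transpose-versus-inverse identity for unitary matrices; once those are pinned down, the $\det$ identity for the symplectic form does the rest. One should also double-check the sign/orientation convention relating the $\mathbb{C}^*$-action to the $S^1$ factor so that "multiplication by $\det(M)^{-1}$" indeed records as the rotation $\phi(M) = (\det M)^{-1}$ rather than its inverse.
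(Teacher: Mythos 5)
Your proposal is correct, and it takes a genuinely different route from the paper. The paper first shows by an explicit matrix computation that $\sutwo\subset\ker\phi$, so that $\phi$ factors as $\utwo\to\utwo/\sutwo\cong S^1\xrightarrow{z\mapsto z^k}S^1$, and then determines $k=-1$ by computing the action of the single one-parameter family $\mathrm{diag}(e^{i\theta},1)$ on $vec_1$. You instead compute the differential of the matrix M\"obius action at the fixed point $iI_2$ for an arbitrary $M=\left(\begin{smallmatrix}A&-B\\B&A\end{smallmatrix}\right)\in\Stab_{\fH_2}(iI_2)$ in closed form, $V\mapsto(A-iB)\,V\,(A+iB)^{-1}$ (your prefactor $A-(AZ_0-B)(BZ_0+A)^{-1}B$ does collapse to $A-iB$ since $M$ fixes $iI_2$), and then use the $2\times2$ identity $N^{T}vec_1N=\det(N)\,vec_1$ together with unitarity, $\bigl((A+iB)^{-1}\bigr)^{T}=\overline{A+iB}=A-iB$, to get $vec_1\mapsto\det(A+iB)^{-1}vec_1$ in one stroke. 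This is more conceptual: it proves the formula for all of $\utwo$ at once rather than pinning down an integer, it makes transparent why the normal line is preserved and why the multiplier lies on $S^1$ (so the ``forget the radial part'' step is vacuous on the stabilizer), and it identifies the determinant as arising from the symplectic-form identity for $vec_1$; the paper's version buys a shorter write-up at the cost of two brute-force $4\times4$ computations. Your result is consistent with the paper's special case ($\det$ of $\mathrm{diag}(e^{i\theta},1)$ is $e^{i\theta}$ and the paper finds the multiplier $e^{-i\theta}$). One small slip to fix: mid-proof you say the argument of $\lambda(M)$ is ``$\phi(M)^{-1}$,'' which contradicts your (correct) final conclusion $\phi(M)=\lambda(M)=\det(M)^{-1}$; the definition of $\phi$ is that $M$ acts on the fiber circle by $\phi(M)$, so the argument of $\lambda(M)$ is $\phi(M)$ itself.
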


\begin{proof}
Recall 
\[
\mathrm{SU}(2)=\left\{\begin{pmatrix}\alpha&-\overline{\beta}\\\beta&\overline{\alpha}\end{pmatrix}\bigg|\:\alpha,\beta\in\mathbb{C},\left|\alpha\right|^2+\left|\beta\right|^2=1\right\}. 
\]
The embedding (see Lemma \ref{lem:stabilizerembed})
\[
\sutwo\to \utwo \cong \Stab_{\fH_2}(iI_2)\to \spfr
\]
 is given by
\[
\begin{pmatrix}
\alpha&-\overline{\beta}\\
\beta&\overline{\alpha}
\end{pmatrix}
=
\begin{pmatrix}
a_1+i a_2&-b_1+i b_2\\
b_1+i b_2&a_1-i a_2
\end{pmatrix}
\mapsto
 \begin{pmatrix}
a_1 & -b_1 & -a_2 & -b_2 \\
b_1 & a_1 & -b_2 & a_2 \\
a_2 & b_2 & a_1 & -b_1 \\
b_2 & -a_2 & b_1 & a_1 
\end{pmatrix}  , 
\]
where $\alpha=a_1+i a_2, \beta=b_1+i b_2$ and   $a_1, a_2, b_1, b_2\in \BR$. 
Direct computation shows that the action of this matrix fixes  $iI_2$ and is the identity on the fiber $S^1$ over $iI_2$. It follows that $\sutwo \subset \ker(\phi)$, and the homomorphism $\phi$ factors as 
\[
\phi:\utwo\to \utwo/\sutwo\cong S^1\to S^1. 
\]
The homomorphism $S^1\to S^1$ above is just $z\mapsto z^k$ for some $k$, and we need to determine which integer it is. 

Observe also that  (see Lemma \ref{lem:stabilizerembed})
\[
\begin{pmatrix}
e^{i\theta} & 0 \\
0 & 1 
\end{pmatrix}
=
\begin{pmatrix}
\cos\theta+i \sin\theta & 0 \\
0 & 1 
\end{pmatrix}
\in \utwo
\mapsto
 \begin{pmatrix}
\cos\theta & 0 & -\sin\theta & 0 \\
0 & 1 & 0 & 0 \\
\sin\theta & 0 & \cos\theta & 0 \\
0 & 0 & 0 & 1 
\end{pmatrix}  
\in \spfr. 
\]
Direct computation shows that the action of this matrix fixes $iI_2$ and sends $vec_1$ (identified with $1\in S^1\subset \BC$) in the fiber $S^1$ over $iI_2$ to the following combination of $vec_1$ and $vec_2$: 
\[
 \begin{pmatrix}
0 & \cos\theta-i \sin \theta \\
-\cos\theta+i \sin \theta & 0 
\end{pmatrix}. 
\]
Consequently the homomorphism $S^1\to S^1$ above is just $z\mapsto z^{-1}$. 
\end{proof}

Write $q_0$ for the basepoint in $\fH_2\times S^1$ given by $vec_1$ at $iI_2$. The fiber $S^1$ over $iI_2$ is identified with the circle $\{vec_1\cos\theta  +vec_2\sin \theta \}_{\theta\in \BR}$ in the fiber $\BC$ (of the normal bundle) over $iI_2$. 

\begin{cor}
The action $\spfr\curvearrowright \fH_2\times S^1$ is transitive and $\Stab_{\fH_2\times S^1}(q_0)\cong \sutwo$. 
\label{cor:transtabs1}
\end{cor}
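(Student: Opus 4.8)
The plan is to read the statement off from Lemma \ref{lem:phi} together with the transitivity of the usual action $\spfr \curvearrowright \fH_2$ on the base. For transitivity of $\spfr \curvearrowright \fH_2 \times S^1$: the action preserves the projection to $\fH_2$, and $\spfr$ is already transitive on $\fH_2$, so every point of $\fH_2 \times S^1$ can be moved into the fiber over $iI_2$; it then suffices to see that $\Stab_{\fH_2}(iI_2) \cong \utwo$ acts transitively on that fiber circle. By construction $\utwo$ acts on the fiber over $iI_2$ through the homomorphism $\phi \colon \utwo \to S^1$, and Lemma \ref{lem:phi} gives $\phi = (\det)^{-1}$, which is surjective since $\det \colon \utwo \to S^1$ is; hence the fiber action is transitive, and composing it with transitivity on the base gives transitivity of $\spfr \curvearrowright \fH_2 \times S^1$.

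For the stabilizer: if $M \in \spfr$ fixes $q_0 = (iI_2, vec_1)$ then it fixes $iI_2$, so $M \in \Stab_{\fH_2}(iI_2) \cong \utwo$, and it fixes the marked point $vec_1$ of the fiber circle, which under the identification of that circle with $S^1$ (with $vec_1$ corresponding to $1$) forces $\phi(M) = 1$. Conversely, any $M \in \ker\phi$ fixes $iI_2$ and acts as the identity on the fiber, hence fixes $q_0$. So $\Stab_{\fH_2 \times S^1}(q_0) = \ker\phi$, which by Lemma \ref{lem:phi} equals $\ker\det = \sutwo$, sitting inside $\spfr$ through the embeddings $\sutwo \hookrightarrow \utwo \cong \Stab_{\fH_2}(iI_2) \hookrightarrow \spfr$ of Lemma \ref{lem:stabilizerembed}.

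There is no substantial obstacle here; the only point that has to be invoked rather than proved afresh is that $\utwo$ acts on the fiber circle over $iI_2$ by genuine rotations $z \mapsto \phi(M)z$, which is exactly what licenses both uses of $\phi$ above. This was already recorded when the action $\spfr \curvearrowright \fH_2 \times S^1$ was defined: restricted to a single normal fiber $\BC$, the $\spfr$-action is multiplication by a nonzero complex number (a consequence of the computations in Subsection \ref{subsec:3.2}, where on each generator of $\spfr$ the tangent map sends $vec_2 = i\cdot vec_1$ to $i$ times the image of $vec_1$), so a fiber-preserving element acts on $\BC$ by a complex scalar and therefore on $S^1$ by a rotation. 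Granting this, the argument is immediate.
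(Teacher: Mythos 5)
Your argument is correct and follows essentially the same route as the paper: both reduce transitivity to transitivity on the base plus the fact (from Lemma \ref{lem:phi} and the matrix computation in its proof) that $\Stab_{\fH_2}(iI_2)\cong\utwo$ sweeps out the whole fiber circle over $iI_2$, and both identify $\Stab_{\fH_2\times S^1}(q_0)$ as $\ker\phi=\ker\det=\sutwo$ inside $\utwo\cong\Stab_{\fH_2}(iI_2)$. You merely spell out more explicitly the two points the paper leaves implicit, namely that surjectivity of $\phi=(\det)^{-1}$ gives the fiber transitivity and that the fiber action is a genuine rotation.
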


\begin{proof}
From the matrix computation above, we see that the orbit of $q_0$ contains the fiber $S^1$ over $iI_2$. As  $\spfr\curvearrowright \fH_2$ is transitive, $\spfr\curvearrowright \fH_2\times S^1$ is transitive. 

An element in $\Stab_{\fH_2\times S^1}(q_0)$ must fix $iI_2$ and hence is in $\Stab_{\fH_2}(iI_2)$. By the lemma above, $\Stab_{\fH_2\times S^1}(q_0)\cong \sutwo$. 
\end{proof}

\subsubsection{Lifting to the universal cover. }

Let 
\[
\pi:\spfrt\to \spfr
\]
be the universal covering. 
Using homeomorphisms above, the universal cover $\spfrt$ of $\spfr$ is homeomorphic to $\utwot$ times a vector space. Inside $\utwot$ which is homeomorphic to $S^3\times \BR$, there are $\BZ$ copies of $S^3$, denoted by 
\[
\pi^{-1}(\sutwo)=\{S^3_n\}_{n\in \BZ}. 
\]
 Consecutive spheres $S^3_n$ and $S^3_{n+1}$ are connected by an arc, which is  a single lift of the specified section. Explicitly, $S^3_n \subset\spfrt$ is represented by the following paths in $\spfr$ starting from identity: such a path first winds around the specified section $n$ times, and then goes anywhere while staying in $\sutwo$.

Fix a basepoint $\tilde{q}_0\in \fH_2\times \BR$ with image $q_0$ in $\fH_2\times S^1$. The inverse image of $q_0$ is $\{\tilde{q}_n\}_{n\in \BZ}$, where $\tilde{q}_n$ is the endpoint of the lift of the curve $\theta\in [0,2\pi]\mapsto vec_1\cos (n\theta)  +vec_2\sin (n\theta )$ starting at $\tilde{q}_0$. 

\begin{lem}
$S^3_n=\{g\in \spfrt |g\cdot \tilde{q}_0=\tilde{q}_{-n}\}$. 
\end{lem}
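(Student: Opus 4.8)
The plan is to read off both sides of the asserted equality as fibres of the orbit map $\mu\colon\spfrt\to\fH_2\times\BR$, $g\mapsto g\cdot\tilde q_0$, restricted to the discrete set $\pi^{-1}(\sutwo)=\mu^{-1}(p^{-1}(q_0))=\bigsqcup_{n\in\BZ}S^3_n$, where $p\colon\fH_2\times\BR\to\fH_2\times S^1$ is the covering. Note that $\pi^{-1}(\sutwo)=\pi^{-1}(\Stab_{\fH_2\times S^1}(q_0))$ is a closed subgroup of $\spfrt$ whose identity component is $S^3_0$; thus $S^3_0$ is a subgroup, mapped isomorphically onto $\sutwo$ by $\pi$ since $\sutwo\cong S^3$ is simply connected. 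First I would check $S^3_0\subseteq\Stab_{\spfrt}(\tilde q_0)$: every $g\in S^3_0$ is the endpoint of the lift from $e$ of a path $\delta$ in $\sutwo$ with $\delta(0)=I$, and then $t\mapsto\tilde\delta(t)\cdot\tilde q_0$ is a continuous lift of $t\mapsto\delta(t)\cdot q_0$ (which is constant equal to $q_0$, as $\delta(t)$ fixes $q_0$) starting at $\tilde q_0$, hence is constant; so $g\cdot\tilde q_0=\tilde q_0$.

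Next I would produce, for each $n\in\BZ$, the element $g_n\in\pi^{-1}(I)\subseteq Z(\spfrt)$ obtained as the endpoint of the lift from $e$ of the loop $\gamma_n\colon\theta\in[0,2\pi]\mapsto\begin{pmatrix}e^{in\theta}&0\\0&1\end{pmatrix}$ — the specified section wound $n$ times — and verify the two facts $S^3_n=g_nS^3_0$ and $g_n\cdot\tilde q_0=\tilde q_{-n}$. The first is exactly the concatenation-and-lift description of $S^3_n$ recalled just before the statement (a winding loop followed by a path in $\sutwo$ lifts to $g_n$ followed by a left translate of a path in $S^3_0$). For the second I would use Lemma \ref{lem:phi}: since $\phi(M)=\det(M)^{-1}$, we get $\phi\bigl(\begin{pmatrix}e^{i\theta}&0\\0&1\end{pmatrix}\bigr)=e^{-i\theta}$, so $\gamma_n(\theta)$ fixes $iI_2$ and rotates the fibre circle over $iI_2$ by $-n\theta$; identifying that circle with $\{\,vec_1\cos\alpha+vec_2\sin\alpha\,\}$ via $e^{i\alpha}\leftrightarrow vec_1\cos\alpha+vec_2\sin\alpha$, the path $\theta\mapsto\gamma_n(\theta)\cdot q_0$ becomes $\theta\mapsto\bigl(iI_2,\ vec_1\cos(-n\theta)+vec_2\sin(-n\theta)\bigr)$. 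By the defining property of the lifted action, $g_n\cdot\tilde q_0$ is the endpoint of the lift of this path starting at $\tilde q_0$, which by the very definition of $\tilde q_m$ with $m=-n$ is $\tilde q_{-n}$.

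Finally I would combine these. Given $g$ with $g\cdot\tilde q_0=\tilde q_0$, one has $\pi(g)\in\sutwo$, so $g\in S^3_m$ for some $m$, hence $g=g_mh$ with $h\in S^3_0\subseteq\Stab_{\spfrt}(\tilde q_0)$; then $g_m\cdot\tilde q_0=\tilde q_0$, so $\tilde q_{-m}=\tilde q_0$ and $m=0$. Thus $\Stab_{\spfrt}(\tilde q_0)=S^3_0$, and therefore
\[
\{\,g\in\spfrt\ \mid\ g\cdot\tilde q_0=\tilde q_{-n}\,\}=g_n\,\Stab_{\spfrt}(\tilde q_0)=g_nS^3_0=S^3_n,
\]
since the left-hand set is the left coset of the stabiliser containing $g_n$.

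The hard part will be none of the topology — it is the sign bookkeeping in the middle step: one must fix compatible orientation conventions for the section $e^{i\theta}\mapsto\begin{pmatrix}e^{i\theta}&0\\0&1\end{pmatrix}$, for the fibre circle $\{vec_1\cos\alpha+vec_2\sin\alpha\}$, and for the normalization of the lifted action (keeping straight that Lemma \ref{lem:phi} yields $\det(M)^{-1}$, not $\det(M)$), so that the winding loop $\gamma_n$ really carries $\tilde q_0$ to $\tilde q_{-n}$ and not to $\tilde q_{n}$. Everything else is formal manipulation of covering spaces, path lifts, and cosets.
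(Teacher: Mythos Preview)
Your proposal is correct and follows essentially the same approach as the paper: both arguments identify $S^3_n$ via paths that wind $n$ times around the specified section before entering $\sutwo$, both invoke Lemma~\ref{lem:phi} to compute that such a winding sends $q_0$ along the curve $\theta\mapsto vec_1\cos(-n\theta)+vec_2\sin(-n\theta)$, and both lift this to conclude $\tilde q_0\mapsto\tilde q_{-n}$. The only organizational difference is that you isolate the central element $g_n$ and phrase the conclusion as a coset identity $g_n\Stab(\tilde q_0)=g_nS^3_0$, whereas the paper argues the two inclusions directly; the content is the same, and your concern about sign bookkeeping is exactly the point the paper glosses as ``in a way that is computed in the matrix calculation above.''
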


\begin{proof}
$S^3_n \subset \{g\in \spfrt |g\cdot \tilde{q}_0=\tilde{q}_{-n}\}$: An element in $S^3_n$ is represented by a path $\pi(\gamma)$ in $\spfr$ described above. Take its lift $\gamma:[0,1]\to \spfrt$ starting from the identity, and then the element becomes $\gamma(1)$. 
The path $\pi(\gamma)$ acts on $q_0$ in the following way: first, $q_0$ is moved along the fiber over $iI_2$, in a way that is computed in the matrix calculation above, and returns to $q_0$; then, the point $q_0$ is fixed. When applying the lifted path $\gamma$ to $ \tilde{q}_0$, we obtain a lift in  $\fH_2\times\BR$ of the former curve in $\fH_2\times S^1$, starting from $\tilde{q}_0$. We then see that $\tilde{q}_0$ is moved to $\tilde{q}_{-n}$.

$S^3_n \supset \{g\in \spfrt |g\cdot \tilde{q}_0=\tilde{q}_{-n}\}$: 
For $g\in \spfrt$ such that $g\cdot \tilde{q}_0=\tilde{q}_{-n}$, $\pi(g)$ must send $q_0$ to $q_0$, and hence $\pi(g)$ is in the stabilizer of $q_0$. Thus $g$ is in $\pi^{-1}(\sutwo)=\{S^3_n\}_{n\in \BZ}$. Considering the action of $g$, it must be in the claimed copy of $S^3$. 
\end{proof}

\begin{cor}
$\Stab_{\fH_2\times \BR}(\tilde{q}_0)=S^3_0\cong \sutwo $. 
\label{cor:stabr}
\end{cor}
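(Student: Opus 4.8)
The plan is to obtain Corollary~\ref{cor:stabr} as an immediate consequence of the preceding lemma specialized to $n=0$, supplemented by a brief identification of the relevant sheet with $\sutwo$. Setting $n=0$ in the identity $S^3_n=\{g\in\spfrt\mid g\cdot\tilde q_0=\tilde q_{-n}\}$ yields $S^3_0=\{g\in\spfrt\mid g\cdot\tilde q_0=\tilde q_0\}$, which is by definition the stabilizer $\Stab_{\fH_2\times\BR}(\tilde q_0)$. This gives the first equality for free.

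It then remains to see that $S^3_0\cong\sutwo$. Since $\sutwo\subset\spfr$ is a closed subgroup, $\pi^{-1}(\sutwo)$ is a closed subgroup of $\spfrt$, and $\pi$ restricts to a covering map $\pi^{-1}(\sutwo)\to\sutwo$ with deck group $\ker\pi\cong\BZ$; its sheets are precisely the $S^3_n$, $n\in\BZ$. The sheet $S^3_0$ through the identity is thus the identity component of $\pi^{-1}(\sutwo)$, hence an open and closed Lie subgroup, and $\pi|_{S^3_0}\colon S^3_0\to\sutwo$ is a covering homomorphism onto a connected group. As $\sutwo$ is diffeomorphic to $S^3$ and therefore simply connected, this covering is trivial, so $\pi|_{S^3_0}$ is an isomorphism of Lie groups and $S^3_0\cong\sutwo$.

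There is essentially no obstacle here: the content has already been established in the lemma. The one point requiring minimal care is the bookkeeping of basepoints, namely that $S^3_0$ is genuinely the sheet containing the identity and that it fixes $\tilde q_0$ rather than translating it along the fiber $\BR$; this is exactly the normalization built into the previous lemma (the winding number $n=0$ corresponds to $\tilde q_{-0}=\tilde q_0$), together with the description of $S^3_n$ as the lifts of loops in $\sutwo$ winding $n$ times around the chosen section of $\det\colon\utwo\to S^1$. Alternatively, and equivalently, one could derive the statement from Corollary~\ref{cor:transtabs1} together with the lifting theory for group actions: $\pi$ carries $\Stab_{\fH_2\times\BR}(\tilde q_0)$ onto $\Stab_{\fH_2\times S^1}(q_0)\cong\sutwo$, and since $\sutwo$ is simply connected this restriction is injective, hence an isomorphism. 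Invoking the lemma with $n=0$ is the most direct route.
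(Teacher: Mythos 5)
Your proposal is correct and matches the paper's intent exactly: the corollary is stated without proof precisely because it is the $n=0$ case of the preceding lemma, with the identification $S^3_0\cong\sutwo$ already built into the setup (the sheet of $\pi^{-1}(\sutwo)$ through the identity maps isomorphically onto $\sutwo$ since $S^3$ is simply connected). Your additional justification of that identification is a reasonable, correct elaboration of what the paper leaves implicit.
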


\subsubsection{The proof. }

\begin{proof}[Proof of Theorem \ref{thm:models}]
By Corollaries \ref{cor:transtabs1} and \ref{cor:stabr}, it remains to prove transitivity of the action $\spfrt\curvearrowright \fH_2\times \BR$. 

Take any $\tilde{p}\in \fH_2\times \BR$ with image $p\in \fH_2\times S^1$. Transitivity of $\spfr\curvearrowright \fH_2\times S^1$ implies that there exists $g\in\spfr$ such that $g\cdot q_0=p$. Take a curve $\gamma$ in $\spfr$ connecting  $\id$ and $g$, which gradually moves $q_0$ to $p$ along the curve $t\mapsto \gamma(t)\cdot q_0$. Consider the lift of this curve determined by $\tilde{p}$. The other endpoint of this lift must be some lift of $q_0$, i.e. $\tilde{q}_{n}$ for some $n$. It follows that the following path starting from $\id$ in $\spfr$, considered as a point in $\spfrt$, will send $\tilde{q}_{0}$ to $\tilde{p}$: the path first winds around the specified circle suitably many times, and then goes along $\gamma$ to $g$. 
\end{proof}


\subsection{Central extension}
\label{subsec:3.4}

Inspired by the model above, from now on we consider more generally the model corresponding to $\Sptnr$, $n\geq 2$ instead of $\spfr$. 
In this subsection, we perform a central extension on $\Sptnrt$ to obtain a group $\gxzghrh$ acting on $\CX=\Sptnrt/\Sun$, realizing the model space $\CX=\Sptnrt/\Sun$ as $\gxzghrh$ modulo its maximal compact subgroup. This is basically Section 5 of \cite{dlsw} with slight modifications.

\subsubsection{Structure of $\Sptnrt$}

We recall some basic facts about $\Sptnr$ and also $\Un$, which is embedded in $\Sptnr$ as the maximal compact subgroup with corresponding homogeneous space the Siegel upper half-space $\Sptnr\curvearrowright \fH_n$. 
The stabilizer $\Stab_{\fH_n}(iI_n)$ of $iI_n\in \fH_n$ is isomorphic to $\Un$, and is the maximal compact subgroup of $\Sptnr$. Topologically, $\Sptnr$ is just $\Un$ times a vector space. 

There is an exact sequence
\[
0\to \Sun\to \Un \to S^1\to 0
\]
where $\det:\Un \to S^1$ is just the determinant, and is also a trivial principal $\Sun$-bundle with a global section given by 
\[
S^1 \to \Un, 
e^{i\theta}\mapsto   \begin{pmatrix}
e^{i\theta} &   &   &   \\
  & 1 &   &   \\
  &   & \ddots &   \\
  &   &   & 1 
\end{pmatrix}   . 
\]
This global section induces an isomorphism on fundamental groups. 

Topologically, the total space $\Un$ of the fiber bundle is just homeomorphic to a product $\Sun\times S^1$. Such a homeomorphism identifies the subgroup $\Sun$ with the subspace $\Sun\times \{*\}\subset \Sun\times S^1$, and identifies the image of the global section with the subspace $\{*\} \times S^1\subset \Sun\times S^1$.

Let 
\[
\pi:\Sptnrt\to \Sptnr
\]
be the universal covering. 
Using homeomorphisms above, the universal cover $\Sptnrt$ of $\Sptnr$ is homeomorphic to $\Unt$ times a vector space. Inside $\Unt$ which is homeomorphic to $\Sun\times \BR$, there are $\BZ$ topological  copies of $\Sun$. The copy containing identity is mentioned as \textit{the} subgroup $\Sun$.

Recall the center of $\Sptnr$ is $\{\pm\mathrm{id}\}$. 

\begin{lem}
The center of $\Sptnrt$ is $\pi^{-1}(\{\pm\mathrm{id}\})$. For $n $ even, $ \pi^{-1}(\{\pm\mathrm{id}\})\cong \BZ\oplus \BZ/2\BZ$, while for $n$ odd $\pi^{-1}(\{\pm\mathrm{id}\})\cong \BZ$. 
\label{lem:center}
\end{lem}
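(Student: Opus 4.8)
The plan is to compute the center of $\Sptnrt$ directly from the covering $\pi:\Sptnrt\to\Sptnr$ and the structure of its kernel. First I would record that $\ker\pi$ is central in $\Sptnrt$ (it is the deck group of a connected covering of a connected group, hence contained in the center). Since the center of $\Sptnr$ is $\{\pm\id\}$ and $\pi$ is surjective, any central element of $\Sptnrt$ maps into $\{\pm\id\}$, so $Z(\Sptnrt)\subset\pi^{-1}(\{\pm\id\})$. For the reverse inclusion, given $z\in\pi^{-1}(\{\pm\id\})$, observe that for any $g\in\Sptnrt$ the element $gzg^{-1}z^{-1}$ lies in $\ker\pi$ (because $\pi(z)$ is central in $\Sptnr$); this gives a continuous map $\Sptnrt\to\ker\pi$ which is trivial at $g=\id$, hence trivial by connectedness of $\Sptnrt$ and discreteness of $\ker\pi$. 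Therefore $z$ is central, proving $Z(\Sptnrt)=\pi^{-1}(\{\pm\id\})$.

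Next I would identify this group as an abstract extension of $\BZ/2\BZ$ by $\BZ$. We have the exact sequence $0\to\ker\pi\to\pi^{-1}(\{\pm\id\})\to\{\pm\id\}\to 0$, with $\ker\pi\cong\pi_1(\Sptnr)\cong\BZ$ (using that $\Sptnr$ is homotopy equivalent to $\Un$, whose fundamental group is $\BZ$, detected by the determinant section recalled just above). So $\pi^{-1}(\{\pm\id\})$ is either $\BZ$ (if the extension is nonsplit, i.e.\ $-\id$ lifts to an element of infinite order which squares to the generator of $\ker\pi$) or $\BZ\oplus\BZ/2\BZ$ (if it splits). To decide which case occurs, I would work inside $\Unt$: the element $-\id\in\Un$ is the value $e^{i\pi}\cdot\id$ of the diagonal circle $t\mapsto \diag(e^{it},\dots,e^{it})$, whose image under $\det:\Un\to S^1$ is $t\mapsto e^{int}$. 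Lifting the path $t\in[0,\pi]$ from the identity, the endpoint is a preferred lift $\widetilde{(-\id)}$ of $-\id$; squaring it corresponds to running the loop $t\in[0,2\pi]$, whose class in $\pi_1(\Un)\cong\pi_1(S^1)=\BZ$ (via $\det$) is $n$. Hence $\widetilde{(-\id)}^{\,2}$ equals $n$ times the generator of $\ker\pi\cong\BZ$.

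The arithmetic conclusion is then immediate: the subgroup of $Z(\Sptnrt)\cong\{a\in\BZ\}\oplus\langle\widetilde{(-\id)}\rangle/(\text{relation }\widetilde{(-\id)}^2=n)$... more precisely, $Z(\Sptnrt)$ is generated by the generator $c$ of $\ker\pi$ together with $w:=\widetilde{(-\id)}$, subject only to $w^2=c^{\,n}$ (and commutativity). If $n$ is odd, then $c=(w^2)(c^{\,(1-n)/2})^2\cdots$—more cleanly, $\gcd(2,n)=1$ lets one check $\langle w\rangle=Z(\Sptnrt)$ is infinite cyclic, so $\pi^{-1}(\{\pm\id\})\cong\BZ$. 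If $n$ is even, write $n=2m$; then $w c^{-m}$ has order $2$ and is independent of $c$, giving a splitting $Z(\Sptnrt)\cong\langle c\rangle\oplus\langle wc^{-m}\rangle\cong\BZ\oplus\BZ/2\BZ$. I expect the main obstacle to be pinning down the integer $n$ in the relation $w^2=c^{\,n}$ with the correct normalization—i.e.\ making sure the determinant map $\Un\to S^1$ and the chosen generator of $\pi_1$ are used consistently so that the diagonal circle $t\mapsto\diag(e^{it},\dots,e^{it})$ genuinely represents $n$ times the generator; everything else is formal.
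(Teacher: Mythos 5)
Your proposal is correct in its overall strategy and reaches the right conclusions, but it takes a partly different route from the paper and contains one erroneous intermediate claim. For the identification $Z(\Sptnrt)=\pi^{-1}(\{\pm\id\})$ you run the commutator argument ($g\mapsto gzg^{-1}z^{-1}$ lands in the discrete group $\ker\pi$, hence is constant) explicitly; the paper instead quotes the general fact that a discrete normal subgroup of a connected Lie group is central, applied to $\pi^{-1}(\{\pm\id\})$. These are the same idea. For the parity analysis the routes genuinely diverge: the paper treats the two cases by separate constructions ($n$ even: $-\id$ has determinant $1$ as a complex matrix, so it lies in the simply connected $\Sun$ and therefore lifts to an order-two element splitting the sequence; $n$ odd: an explicit path to $-\id$ whose square generates $\ker\pi$), whereas you derive the single uniform relation $w^2=c^{\,n}$ from the diagonal path $t\mapsto e^{it}\cdot\id$ and then read off both cases from the presentation $\langle c,w\mid [c,w]=1,\ w^2=c^{\,n}\rangle$. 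Your computation of the relation is correctly normalized ($\det$ induces an isomorphism on $\pi_1$, and $\det(e^{it}\id)=e^{int}$), and this matches the paper's own observation that the central circle represents $n$ times the generator of $\pi_1(\Un)$. The uniform relation is arguably cleaner; the paper's $n$-even argument buys a canonical splitting sitting inside the subgroup $\Sun$, which it reuses later.

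The one genuine slip is in the odd case: the claim $\langle w\rangle=Z(\Sptnrt)$ is false for odd $n\geq 3$. Under the isomorphism $\BZ^2/\langle(n,-2)\rangle\cong\BZ$ given by $(x,y)\mapsto 2x+ny$, the element $w$ maps to $n$ and $c$ to $2$, so $\langle w\rangle$ has index $n$ (e.g.\ index $3$ when $n=3$); the generator of the center is $w^{a}c^{b}$ with $an+2b=1$, which is a different lift of $-\id$ than your $w$ (and is, up to sign, the lift the paper constructs). This does not affect the isomorphism type: $\BZ^2/\langle(n,-2)\rangle\cong\BZ/\gcd(n,2)\oplus\BZ$ by Smith normal form, which is $\BZ$ for $n$ odd and $\BZ\oplus\BZ/2\BZ$ for $n$ even, so your conclusion stands once that step is restated correctly.
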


\begin{proof}
For any general group homomorphism $\phi:G\to H$, we have $\phi(Z(G))\subset Z(\phi(G))$, and hence $Z(G)\subset \phi^{-1}(Z(\phi(G)))$. 
Conversely, every discrete normal subgroup of a connected Lie
group is central.  Hence the center of $\Sptnrt$ is $\pi^{-1}(\{\pm\mathrm{id}\})$. 

There is  a   short exact sequence of discrete abelian groups
\[
0 \to  \ker\pi    \to    \pi^{-1}(\{\pm\mathrm{id}\})    \to   \{\pm\mathrm{id}\}  \to  0. 
\]
where $\ker\pi\cong \mathbb{Z}$, and $\pi^{-1}(\{\pm\mathrm{id}\})=Z(\Sptnrt)  $. 

For $n $ even, $-\mathrm{id}\in \mathrm{GL}(n,\BC)\subset\mathrm{GL}(2n, \BR)$ is in $\Sun\subset \Un\subset\Sptnr$. Passing to the universal cover, the specified subgroup $\Sun\subset \Unt$ contains  $-\mathrm{id}$, providing a splitting of the short exact sequence above, and hence in this case $ \pi^{-1}(\{\pm\mathrm{id}\})\cong \BZ\oplus \BZ/2\BZ$.

For $n$ odd $-\mathrm{id}\notin \Sun$. Take a curve connecting $\id$ and $-\id$ in $\Un$, representing an element in $\Unt$,  as follows: first connect $\id$ and the $n$-by-$n$ complex matrix 
$$ \begin{pmatrix}
1 &   &   &   \\
  & -1 &   &   \\
  &   & \ddots &   \\
  &   &   & -1 
\end{pmatrix}  $$
within $\Sun$, and then move along 
\[
t\in [0,\pi]\mapsto  \begin{pmatrix}
e^{i t} &   &   &   \\
  & -1 &   &   \\
  &   & \ddots &   \\
  &   &   & -1 
\end{pmatrix} .  
\]
Observe that the square of this element in $\Unt$ generates $\ker\pi$. It follows that in this case $\pi^{-1}(\{\pm\mathrm{id}\})\cong \BZ$ with a generator specified by the curve above. 
\end{proof}


\subsubsection{Structure of $\Unt$. }
We first recall the structure of  $\Unt$.

By \cite[Exercise~E6.4]{hm},  
for $n\geq 2, $ the Lie algebra $\mathfrak{u}(n)$ has imaginary scalar matrices as center $\mathfrak{z}$; the commutator subalgebra is the simple  Lie algebra
\[
\mathfrak{su}(n)=\{X\in\mathfrak{gl}(n,\mathbb{C})\mid X+X^*=0\text{ and }\mathrm{Tr} X=0\}\neq 0.
\]
By \cite[Theorem~6.4]{hm}  $\mathfrak{u}(n)$ splits as a direct sum
\[
\mathfrak{u}(n)= \mathfrak{z}\oplus \mathfrak{su}(n). 
\]

Let $Z(\Un)\cong\mathrm{U}(1)\cong S^1$ denote the center of $\Un$. By   \cite[Lemma~6.8]{hm},  the function $(z, s) \mapsto zs: Z(\Un)\times \Sun\to\Un$ is a surjective
morphism  whose kernel is isomorphic to $Z(\Un) \cap \Sun=  \{e^{2\pi ik/n}\cdot\mathrm{id}\mid k=0,\dots,n-1\} $. This is a covering. 

The universal cover $\Unt$ can be directly read off: $\Unt\cong \mathbb{R}\times \Sun$. The universal covering map is 
\[
\mathbb{R}\times \Sun\to Z(\Un)\times \Sun \to \Un, 
\]
inducing the identity map on the level of Lie algebra. 

The $\mathbb{R}\subset\Unt$ above corresponds to the center $\mathfrak{z}$. Explicitly, 
\[
\mathfrak{z}=\{ia\cdot\mathrm{id}\mid a\in \mathbb{R}\}, 
\]
\[
Z(\Un)=\{e^{ia}\cdot \mathrm{id}\mid a\in \mathbb{R}\}, 
\]
and the $\mathbb{R}\subset\Unt$ is the lift of  $Z(\Un)$.

As $a$ goes from $0$ to $2\pi$, $e^{ia}\cdot \mathrm{id}$ gives a curve representing $n$ times the generator of the fundamental group. This can be seen by noting that $\mathrm{det}:\Un\to S^1$ induces an isomorphism on fundamental groups.

The subgroup $\Sun\subset \Sptnrt$ is a maximal compact subgroup. This can be seen by noting that $\Sptnrt$ deformation retracts to $\Unt$, and hence also to $\Sun$. See also the proof of Lemma 
\ref{lem:maximalcompactmodel}.

\begin{lem-defn}[Modified Proposition 5.2 in \cite{dlsw}]
Let $\CG:=\Sptnrt$, and  $\CH:=\Sun$ be the  maximal compact subgroup of $\CG$. The center $Z(\CG)$ is described in Lemma \ref{lem:center}.  
Take the  abelian subgroup $\CR$ of $\CG$ to be the $\mathbb{R}$ in the decomposition $\Unt\cong \mathbb{R}\times \Sun$  above. 
Then 

(i) The subgroup of $\CG$ generated by $\CH=\Sun$ and $\CR$ is a direct product $\CH\CR=\Unt$.

(ii) The intersection of $\CR$ and $Z(\CG)$ is a lattice of $\CR$ and a subgroup of $Z(\CG)$ of index $n$. 

(iii) The center $Z(\CG)$ of $\CG$ is contained by $\CH\CR$. 

(iv)  The quotient $\CH\CR/Z(\CG)\cong\Un /\{\pm \mathrm{id}\}$ is embedded as a maximal compact subgroup in $\CG/Z(\CG)$. 
\label{lemdef:ghr}
\end{lem-defn}

\begin{proof}
(ii):   Number the elements in $\pi^{-1}(\{\pm\mathrm{id}\})$ as follows. Such an element is represented by a curve in $\Un$ starting from identity. We say that the element is the \textit{$k$-th lift}, $k\in \frac{1}{2}\BZ$,  if the element is represented by a curve whose image in $S^1$ (via $\det:\Un\to S^1$) winds around the circle $k$ times counterclockwise. 

For $n$ odd, note that as the parameter for $\CR$ increases from zero, the trajectory hits successively the following points in $\pi^{-1}(\{\pm\mathrm{id}\})$: $0$-th lift of $\id$, $\frac{n}{2}$-th lift of $-\id$, $n$-th lift of $\id$, $\frac{3n}{2}$-th lift of $-\id$, and so on. In this case points in $\pi^{-1}(\{\mathrm{id}\})$ are numbered by $\BZ$, while points in $\pi^{-1}(\{-\mathrm{id}\})$ are numbered by $\BZ+\frac{1}{2}$.  

For $n$ even, the trajectory hits successively $0$-th lift of $\id$, $\frac{n}{2}$-th lift of $-\id$, $n$-th lift of $\id$, $\frac{3n}{2}$-th lift of $-\id$, and so on. In this case points in $\pi^{-1}(\{\mathrm{id}\})$ are numbered by $\BZ$, and points in $\pi^{-1}(\{-\mathrm{id}\})$ are also numbered by $\BZ$. The result follows.

The proof of (iv) is given in the following lemma. 
\end{proof}

\begin{lem}
The maximal compact subgroup of $\CG/Z(\CG)\cong\Psptnr$ is $\Un /\{\pm \mathrm{id}\}$. 
\end{lem}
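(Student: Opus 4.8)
The plan is to deduce this statement directly from Lemma-Definition \ref{lemdef:ghr}(iv), together with the standard fact that maximal compact subgroups of a connected Lie group are unique up to conjugacy and contain (the image of) the center. First I would recall that $\Un$ is embedded in $\Sptnr$ as $\Stab_{\fH_n}(iI_n)$, and that $\Sptnr$ deformation retracts onto this $\Un$ (equivalently, by the Cartan decomposition $\Sptnr \cong \Un \times \BR^N$ for $N = \dim \fH_n$), so $\Un$ is a maximal compact subgroup of $\Sptnr$. Since $Z(\Sptnr) = \{\pm\mathrm{id}\} \subset \Un$, passing to the quotient $\Psptnr = \Sptnr/\{\pm\mathrm{id}\}$, the image of $\Un$ is $\Un/\{\pm\mathrm{id}\}$.

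Next I would argue that $\Un/\{\pm\mathrm{id}\}$ is compact (being the continuous image of the compact group $\Un$) and that it is maximal compact in $\Psptnr$. For maximality: if $K \supseteq \Un/\{\pm\mathrm{id}\}$ were a larger compact subgroup of $\Psptnr$, its preimage under the covering $\Sptnr \to \Psptnr$ would be a subgroup of $\Sptnr$ containing $\Un$ with compact identity component strictly larger than $\Un$ (the preimage of a compact subgroup under a finite covering is compact), contradicting maximality of $\Un$ in $\Sptnr$. Alternatively, one invokes that all maximal compact subgroups of a connected semisimple Lie group are conjugate and correspond under central quotients, which is exactly what item (iv) of Lemma-Definition \ref{lemdef:ghr} records on the level of $\CG = \Sptnrt$: the quotient $\CH\CR/Z(\CG) = \Unt/Z(\CG) \cong \Un/\{\pm\mathrm{id}\}$ is embedded as a maximal compact subgroup in $\CG/Z(\CG) = \Sptnrt/Z(\CG)$. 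The final point is the identification $\CG/Z(\CG) \cong \Psptnr$: since $Z(\CG) = \pi^{-1}(\{\pm\mathrm{id}\})$ by Lemma \ref{lem:center}, the covering $\pi: \Sptnrt \to \Sptnr$ descends to an isomorphism $\Sptnrt/\pi^{-1}(\{\pm\mathrm{id}\}) \xrightarrow{\sim} \Sptnr/\{\pm\mathrm{id}\} = \Psptnr$, under which $\CH\CR/Z(\CG)$ goes to $\Un/\{\pm\mathrm{id}\}$.

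I do not expect a serious obstacle here; the statement is essentially a restatement of \ref{lemdef:ghr}(iv) after the identification $\CG/Z(\CG)\cong\Psptnr$, and the only thing requiring a word of care is checking that maximality of the compact subgroup is preserved under the finite-kernel quotient $\Sptnrt \to \Psptnr$ (equivalently under $\Sptnr \to \Psptnr$). This is handled by the preimage argument above: a hypothetical strictly larger compact subgroup of $\Psptnr$ would pull back to one in $\Sptnr$ (or $\Sptnrt$), contradicting the known structure of maximal compacts there. So the proof is short: state that $\Un$ is maximal compact in $\Sptnr$ containing the center, quotient by $\{\pm\mathrm{id}\}$, and note maximality descends.
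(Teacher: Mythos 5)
Your argument is correct and is essentially the paper's own proof: the paper likewise assumes a larger maximal compact $K_0 \supseteq \Un/\{\pm \id\}$ and pulls it back through the double covering $p:\Sptnr\to\Psptnr$ to obtain a compact subgroup strictly containing $\Un$, contradicting maximality of $\Un$ in $\Sptnr$. One caveat: do not actually lean on Lemma-Definition (iv) as an input, since the paper proves (iv) by deferring to this very lemma; it is your self-contained preimage argument that carries the proof.
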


\begin{proof}
The group $\Un /\{\pm \mathrm{id}\}$ is compact. If it is not maximal, pick a maximal one $K_0$ containing it. Taking inverse image with respect to the double covering $p: \Sptnr\to \Psptnr$, we obtain a strict inclusion $\Un \subsetneq p^{-1}(K_0)$. But $\Un $ is already maximal. 
\end{proof}


\subsubsection{The central extension
$\CG\times_{Z(\CG)}\CH\CR/\CH$ and its action on $\CX$. }

For $n$ even, the map $\iota:Z(\CG)\cong   \mathbb{Z}\oplus \mathbb{Z}/2\mathbb{Z}       \to \CH\CR/\CH\cong \mathbb{R}$ is not an embedding. Its kernel is $Z(\CG)\cap \CH \cong \mathbb{Z}/2\mathbb{Z} $, consisting of the $0$-th lift of $-\id$ and the $0$-th lift of $\id$. 
For $n$ odd, the map $\iota:Z(\CG)\cong   \mathbb{Z}       \to \CH\CR/\CH\cong \mathbb{R}$ is injective. 

Together with the inclusion $i:Z(\CG)\to \CG$, we create the central extension: 

\begin{defn}
The central extension
$\CG\times_{Z(\CG)}\CH\CR/\CH$
is the quotient of $\CG\times \CH\CR/\CH$ by relations $(g,x)=(gz, x(\iota(z))^{-1})$, $z\in Z(\CG)$. That is to say, 
\[
\CG\times_{Z(\CG)}\CH\CR/\CH:=  \frac{\CG\times \CH\CR/\CH}{(i,(\iota)^{-1})Z(\CG)}. 
\]
\label{def:gxzghrh}
\end{defn}

The map $(i,(\iota)^{-1})$ is an injection. The subgroup $(i,(\iota)^{-1})Z(\CG)$ is discrete and central. 
Note that the group operation on  $\CH\CR/\CH\cong \mathbb{R}$ is written multiplicatively. We denote the elements of $\CH\CR/\CH$ as cosets $r\CH$ where $r\in \CR$. 

\begin{defn}
Let $\CX := \CG/\CH=\Sptnrt/\Sun$. Define an action
\[
\CG\times_{Z(\CG)}\CH\CR/\CH   \to    \mathrm{Diff}(\CX) 
\]
by
\[
(g,r\CH)\cdot(x\CH)=(gxr)\CH. 
\]
\label{def:extendedact}
\end{defn}
This is different from the one defined in \cite{dlsw}. 
This is well-defined as
\[
(g,r\CH)\cdot(xh\CH)=(gxhr)\CH=(gxrh)\CH=(gxr)\CH, \forall h\in \CH, 
\]
\[
((g_2,r_2\CH)(g_1,r_1\CH))\cdot(x\CH)
=(g_2g_1xr_2r_1)\CH
=(g_2g_1xr_1r_2)\CH
=(g_2,r_2\CH)\cdot((g_1,r_1\CH)\cdot(x\CH)), 
\]
\[
(z, (\iota(z))^{-1}\CH)\cdot(x\CH)
=zx (\iota(z))^{-1}\CH
=xz (\iota(z))^{-1}\CH
=x\CH, \forall z\in Z(\CG). 
\]

\subsubsection{Proof of Proposition \ref{prop:extendedact} }

We now show that the extended action 
\[
 \CG\times_{Z(\CG)}\CH\CR/\CH   \to    \mathrm{Diff}(\CX)=\mathrm{Diff}(\Sptnrt/\Sun) 
\]
 is transitive, and has compact stabilizer isomorphic to 
${\Un}/{\{\pm \id\}}$.

\begin{proof}[Proof of Proposition \ref{prop:extendedact}]

The composition
$
\CG \to \CG\times_{Z(\CG)}\CH\CR/\CH   \to    \mathrm{Diff}(\CX) 
$
is just the usual action on the coset space. 
Thus the action $ \CG\times_{Z(\CG)}\CH\CR/\CH   \to    \mathrm{Diff}(\CX) $ is transitive.

For the action $ \CG\times_{Z(\CG)}\CH\CR/\CH   \to    \mathrm{Diff}(\CX) $, consider the stabilizer of the basepoint, i.e. the coset $\CH$. 
\[
\begin{aligned}
&\mathrm{Stab}(\CH)\\
&=     \{(g,r\CH)\in  \CG\times_{Z(\CG)}\CH\CR/\CH  \mid     gr\in \CH                 \}\\
&=    \{(g,r\CH)\in  \CG\times_{Z(\CG)}\CH\CR/\CH  \mid   g=hs\in \CH\CR=\Unt, h\in \CH, s\in \CR,  gr=hsr\in \CH                 \}\\
&=\{(g,s^{-1}\CH)\in  \CG\times_{Z(\CG)}\CH\CR/\CH  \mid   g=hs\in \CH\CR=\Unt, h\in \CH, s\in \CR \}. \\
\end{aligned}
\]

Let $i:\CH\CR\to \CG$ be the embedding and $\iota: \CH\CR\to \CH\CR/\CH$ be the quotient map, extending the maps having the same notations above. Then 
$\mathrm{Stab}(\CH)$ is the image of the composition of  the injection $(i,(\iota)^{-1}):\CH\CR\to \CG\times \CH\CR/\CH$ and the quotient map $\CG\times \CH\CR/\CH\to \CG\times_{Z(\CG)}\CH\CR/\CH$. 
The kernel of the composition is $Z(\CG)$, and the image is 
\[
\mathrm{Stab}(\CH)\cong \frac{\CH\CR}{Z(\CG)}\cong \frac{\CH\CR/\ker \pi}{Z(\CG)/\ker \pi}\cong
\frac{\Un}{\{\pm \id\}}. 
\]
    It is compact.    
\end{proof}

\begin{lem}
The stabilizer $\mathrm{Stab}(\CH)$ is a maximal compact subgroup of $\CG\times_{Z(\CG)}\CH\CR/\CH$. 
\label{lem:maximalcompactmodel}
\end{lem}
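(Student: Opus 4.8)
The plan is to exhibit $\mathrm{Stab}(\CH)\cong \Un/\{\pm\id\}$ as a \emph{maximal} compact subgroup of $\CG\times_{Z(\CG)}\CH\CR/\CH$ by combining two ingredients already available: the fact that $\Un$ (equivalently $\Un/\{\pm\id\}$) is maximal compact in $\Sptnr$ (resp.\ $\Psptnr$), and the exact sequence from Proposition \ref{prop:exactseq}, together with the standard fact that a connected Lie group deformation retracts onto any maximal compact subgroup. First I would record that $\CG\times_{Z(\CG)}\CH\CR/\CH$ is a connected Lie group (it is a quotient of the connected group $\CG\times \CH\CR/\CH$ by a discrete central subgroup), so by the Cartan--Iwasawa--Malcev theorem all its maximal compact subgroups are conjugate and it retracts onto any one of them; hence it suffices to show $\mathrm{Stab}(\CH)$ is \emph{a} maximal compact, i.e.\ that it is not properly contained in a larger compact subgroup.

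The key step is the following argument by contradiction using the exact sequence $0\to \BR \xrightarrow{} \CG\times_{Z(\CG)}\CH\CR/\CH \xrightarrow{\eta} \Psptnr \to 0$. Suppose $\mathrm{Stab}(\CH)\subsetneq K_0$ with $K_0$ compact. Since $\eta$ restricted to $\mathrm{Stab}(\CH)$ has image exactly $\eta(\mathrm{Stab}(\CH))$, which one computes to be $\CH\CR/Z(\CG)$ mapped into $\Psptnr$, i.e.\ the maximal compact $\Un/\{\pm\id\}$ of $\Psptnr$ described in Lemma-Definition \ref{lemdef:ghr}(iv). Now $\eta(K_0)$ is a compact subgroup of $\Psptnr$ containing the maximal compact $\Un/\{\pm\id\}$, so $\eta(K_0)=\Un/\{\pm\id\}=\eta(\mathrm{Stab}(\CH))$. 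On the other hand, $\ker\eta\cong\BR$ has no nontrivial compact subgroups, so $K_0\cap\ker\eta=\{1\}$; therefore $\eta|_{K_0}$ is injective, and since $\eta(K_0)=\eta(\mathrm{Stab}(\CH))$ with $\mathrm{Stab}(\CH)\subset K_0$, injectivity forces $K_0=\mathrm{Stab}(\CH)$, a contradiction. This establishes maximality.

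I would also double-check the one computation the argument rests on, namely $\eta(\mathrm{Stab}(\CH))=\Un/\{\pm\id\}$: from the proof of Proposition \ref{prop:extendedact}, $\mathrm{Stab}(\CH)$ is the image of $\CH\CR$ under $(i,(\iota)^{-1})$ composed with $\theta$, and $\eta\circ\theta\circ(i,(\iota)^{-1})$ sends $\CH\CR=\Unt$ onto its image in $\CG/Z(\CG)$, which by Lemma-Definition \ref{lemdef:ghr}(iii)--(iv) is $\CH\CR/Z(\CG)\cong \Un/\{\pm\id\}$, the maximal compact of $\Psptnr$. The main obstacle—though it is minor—is making sure the exact sequence of Proposition \ref{prop:exactseq} is legitimately invoked here even though that proposition is proved later in the paper; since the excerpt permits assuming earlier-stated results and Proposition \ref{prop:exactseq} is stated earlier, this is acceptable, but I would phrase the proof so that it only uses the existence of $\eta$ with kernel $\cong\BR$ and the identification of $\eta(\mathrm{Stab}(\CH))$ with a maximal compact of $\Psptnr$, both of which are clean. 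An alternative, self-contained route avoiding Proposition \ref{prop:exactseq} entirely is to use the deformation retraction of $\CG\times_{Z(\CG)}\CH\CR/\CH$ onto its maximal compact together with a direct dimension/topology count showing $\mathrm{Stab}(\CH)$ already realizes the homotopy type of the whole group (it is the image of $\Unt\cong\BR\times\Sun$, onto which $\CG=\Sptnrt\simeq \Unt$ retracts), but the exact-sequence argument is shorter and I would present that.
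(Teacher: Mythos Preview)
Your proof is correct and takes a genuinely different route from the paper's. The paper argues topologically: the principal $\mathrm{Stab}(\CH)$-bundle $\CG\times_{Z(\CG)}\CH\CR/\CH\to\CX$ has contractible base, so the total space is homotopy equivalent to the fiber; any strictly larger compact $K_1\supsetneq\mathrm{Stab}(\CH)$ would also realize the homotopy type of the group, forcing equal dimension, hence $\mathrm{Stab}(\CH)$ would be the identity component of $K_1$, and extra components would contradict connectedness. Your argument is instead algebraic: push a hypothetical larger compact $K_0$ through $\eta$ into $\Psptnr$, use that $\eta(\mathrm{Stab}(\CH))=\Un/\{\pm\id\}$ is already maximal compact there (Lemma-Definition~\ref{lemdef:ghr}(iv)) to get $\eta(K_0)=\eta(\mathrm{Stab}(\CH))$, and use $\ker\eta\cong\BR$ having no nontrivial compact subgroups to get injectivity of $\eta|_{K_0}$, forcing $K_0=\mathrm{Stab}(\CH)$. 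The paper's approach has the advantage of being entirely self-contained within Section~\ref{sec:3} and not forward-referencing Proposition~\ref{prop:exactseq} (which is proved only in Subsection~\ref{subsec:4.2}); your approach is shorter and more transparent structurally, and since the proof of Proposition~\ref{prop:exactseq} is a direct quotient computation that does not invoke Lemma~\ref{lem:maximalcompactmodel}, there is no circularity, only a mild reordering of logical dependencies. Your verification that $\eta(\mathrm{Stab}(\CH))=\Un/\{\pm\id\}$ via the description of $\mathrm{Stab}(\CH)$ as the image of $\CH\CR$ under $\theta\circ(i,(\iota)^{-1})$ is exactly right.
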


\begin{proof}
There is a principal $\mathrm{Stab}(\CH)$-bundle
\[
\mathrm{Stab}(\CH)\to \CG\times_{Z(\CG)}\CH\CR/\CH \to \CX
\]
with base space $\CX$ contractible. Hence this is a trivial bundle, and $\mathrm{Stab}(\CH)$ and $\CG\times_{Z(\CG)}\CH\CR/\CH$ are homotopy equivalent. Both of the groups are connected. 

Assume that there is a strict inclusion $\mathrm{Stab}(\CH)\subsetneq K_1$ where $K_1$ is a maximal compact subgroup. It follows that $K_1$ is also homotopy equivalent to $\CG\times_{Z(\CG)}\CH\CR/\CH$. As $\mathrm{Stab}(\CH)$ and $ K_1$ are homotopy equivalent closed orientable manifolds, from homology groups we infer that they must have the same dimension. So $\mathrm{Stab}(\CH)$ is the component of $K_1$ containing identity. Given the strict inclusion $\mathrm{Stab}(\CH)\subsetneq K_1$, there are more components, but this contradicts the fact that $K_1$ is homotopy equivalent to $\CG\times_{Z(\CG)}\CH\CR/\CH$ which is connected. 
\end{proof}

\section{Fibering over the Siegel upper half-space}
\label{sec:4}
In this section, we show that 
the action of $\CG\times_{Z(\CG)}\CH\CR/\CH$ preserves the fibering structure of 
the trivial principal $\CH\CR/\CH$-bundle $\nu: \Sptnrt/\Sun \to  \Sptnrt/\Unt  $.  The induced action of $\CG\times_{Z(\CG)}\CH\CR/\CH$ on the base space coincides with the usual action of $\Sptnr$ on $\Sptnr/\Un$, and 
there is an exact sequence  of groups 
\[
0\to \theta(\{\mathrm{id}\}\times \CH\CR/\CH) \to \CG\times_{Z(\CG)}\CH\CR/\CH \to \CG/Z(\CG) \to 0, 
\]
where 
\[
\theta: \CG\times \CH\CR/\CH\to \CG\times_{Z(\CG)}\CH\CR/\CH
\]
is the quotient map,  $\theta(\{\mathrm{id}\}\times \CH\CR/\CH)\cong \mathbb{R}$,  and $\CG/Z(\CG)\cong \Psptnr$. 

As a consequence of the exact sequence, Subsection  \ref{subsec:4.3} discusses $\CG\times_{Z(\CG)}\CH\CR/\CH$ as a topological group extension.

\subsection{Fiber-preserving action}

Observe that the following diagram of spaces with $\spfrt$ action and equivariant maps is commutative: 
\[
\begin{tikzcd}
\fH_2\times \mathbb{R} \arrow[r] \arrow[d, equal] & \fH_2\times S^1 \arrow[r] \arrow[d, equal] & \fH_2 \arrow[d, equal] \\
\spfrt/\sutwo \arrow[r]                                       & \spfrt/\pi^{-1}\sutwo \arrow[r]                        & \spfrt/\utwot                     
\end{tikzcd} 
\]
Here the vertical identifications are given by specifying basepoints $*$. The reason is that, for any point $a*\in \fH_2\times \mathbb{R}$, $a\in \spfrt$, the left square gives
\[
\begin{tikzcd}
a* \arrow[r, maps to] \arrow[d, maps to] & a* \arrow[d, maps to] \\
a\sutwo \arrow[r, maps to]               & a\pi^{-1}\sutwo      
\end{tikzcd}
\]
which is commutative. The right square is similar.

This observation  leads us to consider the trivial fiber bundle $\nu: \spfrt/\sutwo \to  \spfrt/\utwot  $ with fiber $\mathbb{R}$, and more generally $\nu: \Sptnrt/\Sun=\CG/\CH \to  \Sptnrt/\Unt=\CG/\CH\CR  $. In general we have that 

\begin{lem}[\cite{lglagokv}, II.  Lie Transformation Groups, Chapter 2, Section 1, Lemma 1.1]
Let $G$ be a Lie group and $N \supset H$ two Lie subgroups, where $H$
is normal in $N$. Then the natural mapping $f : G /H \to G/N$ is the projection
of an (analytic) principal bundle, with structure group $N/H$, whose right action
on $G / H$ is given by the formula
\[
(gH)(nH)=gnH, (g\in G,n\in N). 
\]
\end{lem}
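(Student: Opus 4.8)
The plan is to check directly that the displayed formula gives a free right action of $N/H$ on $G/H$ whose orbits are exactly the fibres of $f$, and then to transport the local trivialisations of the principal bundle $G\to G/N$ down to $f$.

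First I would verify that $(gH)\cdot(nH):=gnH$ is well defined. If $g'=gh_1$ and $n'=nh_2$ with $h_1,h_2\in H$, then $g'n'H=gh_1nh_2H=gh_1nH=gn\,(n^{-1}h_1n)\,H=gnH$, the last step using that $h_1\in H$ and, since $H$ is normal in $N$ and $n\in N$, also $n^{-1}h_1n\in H$. The identities $(gH)\cdot(eH)=gH$ and $\bigl((gH)\cdot(n_1H)\bigr)\cdot(n_2H)=(gH)\cdot(n_1n_2H)$ are immediate, so this is a right action of the group $N/H$, and it commutes with the left $G$-action $g_0\cdot(gH)=g_0gH$. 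The action is free, since $gnH=gH$ forces $n\in H$; and its orbit through $gH$ is $\{gnH:n\in N\}$, which is precisely $f^{-1}(gN)=f^{-1}(f(gH))$. Thus $N/H$ acts freely with orbit space $G/N$ realised by $f$.

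Next I would establish (analytic) local triviality. Using the standard fact that $\pi_N\colon G\to G/N$ is an analytic principal $N$-bundle, choose an analytic local section $\sigma\colon U\to G$ over an open $U\subset G/N$; composing with the projection $q_H\colon G\to G/H$ gives an analytic section $s:=q_H\circ\sigma$ of $f$ over $U$. I would then set
\[
\Phi_U\colon U\times N/H\longrightarrow f^{-1}(U),\qquad \Phi_U(u,nH)=s(u)\cdot(nH)=\sigma(u)\,nH .
\]
This is a bijection onto $f^{-1}(U)$: surjective because any $gH$ with $gN=u$ satisfies $g=\sigma(u)n$ for some $n\in N$; injective because applying $f$ recovers $u$ and then $\sigma(u)$ cancels. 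It is $N/H$-equivariant by construction, and on an overlap $U\cap V$ the transition map is $u\mapsto \sigma_V(u)^{-1}\sigma_U(u)H$, which lies in $N/H$ because $\sigma_U(u)N=\sigma_V(u)N=u$, and which acts by left translation on $N/H$. Hence $f$ is a principal $N/H$-bundle with the stated right action.

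The one genuinely technical point — the part I expect to need care — is the analyticity of $\Phi_U$ and of $\Phi_U^{-1}$. For $\Phi_U$ one factors it as $U\times N\to G\to G/H$, $(u,n)\mapsto\sigma(u)n\mapsto\sigma(u)nH$, which is analytic and descends along the analytic submersion $\mathrm{id}_U\times(N\to N/H)$; for $\Phi_U^{-1}$ one lifts a point of $f^{-1}(U)$ locally to $G$ through a section of the analytic submersion $q_H$, notes that $\sigma(f(gH))^{-1}g\in N$, and projects to $N/H$. Alternatively, and perhaps more cleanly, one may identify $f\colon G/H\to G/N$ with the fibre bundle associated to $G\to G/N$ through the left translation action of $N$ on the group $N/H$: the map $[g,nH]\mapsto gnH$ is an isomorphism over $G/N$ intertwining the right action $[g,x]\cdot y=[g,xy]$ with $(gH)\cdot(nH)=gnH$, so $G/H\to G/N$ inherits the structure of a principal $N/H$-bundle and the remaining verifications are formal.
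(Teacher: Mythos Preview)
The paper does not give its own proof of this lemma: it is quoted verbatim from the cited reference and used as a black box. Your argument is correct and is exactly the standard one for this fact---check that the formula is a well-defined free right $N/H$-action whose orbits are the fibres of $f$, then manufacture local trivialisations from analytic local sections of the principal $N$-bundle $G\to G/N$ (or, equivalently, identify $G/H\to G/N$ with the associated bundle $G\times_N(N/H)$). There is nothing to compare against here; your proof stands on its own and would serve perfectly well as a replacement for the citation.
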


In particular, $\nu: \Sptnrt/\Sun=\CG/\CH \to  \Sptnrt/\Unt=\CG/\CH\CR  $ is a principal $\CH\CR/\CH$-bundle. Let
\[
\theta: \CG\times \CH\CR/\CH\to \CG\times_{Z(\CG)}\CH\CR/\CH
\]
be the quotient map. 
The group $\CH\CR/\CH\cong \mathbb{R}$ embeds in $ \CG\times_{Z(\CG)}\CH\CR/\CH$ as a central subgroup via the map  $r\CH\mapsto (\mathrm{id}, r\CH)$. We denote this subgroup by 
$
\theta(\{\mathrm{id}\}\times \CH\CR/\CH)
$.

\begin{lem}
The subgroup $ \theta(\{\mathrm{id}\}\times \CH\CR/\CH) \cong \CH\CR/\CH$  acts   on $\Sptnrt/\Sun=\CG/\CH$ as the structure group, i.e. translations along fiber direction. 
\end{lem}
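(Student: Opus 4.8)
The plan is to unwind the two actions in question and observe that they are literally given by the same formula. Recall from Lemma-Definition \ref{lemdef:ghr}(i) that $\CH\CR = \Unt$ is a \emph{direct} product of $\CH = \Sun$ and $\CR$; in particular $\CH$ and $\CR$ commute elementwise, and the composite $\CR \hookrightarrow \CH\CR \to \CH\CR/\CH$ is an isomorphism, so every element of $\CH\CR/\CH$ can be written as a coset $r\CH$ with $r \in \CR$. Thus the central subgroup $\theta(\{\mathrm{id}\}\times\CH\CR/\CH) \subset \CG\times_{Z(\CG)}\CH\CR/\CH$ consists precisely of the classes $\theta(\mathrm{id}, r\CH)$ with $r\in\CR$.

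First I would compute the restriction to this subgroup of the action of Definition \ref{def:extendedact}: for $x\in\CG$ and $r\in\CR$,
\[
\theta(\mathrm{id}, r\CH)\cdot(x\CH) = (\mathrm{id}\cdot x\cdot r)\CH = (xr)\CH .
\]
Next I would write down the structure-group action of $\CH\CR/\CH$ on the principal bundle $\nu:\CG/\CH\to\CG/\CH\CR$ supplied by the Koszul lemma \cite{lglagokv} quoted above: for $x\in\CG$ and $n\in\CH\CR$ one has $(x\CH)(n\CH) = (xn)\CH$. Taking $n = r\in\CR$ gives again $(xr)\CH$. Hence $\theta(\mathrm{id}, r\CH)$ and the structure-group element $r\CH$ act on $\CG/\CH$ by the identical map $x\CH \mapsto (xr)\CH$, which is exactly translation along the fibre of $\nu$ over $x\CH\CR\in\CG/\CH\CR$.

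To finish, I would remark that both parametrizations $r\mapsto\theta(\mathrm{id},r\CH)$ and $r\mapsto r\CH$ run over the full groups $\theta(\{\mathrm{id}\}\times\CH\CR/\CH)$ and $\CH\CR/\CH$, and that they correspond under the isomorphism $\theta(\{\mathrm{id}\}\times\CH\CR/\CH)\cong\CH\CR/\CH$ recorded just before the statement; the displayed equality of maps then says precisely that this isomorphism intertwines the restricted action of Definition \ref{def:extendedact} with the structure-group action, i.e.\ $\theta(\{\mathrm{id}\}\times\CH\CR/\CH)$ acts by fibrewise translations. There is essentially no obstacle here --- the whole argument is a direct comparison of definitions --- the only point needing (minor) care being the use of the direct-product decomposition $\CH\CR = \CH\times\CR$ to see that $\CR$ already surjects onto $\CH\CR/\CH$, so that it suffices to check agreement on cosets $r\CH$ with $r\in\CR$, and that well-definedness on $\CG/\CH$ (already verified after Definition \ref{def:extendedact}) covers both sides.
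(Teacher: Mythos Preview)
Your proof is correct and follows essentially the same approach as the paper: both compute the action of $\theta(\mathrm{id}, r\CH)$ on a coset $x\CH$ via Definition \ref{def:extendedact} to obtain $(xr)\CH$, and observe that this is exactly the structure-group (right) action on the principal $\CH\CR/\CH$-bundle $\nu$. The paper frames it by first describing a fiber $\{sr\CH\mid r\in\CR\}$ and then checking $(\mathrm{id},t\CH)\cdot sr\CH = srt\CH$, while you compare directly with the formula from the Koszul lemma; the content is the same.
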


\begin{proof}
For any point $s\CH\CR \in \CG/\CH\CR, s\in \CG$ in the base space, the fiber over it is 
\[
\{sr\CH\mid r\in \CR\}
\]
which is parametrized by $\CR$. 
A general element $(\mathrm{id}, t\CH)\in  \theta(\{\mathrm{id}\}\times \CH\CR/\CH) , t\in \CR$ sends $sr\CH$ to $srt\CH$. Note that this coincides with the action of the structure group above. 
\end{proof}

\begin{lem}
The action of $(g,r\CH)\in\CG\times_{Z(\CG)}\CH\CR/\CH$ is equivariant with respect to the action of the structure group. Each fiber is sent homeomorphically and equivariantly to some fiber. 
\end{lem}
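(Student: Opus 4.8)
The plan is to reduce the two assertions---equivariance of $\Phi_{(g,r\CH)}$ under the structure group, and the fact that each fiber goes homeomorphically and equivariantly to a fiber---to one coset computation, the structural inputs being exactly those recorded in Lemma-Definition \ref{lemdef:ghr}: that $\CH\CR = \CH\times\CR$ is a direct product (so every element of the structure group $\CH\CR/\CH$ has a representative in $\CR$), that $Z(\CG)\subseteq\CH\CR$, and, above all, that $\CR$ is abelian. Here $\Phi_{(g,r\CH)}$ denotes the diffeomorphism $x\CH\mapsto gxr\CH$ of $\CX=\CG/\CH$ given by Definition \ref{def:extendedact} (with smooth inverse $\Phi_{(g,r\CH)^{-1}}$).

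First I would pin down the induced map on the base. Using $r\in\CR\subseteq\CH\CR$,
\[
\nu\bigl(\Phi_{(g,r\CH)}(x\CH)\bigr) = \nu(gxr\CH) = gxr\,\CH\CR = gx\,\CH\CR,
\]
so $\nu\circ\Phi_{(g,r\CH)} = \bar\Phi_g\circ\nu$ with $\bar\Phi_g(x\CH\CR) = gx\CH\CR$ the ordinary left translation on $\CG/\CH\CR$; this descends to $\CG/Z(\CG)$ because $Z(\CG)\subseteq\CH\CR$ is central, so $(z,\iota(z)^{-1}\CH)$ acts trivially on the base. In particular $\Phi_{(g,r\CH)}$ maps the fiber $\nu^{-1}(x\CH\CR)$ into $\nu^{-1}(gx\CH\CR)$, and since $\Phi_{(g,r\CH)^{-1}}$ maps $\nu^{-1}(gx\CH\CR)$ back into $\nu^{-1}(x\CH\CR)$, the restriction of $\Phi_{(g,r\CH)}$ to each fiber is a bijection with smooth inverse, i.e.\ a diffeomorphism, hence a homeomorphism.

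For the equivariance, recall that the structure group acts on the right of $\CG/\CH$ by $(x\CH)(r'\CH) = xr'\CH$ for $r'\in\CR$. I would compute the two composites:
\[
\Phi_{(g,r\CH)}\bigl((x\CH)(r'\CH)\bigr) = \Phi_{(g,r\CH)}(xr'\CH) = gxr'r\,\CH,
\qquad
\bigl(\Phi_{(g,r\CH)}(x\CH)\bigr)(r'\CH) = (gxr\CH)(r'\CH) = gxrr'\,\CH,
\]
which agree since $r,r'\in\CR$ and $\CR$ is abelian. (Conceptually: $\Phi_{(g,r\CH)}$ is the composite of left translation by $g$, which commutes with any right translation, and right translation by $r$, which commutes with the structure group's right translation because $\CR$ is commutative.) Thus $\Phi_{(g,r\CH)}$ commutes with the structure-group action on all of $\CX$; restricting this identity to a single fiber $\nu^{-1}(x\CH\CR)$ gives that the fiber map is equivariant.

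There is no real obstacle here: the argument is coset bookkeeping, and the one mathematically essential ingredient is the commutativity of $\CR$. The only place that calls for a moment's attention is checking that the reductions modulo $\CH$ are legitimate---that structure-group elements may be represented in $\CR$, and that the descended base action on $\CG/Z(\CG)$ is well defined---both of which are delivered by Lemma-Definition \ref{lemdef:ghr} (together with the well-definedness checks following Definition \ref{def:extendedact}).
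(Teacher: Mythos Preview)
Your proof is correct and follows essentially the same route as the paper's: both arguments show that $\nu(gxr\CH)=gx\CH\CR$ (using $r\in\CH\CR$) so fibers map to fibers, invoke the inverse element to upgrade this to a homeomorphism of fibers, and verify equivariance via the commutativity $rr'=r'r$ in $\CR$. The only cosmetic difference is that the paper first phrases equivariance as ``the structure-group action coincides with that of the central subgroup $\theta(\{\mathrm{id}\}\times\CH\CR/\CH)$, hence commutes with everything,'' before giving the same coset computation you wrote down.
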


\begin{proof}
The first assertion follows from the action of the central subgroup $\theta(\{\mathrm{id}\}\times \CH\CR/\CH)$ described in the lemma above. 

For $a_1\CH, a_2\CH \in \CG/\CH$ in the same fiber, i.e. $a_1\CH\CR=a_2\CH\CR$, consider their images
\[
(g,r\CH)\cdot  a_1\CH=  (ga_1r)\CH, 
\]
\[
(g,r\CH)\cdot  a_2\CH=  (ga_2r)\CH, 
\]
 under the action of $(g,r\CH)\in \CG\times_{Z(\CG)}\CH\CR/\CH$. As $r\in \CR\subset\CH\CR$,  
\[
(ga_1r)\CH\CR=(ga_1)\CH\CR
=(ga_2)\CH\CR
=(ga_2r)\CH\CR, 
\]
and hence the two images $(ga_1r)\CH$ and $(ga_2r)\CH$ are in the same fiber. 

Considering the inverse of $(g,r\CH)\in \CG\times_{Z(\CG)}\CH\CR/\CH$, we see that the action of $(g,r\CH)\in \CG\times_{Z(\CG)}\CH\CR/\CH$ sends an arbitrary fiber homeomorphically onto some image fiber. This homeomorphism is equivariant with respect to the action of the structure group, as for $t\in \CR$, 
\[
(g,r\CH)\cdot  at\CH=  (gatr)\CH=(gart)\CH. 
\]
\end{proof}

It follows that the action induces a bijection from the set of fibers to itself.

\subsection{An exact sequence}
\label{subsec:4.2}

We now prove the claimed exact sequence of groups
\[
0\to \theta(\{\mathrm{id}\}\times \CH\CR/\CH) \to \CG\times_{Z(\CG)}\CH\CR/\CH \to \CG/Z(\CG) \to 0, 
\]
where 
\[
\theta: \CG\times \CH\CR/\CH\to \CG\times_{Z(\CG)}\CH\CR/\CH
\]
is the quotient map,  $\theta(\{\mathrm{id}\}\times \CH\CR/\CH)\cong \mathbb{R}$,  and $\CG/Z(\CG)\cong \Psptnr$. 

\begin{proof}[Proof of Proposition \ref{prop:exactseq}]
Note that the subgroup of $\CG\times \CH\CR/\CH$ generated by $\{\mathrm{id}\}\times \CH\CR/\CH$ and 
$(i,(\iota)^{-1})Z(\CG)$ is the central subgroup $Z(\CG)\times \CH\CR/\CH$. Hence there are isomorphisms
\[
\frac{\CG\times_{Z(\CG)}\CH\CR/\CH}{\theta(\{\mathrm{id}\}\times \CH\CR/\CH)}\cong
\frac{(\CG\times \CH\CR/\CH)/((i,(\iota)^{-1})Z(\CG))}{(Z(\CG)\times \CH\CR/\CH)/((i,(\iota)^{-1})Z(\CG))}\cong
\frac{\CG\times \CH\CR/\CH}{Z(\CG)\times \CH\CR/\CH}\cong
\frac{\CG}{Z(\CG)}\cong
\Psptnr.  
\]
\end{proof}

The fiber-preserving action of $ \CG\times_{Z(\CG)}\CH\CR/\CH$ on $\Sptnrt/\Sun$ descends  to an action  on $\Sptnrt/\Unt$, on which the subgroup $ \theta(\{\mathrm{id}\}\times \CH\CR/\CH) $ acts trivially.  We obtain an action 
\[
\frac{\CG\times_{Z(\CG)}\CH\CR/\CH}{\theta(\{\mathrm{id}\}\times \CH\CR/\CH)}\cong \frac{\CG}{Z(\CG)}\cong
\Psptnr          \curvearrowright           \Sptnrt/\Unt \cong \Sptnr/\Un 
. 
\]

Observe that the usual  action of $\Sptnr$ on $\Sptnr/\Un$ factors through $\Psptnr$. This is because 
\[
-\mathrm{id} \cdot a\Un =a \cdot (-\mathrm{id}) \Un =a\Un. 
\]

\begin{lem}
The descended action 
\[
\frac{\CG\times_{Z(\CG)}\CH\CR/\CH}{\theta(\{\mathrm{id}\}\times \CH\CR/\CH)}
\curvearrowright 
 \Sptnrt/\Unt \cong \Sptnr/\Un 
\]
 coincides  with the usual action of $\Sptnr$ on $\Sptnr/\Un$. 
\label{lem:descendact}
\end{lem}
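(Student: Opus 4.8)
The plan is to unravel both actions explicitly on cosets and check they agree, using the exact sequence from Proposition \ref{prop:exactseq} to identify the domain. First I would recall that the fiber-preserving action of $\CG\times_{Z(\CG)}\CH\CR/\CH$ on $\CG/\CH$ is given by $(g,r\CH)\cdot(x\CH)=(gxr)\CH$ (Definition \ref{def:extendedact}), and that this descends to $\CG/\CH\CR = \Sptnrt/\Unt$ via $(g,r\CH)\cdot(x\CH\CR) = (gxr)\CH\CR = (gx)\CH\CR$, since $r\in\CR\subset\CH\CR$. So on the base, $(g,r\CH)$ acts simply as left multiplication by $g\in\CG$, independent of $r$; this confirms that $\theta(\{\mathrm{id}\}\times\CH\CR/\CH)$ acts trivially and the action factors through the quotient $\CG/Z(\CG)\cong\Psptnr$ identified in Proposition \ref{prop:exactseq}.

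Next I would fix the identification $\Sptnrt/\Unt \cong \Sptnr/\Un$. The universal covering $\pi:\CG=\Sptnrt\to\Sptnr$ sends $\CH\CR=\Unt$ onto $\Un$ with kernel $\ker\pi\subset\CH\CR$, so it induces a diffeomorphism $\bar\pi:\Sptnrt/\Unt \xrightarrow{\sim} \Sptnr/\Un$, $x\CH\CR\mapsto \pi(x)\Un$. Under this diffeomorphism, left multiplication by $g\in\CG$ corresponds to left multiplication by $\pi(g)\in\Sptnr$ on $\Sptnr/\Un$, because $\pi$ is a group homomorphism: $\bar\pi((gx)\CH\CR)=\pi(gx)\Un=\pi(g)\pi(x)\Un$.

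Finally I would match up the two descriptions of the quotient group. The descended action is by $\CG/Z(\CG)$; the usual action of $\Sptnr$ on $\Sptnr/\Un$ factors through $\Psptnr=\Sptnr/\{\pm\id\}$ (as noted just before the lemma, since $-\id\cdot a\Un = a\Un$). I would check that the isomorphism $\CG/Z(\CG)\cong\Psptnr$ of Proposition \ref{prop:exactseq} is exactly the one induced by $\pi$, i.e. that $\pi$ descends to $\bar\pi_0:\CG/Z(\CG)\to\Sptnr/\{\pm\id\}=\Psptnr$, which holds because $\pi(Z(\CG))=\{\pm\id\}$ (Lemma \ref{lem:center}) and $\ker\pi\subset Z(\CG)$. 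Chasing an element $gZ(\CG)\in\CG/Z(\CG)$ through: it acts on $x\CH\CR$ by $x\CH\CR\mapsto (gx)\CH\CR$, which under $\bar\pi$ is $\pi(g)\pi(x)\Un$, i.e. the usual action of $\pi(g)\{\pm\id\}=\bar\pi_0(gZ(\CG))\in\Psptnr$ on $\Sptnr/\Un$. This establishes the claimed coincidence. The only point requiring any care — the main (minor) obstacle — is verifying that the abstract isomorphism $\CG/Z(\CG)\cong\Psptnr$ produced in Proposition \ref{prop:exactseq} is the geometrically natural one induced by $\pi$, rather than some other isomorphism; this is immediate once one notes both are built from $\pi$ by passing to quotients.
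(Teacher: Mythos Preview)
Your proof is correct and follows essentially the same approach as the paper: compute the descended action on $\CG/\CH\CR$ as left multiplication by $g$ (since $r\in\CH\CR$), then transport through the $\pi$-induced identification $\Sptnrt/\Unt\cong\Sptnr/\Un$ to recognize the usual $\Sptnr$-action. Your extra care in verifying that the isomorphism $\CG/Z(\CG)\cong\Psptnr$ from Proposition~\ref{prop:exactseq} is the one induced by $\pi$ is a nice touch the paper leaves implicit.
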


\begin{proof}
Take $a\in \Sptnrt$ and an element (denoted by $(g,r\CH)$) in $\frac{\CG\times_{Z(\CG)}\CH\CR/\CH}{\theta(\{\mathrm{id}\}\times \CH\CR/\CH)}$ represented by $(g,r\CH)\in \CG\times \CH\CR/\CH$. Then $(g,r\CH)$ sends $a\Unt\in \Sptnrt/\Unt$ to $gar\Unt=ga\Unt$. 

Under the isomorphism $\frac{\CG\times_{Z(\CG)}\CH\CR/\CH}{\theta(\{\mathrm{id}\}\times \CH\CR/\CH)}\cong \frac{\CG}{Z(\CG)}$, $(g,r\CH)$ corresponds to the image in $\CG/Z(\CG)$ of $g\in \CG$, which is still denoted by $g\in \CG/Z(\CG)$.

Recall $\pi: \Sptnrt\to \Sptnr$. Under the isomorphism $\Sptnrt/\Unt \cong \Sptnr/\Un $, $a\Unt$ is mapped to  $\pi(a) \Un$, while $ga\Unt$ is mapped to  $\pi(ga) \Un$. 
In this language the original action by  $(g,r\CH)$ is nothing but left multiplication by $g$. 
\end{proof}


\subsection{$\CG\times_{Z(\CG)}\CH\CR/\CH$ as a central extension}

\label{subsec:4.3}

In this subsection, we show that the exact sequence in Proposition \ref{prop:exactseq} is a topological group extension with a cross section. Such nontrivial extensions are classified by the second continuous cohomology, and can be done only to $6$ families of simple Lie groups, including the symplectic family considered in this paper.

A \textbf{topological group extension}, in the sense of \cite[Section~5]{hu}, is a pair $(E,\phi)$ where $E$ is a topological group containing the group $Q$ as a closed normal subgroup and $\phi$ is an open continuous homomorphism of $E$ onto $G$ with the subgroup $Q$ of $E$ as the kernel. The data is encoded in the exact sequence
\[
0\to Q \to E \stackrel{\phi}{\to} G\to 0. 
\]
A \textbf{cross section} of this topological group extension is a continuous map $u:G\to E$ such that $\phi u(x)=x$ for any $x\in G$.

It is proved in \cite{hu} (see also the survey \cite{stasheff}) that the second continuous cohomology $H^2_c(G;A)$ of a topological group $G$ with coefficients in a continuous $G$-module $A$, classifies topologically split group extensions, i.e. topological group extensions such that, considered as a principal bundle, the bundle is trivial.

\begin{prop}
The exact sequence in Proposition \ref{prop:exactseq} is a topological group extension with a cross section. 
\label{prop:topgrpext}
\end{prop}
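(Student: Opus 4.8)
The plan is to exhibit the exact sequence in Proposition~\ref{prop:exactseq} as a topological group extension in the sense of \cite[Section~5]{hu} and then to produce a continuous cross section. First I would check the topological group extension axioms: the kernel $\theta(\{\mathrm{id}\}\times\CH\CR/\CH)\cong\BR$ is a closed normal subgroup of $\CG\times_{Z(\CG)}\CH\CR/\CH$ (it is the image of a closed subgroup under the quotient by a discrete central subgroup, hence closed, and it is central by construction, hence normal), and $\eta$ is a continuous surjective homomorphism onto $\CG/Z(\CG)\cong\Psptnr$ whose kernel is exactly this $\BR$. That $\eta$ is open follows because $\CG\times_{Z(\CG)}\CH\CR/\CH$ is a Lie group, $\eta$ is a smooth surjective homomorphism between Lie groups, hence a submersion, hence open; alternatively it is the composite of the open quotient maps $\CG\times\CH\CR/\CH\to\CG\times_{Z(\CG)}\CH\CR/\CH$ and $\CG\times_{Z(\CG)}\CH\CR/\CH\to(\CG\times_{Z(\CG)}\CH\CR/\CH)/\theta(\{\mathrm{id}\}\times\CH\CR/\CH)$ up to the isomorphism of Proposition~\ref{prop:exactseq}. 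This establishes $(E,\eta)$ with $E=\CG\times_{Z(\CG)}\CH\CR/\CH$, $Q=\Psptnr$ as a topological group extension.

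Next I would construct the cross section. The cleanest route is to invoke the observation already made in the introduction: regarded as a principal bundle, the extension is a principal $\BR$-bundle over $\Psptnr$, and since $\BR$ is contractible its classifying space is a point, so every principal $\BR$-bundle is trivial; a global trivialization of $\eta$ as a bundle is the same thing as a continuous section $u:\Psptnr\to E$ with $\eta\circ u=\mathrm{id}$. Concretely, one may note that $\Psptnr$ is homotopy equivalent to its maximal compact $\Un/\{\pm\id\}$ and deformation retracts onto it, while more to the point $\Psptnr$ is diffeomorphic to the product of $\Un/\{\pm\id\}$ with a Euclidean space (Iwasawa / Cartan decomposition), so it is in particular paracompact and a CW complex, which is all one needs for the bundle-theoretic vanishing argument: a principal bundle with contractible fiber over a paracompact base (or: a fiber bundle with contractible fiber) admits a global section. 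This gives the continuous $u$ with $\eta u(x)=x$ for all $x\in\Psptnr$, completing the proof.

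I expect the main obstacle to be purely expository: making sure the chosen existence-of-section argument is stated at the right level of generality so that it genuinely applies — i.e. that $\Psptnr$ is paracompact (it is, being a manifold) and that ``principal $\BR$-bundle'' is the correct reading of the extension as a bundle (it is, since $\BR$ acts freely, properly, and transitively on the fibers of $\eta$ by the structure-group action described in Subsection~\ref{subsec:4.2}, with quotient $\Psptnr$). A secondary point worth spelling out, rather than a genuine difficulty, is that the section produced is merely continuous, not a group homomorphism — which is exactly what the definition of \cite{hu} asks for, and is why the extension, though topologically split, may be a nontrivial group extension, classified by the nonzero class in $H^2_c(\Psptnr;\BR)$ discussed after \cite[Lemme~1]{seconddegcontcoh}. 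No genuinely hard estimate or construction is needed; the content is the identification of the relevant standard facts.
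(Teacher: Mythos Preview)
Your proposal is correct and follows essentially the same route as the paper: verify that the kernel is a closed central subgroup and that $\eta$ is an open continuous surjection (the paper cites \cite[Lemma~21.1]{lee} for openness, you argue via submersion), and then obtain the cross section by noting that a principal $\BR$-bundle is trivial because $B\BR$ is a point. Your added remarks on paracompactness and on the distinction between a continuous section and a group-homomorphic splitting are sound but not needed beyond what the paper records.
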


\begin{proof}
By the proof of 
Proposition \ref{prop:exactseq} above,   the exact sequence 
\[
0\to 
\BR \to \CG\times_{Z(\CG)}\CH\CR/\CH \stackrel{\eta}{\to} \CG/Z(\CG) \to 0, 
\]
is identified with 
\[
0\to \frac{Z(\CG)\times \CH\CR/\CH}{(i,(\iota)^{-1})Z(\CG)} \to \frac{\CG\times \CH\CR/\CH}{(i,(\iota)^{-1})Z(\CG)} \to \frac{\CG\times \CH\CR/\CH}{Z(\CG)\times \CH\CR/\CH} \to 0. 
\]

Here every group is given a Lie group structure (see \cite[Theorem~21.26]{lee}; the subgroup $(i,(\iota)^{-1})Z(\CG)$ is discrete and hence closed).  The subgroup $ \frac{Z(\CG)\times \CH\CR/\CH}{(i,(\iota)^{-1})Z(\CG)}$ of $\frac{\CG\times \CH\CR/\CH}{(i,(\iota)^{-1})Z(\CG)}$ is central, and is closed since  $Z(\CG)\times \CH\CR/\CH \subset \CG\times \CH\CR/\CH$ is closed. The quotient homomorphism $\frac{\CG\times \CH\CR/\CH}{(i,(\iota)^{-1})Z(\CG)} \to \frac{\CG\times \CH\CR/\CH}{Z(\CG)\times \CH\CR/\CH}$ is a continuous surjection, and openness follows from \cite[Lemma~21.1]{lee}. In conclusion, this exact sequence is a topological group extension. 

Topologically, considered as a principal $\BR$-bundle, this bundle is trivial. This is because the classifying space $B\BR$ is a point $\{*\}$, while the universal principal $\BR$-bundle is just $\BR\to \{*\}$. 
\end{proof}

For   a connected, non-compact, simple Lie group $G$ with finite center, $H^2_c(G;\BR)\neq 0$ if and only if $G$ is of Hermitian type (\cite[Lemme~1]{seconddegcontcoh}, see also \cite{thirddegcontcoh}). As a consequence,  the only possibilities  admitting (nontrivial) topologically split group extensions by $\BR$ (where the action by $G$ is trivial), are the $6$ families listed after \cite[Lemme~1]{seconddegcontcoh}, and we are performing extensions to one of these families. See also Appendices C.3 and C.4 of \cite{knapp}.  

\section{Volume formula}
\label{sec:5}

In this section, the volume formula of {Siegel--Seifert} (Definition \ref{def:siegelseifert}) closed manifolds is given.

Goetz's result \cite{goetz} summarized in Subsection \ref{subsec:5.2} shows that the invariant measure in our model is given by a ``product measure''  of the base and the fiber. After fixing such an invariant volume form, using the Chern--Gauss--Bonnet theorem \cite{cherngaussbonnet}, we derive the following volume formula (Theorem \ref{thm:vol}): 
For Siegel--Seifert subgroup $\Gamma \subset  \CG\times_{Z(\CG)}\CH\CR/\CH$, the volume of $\Gamma\backslash \CX$ is given by 
\[
\vol(\Gamma\backslash \CX)=\pm\vol((\Gamma\cap\ker\eta)\backslash  (\CH\CR/\CH)    )\cdot \chi(\eta(\Gamma)). 
\]

In particular, for Siegel--Seifert subgroup $\Gamma$ {arising from} (defined in Definition \ref{def:arisesfrom})  $\Psptnr$, the volume of $\Gamma\backslash \CX$ is given by 
\[
\vol(\Gamma\backslash \CX)=\pm \chi(\eta(\Gamma)). 
\]

\subsection{Siegel--Seifert subgroups}
\label{subsec:5.1}

Recall the definition of Siegel--Seifert subgroups (Definition \ref{def:siegelseifert}):

Let $\eta:  \CG\times_{Z(\CG)}\CH\CR/\CH \to \CG/Z(\CG)$ be the map in the exact sequence
\[
0\to \theta(\{\mathrm{id}\}\times \CH\CR/\CH) \to \CG\times_{Z(\CG)}\CH\CR/\CH \to \CG/Z(\CG) \to 0
\]
in Proposition \ref{prop:exactseq}. 

For any subgroup $\Gamma \subset  \CG\times_{Z(\CG)}\CH\CR/\CH$, there is an exact sequence
\[
0\to \Gamma\cap\ker \eta   \to \Gamma   \to \eta(\Gamma)\to 0. 
\]

A subgroup $\Gamma \subset  \CG\times_{Z(\CG)}\CH\CR/\CH$ is \textbf{Siegel--Seifert} if

(i) $\Gamma \subset  \CG\times_{Z(\CG)}\CH\CR/\CH$ is discrete and torsion-free, and the action $\Gamma\curvearrowright 
\Sptnrt/\Sun=\CX$ is cocompact and properly discontinuous, 

(ii) $\eta(\Gamma) \subset \CG/Z(\CG) $ is discrete and torsion-free, and the action $\eta(\Gamma)\curvearrowright \Sptnrt/\Unt =\fH_n$ is cocompact and properly discontinuous, and

(iii) $\Gamma\cap\ker\eta$ is infinite cyclic.

The corresponding manifold $\Gamma\backslash \mathcal{X}$ is called a \textbf{Siegel--Seifert} manifold. 
For a Siegel--Seifert manifold $\Gamma\backslash \mathcal{X}$, as $\Gamma\cap\ker\eta$ is infinite cyclic, the foliation on $\mathcal{X}$ into lines descends to a foliation into circles on $\Gamma\backslash \mathcal{X}$, fibering over the base manifold $\eta(\Gamma)\backslash\fH_n$.

These conditions imply that the action $\Gamma\curvearrowright 
\Sptnrt/\Sun=\CX$ and the action $\eta(\Gamma)\curvearrowright \Sptnrt/\Unt =\fH_n$ are both free. 
For concrete computations, we may also take into considerations the groups whose action is not free, or is just cofinite.

Pick a fundamental domain $D\subset \Sptnrt/\Unt $ for $\eta(\Gamma)$. Recall $\nu: \Sptnrt/\Sun \to  \Sptnrt/\Unt  $. 

\begin{prop}
A fundamental domain for Siegel--Seifert $\Gamma$ is given by a fundamental domain in $\nu^{-1}(D)$ for the action of $\Gamma\cap\ker\eta$. 
\label{prop:funddom}
\end{prop}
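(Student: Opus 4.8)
\emph{Plan.} The statement is an instance of the general ``iterated fundamental domain'' principle applied to the exact sequence
$0\to \Gamma\cap\ker\eta \to \Gamma \to \eta(\Gamma)\to 0$,
using that $\nu\colon \CX\to \fH_n$ is $\Gamma$-equivariant with $\eta(\Gamma)$ acting on the base (Lemma \ref{lem:descendact}) and that $\Gamma\cap\ker\eta$ acts by translations along the fibers. First I would check that $\Gamma\cap\ker\eta$ preserves $\nu^{-1}(D)$: by Proposition \ref{prop:exactseq}, $\ker\eta=\theta(\{\id\}\times \CH\CR/\CH)$ acts on $\CX$ as the structure group of the principal $\CH\CR/\CH$-bundle $\nu$, i.e.\ fiberwise, so every element of $\Gamma\cap\ker\eta$ fixes each fiber setwise and hence maps $\nu^{-1}(D)$ to itself. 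Since $\Gamma\cap\ker\eta$ is infinite cyclic and, by Definition \ref{def:seifertlike}, acts freely and properly discontinuously, it has a fundamental domain $F\subset \nu^{-1}(D)$; concretely, restricting the global trivialization of $\nu$ to $D$ gives $\nu^{-1}(D)\cong D\times\BR$ in which $\Gamma\cap\ker\eta$ acts by translation in the $\BR$-factor through a subgroup $\lambda\BZ$, so one may take $F=D\times[0,\lambda)$. I then claim $F$ is a fundamental domain for $\Gamma\curvearrowright\CX$.

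For the covering property: given $y\in\CX$, choose $\bar\gamma\in\eta(\Gamma)$ with $\bar\gamma\cdot\nu(y)\in D$, possible since $D$ is a fundamental domain for $\eta(\Gamma)\curvearrowright\fH_n$, and lift $\bar\gamma$ to some $\gamma_1\in\Gamma$. By equivariance of $\nu$ and Lemma \ref{lem:descendact}, $\nu(\gamma_1\cdot y)=\eta(\gamma_1)\cdot\nu(y)=\bar\gamma\cdot\nu(y)\in D$, so $\gamma_1\cdot y\in\nu^{-1}(D)$; then pick $\gamma_2\in\Gamma\cap\ker\eta$ with $\gamma_2\gamma_1\cdot y\in F$, whence $y\in(\gamma_2\gamma_1)^{-1}F$. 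Thus $\bigcup_{\gamma\in\Gamma}\gamma F=\CX$. For essential disjointness: suppose $p\in F^\circ$ and $\gamma\cdot p\in F^\circ$ for some $\gamma\in\Gamma$. With $F=D\times[0,\lambda)$ one has $\nu(F^\circ)=D^\circ$, so $\nu(p)$ and $\eta(\gamma)\cdot\nu(p)=\nu(\gamma\cdot p)$ both lie in $D^\circ$; since $D$ is a fundamental domain for the free, properly discontinuous action $\eta(\Gamma)\curvearrowright\fH_n$ (so distinct $\eta(\Gamma)$-translates of $D$ have disjoint interiors), this forces $\eta(\gamma)=\id$, i.e.\ $\gamma\in\Gamma\cap\ker\eta$. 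But then $p$ and $\gamma\cdot p$ both lie in $F^\circ=D^\circ\times(0,\lambda)$, and since $\gamma$ acts by a translation in $\lambda\BZ$ this is only possible for the trivial translation, so $\gamma=\id$. Hence distinct $\Gamma$-translates of $F$ have disjoint interiors, and $F$ is a fundamental domain for $\Gamma\curvearrowright\CX$.

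The step requiring the most care is the bookkeeping on the boundaries $\partial D$ and $\partial F$: one must fix a convention for ``fundamental domain'' (a strict one, with pairwise-disjoint interiors whose closures cover $\CX$, versus a measurable transversal meeting each orbit exactly once), choose $D$ to be a fundamental domain whose boundary is negligible (for instance a Dirichlet domain for $\eta(\Gamma)$), and choose $F$ adapted to it so that $\partial F\subset \nu^{-1}(\partial D)\cup(D\times\partial[0,\lambda))$ remains of measure zero under the invariant volume form of Conventions \ref{conv:volr} and \ref{conv:voleu}. For the volume computation in Theorem \ref{thm:vol} a measure-theoretic fundamental domain is all that is needed, so these boundary issues are harmless; I would phrase the proposition in that form, with the strict topological version noted as a remark. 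The remaining verifications (that $\nu$ is equivariant, that the trivialization intertwines the structure-group action with a genuine $\BR$-translation, and that all relevant actions are free and properly discontinuous) are immediate from Section \ref{sec:4} and Definition \ref{def:seifertlike}.
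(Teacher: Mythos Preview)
Your argument is correct and follows essentially the same ``iterated fundamental domain'' approach as the paper: the paper's proof simply partitions $\Gamma$ into cosets of $\Gamma\cap\ker\eta$, observes that the kernel coset fills $\nu^{-1}(D)$ while the other cosets move $\nu^{-1}(D)$ onto the $\nu$-preimages of the other $\eta(\Gamma)$-translates of $D$, which is exactly your lift-then-adjust argument written more tersely. Your additional care with boundaries and the explicit trivialization $\nu^{-1}(D)\cong D\times\BR$ goes beyond what the paper records but is entirely in keeping with its intent.
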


\begin{proof}
The group $\Gamma$ is partitioned into cosets $a(\Gamma\cap\ker\eta)$. The coset $\Gamma\cap\ker\eta$ acting on the claimed region fills $\nu^{-1}(D)$, while other cosets move $\nu^{-1}(D)$ to the inverse image of other copies of $D$ under $\nu$.  
\end{proof}

\subsection{Product measure}
\label{subsec:5.2}

This subsection is based on a paper \cite{goetz} by A. Goetz. It is proved here  that  the invariant measure in our model is given by a ``product measure''  of the base and the fiber.

Let $G$ be a Lie group with left-invariant Haar measure $\nu_G$ given by a top-dimensional left-invariant form $\omega_G$. Consider a principal $G$-bundle $B$ over base manifold $X$. 

Take a fiber $Y_x\curvearrowleft G$ over $x\in X$. Given a point $a\in Y_x$, there is a bijection $Y_x\to G, ag\mapsto g$, inducing from $\nu_G$ a measure on $Y_x$. For a different choice of point $ah\in Y_x$, the induced bijection is now $Y_x\to G, ar\mapsto h^{-1}r$, which gives the same measure on $Y_x$ since $\nu_G$ is left-invariant. Hence, on each fiber $Y_x$,  there is a measure $\nu_x$ induced from the left-invariant measure on $G$, which is actually given by a well-defined differential form. 

Take a measure $\mu$ represented by a form $\omega_\mu$ on $X$. For each trivialization $\psi_j:V_j\times G \to U\subset B$ of $B$ as a principal $G$-bundle, consider the product measure $\mu|_{V_j}\times \nu_G$ on $V_j\times G$ represented by the differential form $\pi_1^*\omega_\mu\wedge \pi_2^* \omega_G$, where $\pi_1, \pi_2$ are projections to factors. The diffeomorphism $\psi_j$ sends $\pi_1^*\omega_\mu\wedge \pi_2^* \omega_G$ to a form defined on $U\subset B$. These locally defined forms are compatible and patch together to give a globally defined form, and hence a \textbf{product measure} $\lambda$  on $B$. For any  Borel subset $Z \subset B$, setting $Z_x=Z\cap Y_x$, the product measure is given by 
\[
\lambda\left(Z\right)=\int_{{X}}\nu_{{x}}\left(Z_{{x}}\right)d\mu\left(x\right). 
\]

Consider a bundle map $h:B\to B'$ (between principal $G$-bundles) covering a diffeomorphism $\bar{h}:X\to X'$. Let $\mu'=\bar{h}_* \mu$ be the pushforward measure on $X'$. For any Borel set $Z'\subset B'$, 
\[
\lambda(h^{-1}(Z^{\prime}))=\int_X\nu_x([h^{-1}(Z^{\prime})]_x)d\mu(x)=\int_{X^{\prime}}\nu_{x^{\prime}}(Z_{x^{\prime}}^{\prime})d\mu^{\prime}(x^{\prime})=\lambda'(Z'). 
\]
In particular, 
\begin{lem}[\cite{goetz}]
If $h$ is a bundle map from  a principal $G$-bundle $B$ to itself and $\bar{h}$ preserves the measure $\mu$ on $X$, then $h$ preserves the product measure $\lambda$. 
\label{lem:productmeasure}
\end{lem}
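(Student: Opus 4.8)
The plan is to derive the lemma as the special case $B'=B$ of the transformation rule for the product measure established in the computation immediately preceding the statement. First I would recall that a bundle map $h\colon B\to B$ of principal $G$-bundles covering a diffeomorphism $\bar h\colon X\to X$ is, by definition, $G$-equivariant, so for each $x\in X$ it restricts to a $G$-equivariant bijection $h_x\colon Y_x\to Y_{\bar h(x)}$ of fibers. Since $\nu_x$ is the measure on $Y_x$ induced from the left-invariant Haar measure $\nu_G$ via any choice of base point $a\in Y_x$ (independent of that choice, as already noted), a $G$-equivariant bijection of fibers pushes $\nu_x$ forward to $\nu_{\bar h(x)}$: the two identifications $Y_x\cong G\cong Y_{\bar h(x)}$ determined by $a$ and by $h_x(a)$ intertwine $h_x$ with the identity map of $G$.

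Next I would slice. For a Borel set $Z\subset B$ put $Z_x=Z\cap Y_x$; then $[h^{-1}(Z)]_x=h_x^{-1}(Z_{\bar h(x)})$, and by the previous step $\nu_x([h^{-1}(Z)]_x)=\nu_{\bar h(x)}(Z_{\bar h(x)})$. Integrating against $\mu$ and changing variables by the diffeomorphism $\bar h$ gives
\[
\lambda(h^{-1}(Z))=\int_X \nu_x\big([h^{-1}(Z)]_x\big)\,d\mu(x)=\int_X \nu_{\bar h(x)}(Z_{\bar h(x)})\,d\mu(x)=\int_X \nu_{x'}(Z_{x'})\,d(\bar h_*\mu)(x'),
\]
which is exactly the displayed identity with $B'=B$ and $\mu'=\bar h_*\mu$. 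Invoking the hypothesis $\bar h_*\mu=\mu$, the right-hand side equals $\int_X \nu_{x'}(Z_{x'})\,d\mu(x')=\lambda(Z)$. Hence $\lambda(h^{-1}(Z))=\lambda(Z)$ for every Borel $Z\subset B$, i.e.\ $h_*\lambda=\lambda$, as claimed.

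There is no serious obstacle here: the content of the lemma is already contained in the general transformation formula above, and the proof is a matter of setting $B'=B$ and using $\bar h_*\mu=\mu$. The one point I would state carefully rather than leave implicit is the equivariance claim $(h_x)_*\nu_x=\nu_{\bar h(x)}$, since it is what makes the middle equality of the display valid; and one should note that ``bundle map to itself'' already entails that the covered map $\bar h$ is a diffeomorphism, so that the change of variables and the pushforward $\bar h_*\mu$ are meaningful.
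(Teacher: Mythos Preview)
Your proposal is correct and follows essentially the same approach as the paper: the lemma is stated there as an immediate corollary (``In particular'') of the displayed transformation formula $\lambda(h^{-1}(Z'))=\lambda'(Z')$ for bundle maps $h:B\to B'$, specialized to $B'=B$ and $\mu'=\bar h_*\mu=\mu$. Your write-up simply unpacks the middle equality of that display (the fiberwise equivariance $(h_x)_*\nu_x=\nu_{\bar h(x)}$) more explicitly than the paper does, which is fine.
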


\subsection{Invariant measure on $\CX$}
\label{subsec:5.3}

It follows from Lemma \ref{lem:productmeasure} and Lemma \ref{lem:descendact} that once an invariant measure on $\Sptnr\curvearrowright\Sptnr/\Un$ and a constant multiple of the standard measure on $\CH\CR/\CH\cong\BR$ are fixed, an invariant measure on $\CG\times_{Z(\CG)}\CH\CR/\CH\curvearrowright \CX$ is obtained. In this subsection, we fix the normalizations and obtain a volume formula.

Consider first $\CH\CR/\CH\cong\BR$.


\begin{defn}
We say that a subgroup $\Gamma\subset \CG\times_{Z(\CG)}\CH\CR/\CH$ \textbf{arises from} (a subgroup $L$ of) $\Psptnr$, denoted as $\Gamma=L^\uparrow$, if $\Gamma$ is constructed from $L$ as follows: For any subgroup $L\subset \Psptnr$, take its inverse image $L'\subset \Sptnrt$, send it to $L''\subset 
\CG\times \CH\CR/\CH$ and then to $L'''\subset \CG\times_{Z(\CG)}\CH\CR/\CH$ via the quotient map. Set $\Gamma=L^\uparrow:=L'''$. 
\label{def:arisesfrom}
\end{defn}

Note that $\eta(\Gamma)=L$. 

\begin{lem}
For Siegel--Seifert $\Gamma$ arising from $\Psptnr$ as above, $\Gamma \cap \ker \eta=\theta(Z(\CG))$ is an infinite cyclic subgroup of $\ker \eta \cong   \CH\CR/\CH     \cong \BR$
, where $\theta: \CG\times \CH\CR/\CH\to \CG\times_{Z(\CG)}\CH\CR/\CH$,  and subgroups of $\CG$ are also considered as subgroups of $ \CG\times \CH\CR/\CH$. 
\end{lem}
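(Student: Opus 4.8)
The plan is to unwind Definition~\ref{def:arisesfrom} and then trace $\ker\eta$ through the quotient map $\theta$; the identification $\Gamma\cap\ker\eta=\theta(Z(\CG))$ will then fall out, and the cyclicity can be read off from Lemma~\ref{lem:center}. First I would make "arising from $\Psptnr$" explicit: writing $L'\subset\CG$ for the full preimage of $L$ under the quotient homomorphism $\CG\to\CG/Z(\CG)$, Definition~\ref{def:arisesfrom} gives $\Gamma=\theta(L'\times\{\CH\})$, where $L'\times\{\CH\}$ denotes $L'$ regarded inside $\CG\times\CH\CR/\CH$ via $g\mapsto(g,\CH)$ (this is exactly the identification alluded to in the statement, under which $\theta(Z(\CG))$ means $\theta(Z(\CG)\times\{\CH\})$). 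The crucial, essentially trivial point is that $Z(\CG)=\ker(\CG\to\CG/Z(\CG))\subseteq L'$ because $L$ contains the identity; this is what makes the answer independent of the choice of $L$.

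Next I would pin down $\ker\eta$ at the level of representatives in $\CG\times\CH\CR/\CH$. From the proof of Proposition~\ref{prop:exactseq}, $\eta$ is the composite
\[
\CG\times_{Z(\CG)}\CH\CR/\CH\twoheadrightarrow\frac{\CG\times_{Z(\CG)}\CH\CR/\CH}{\theta(\{\mathrm{id}\}\times\CH\CR/\CH)}\;\cong\;\frac{\CG\times\CH\CR/\CH}{Z(\CG)\times\CH\CR/\CH}\;\cong\;\CG/Z(\CG),
\]
and there the subgroup of $\CG\times\CH\CR/\CH$ generated by $\{\mathrm{id}\}\times\CH\CR/\CH$ together with $\ker\theta=(i,(\iota)^{-1})Z(\CG)$ is exactly $Z(\CG)\times\CH\CR/\CH$. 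Hence $\eta(\theta(g,x\CH))$ is the image of $g$ in $\CG/Z(\CG)$ (I would check in passing that this is genuinely well defined, i.e.\ unchanged under the defining relations of $\CG\times_{Z(\CG)}\CH\CR/\CH$), so that $\theta(g,x\CH)\in\ker\eta$ if and only if $g\in Z(\CG)$.

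Putting the pieces together: any $y\in\Gamma\cap\ker\eta$ can be written $y=\theta(g,\CH)$ with $g\in L'$, and then $y\in\ker\eta$ forces $g\in Z(\CG)$, giving $y\in\theta(Z(\CG)\times\{\CH\})$; conversely $Z(\CG)\subseteq L'$ and $Z(\CG)\subseteq\ker\eta$ give the reverse inclusion, so $\Gamma\cap\ker\eta=\theta(Z(\CG))$. For the cyclicity I would either quote condition~(iii) of Definition~\ref{def:seifertlike} (part of the hypothesis that $\Gamma$ is Seifert-like), or argue directly that $\theta|_{Z(\CG)\times\{\CH\}}$ has kernel $\{(z,\CH):z\in\ker\iota\}$, whence $\theta(Z(\CG))\cong Z(\CG)/\ker\iota\cong\iota(Z(\CG))$, and then invoke Lemma~\ref{lem:center}: for $n$ odd $\iota$ is injective with $Z(\CG)\cong\BZ$, and for $n$ even $Z(\CG)\cong\BZ\oplus\BZ/2\BZ$ with $\ker\iota=Z(\CG)\cap\CH\cong\BZ/2\BZ$ its torsion subgroup, so $\theta(Z(\CG))\cong\BZ$ in both cases; finally $\ker\eta=\theta(\{\mathrm{id}\}\times\CH\CR/\CH)\cong\CH\CR/\CH\cong\BR$ is already known from Proposition~\ref{prop:exactseq}.

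I expect the only real friction to be the bookkeeping with the two quotients in the second paragraph — carefully justifying the formula $\eta\circ\theta\colon(g,x\CH)\mapsto gZ(\CG)$ and that it does not see the $\CH\CR/\CH$ coordinate. Once that is in place, the remaining inclusions are formal, and the cyclicity is a direct consequence of the structure of $Z(\CG)$ described in Lemma~\ref{lem:center}.
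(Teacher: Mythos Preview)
Your proposal is correct and, for the identification $\Gamma\cap\ker\eta=\theta(Z(\CG))$, follows essentially the paper's argument: both unwind $\theta^{-1}(\ker\eta)=Z(\CG)\times\CH\CR/\CH$ and intersect with $L''=L'\times\{\CH\}$, the only difference being that the paper writes this as $\theta(L'')\cap\theta(Z(\CG)\times\CH\CR/\CH)=\theta(L''\cap(Z(\CG)\times\CH\CR/\CH))$ (using that the second set contains $\ker\theta$) while you argue elementwise.

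For the cyclicity, your structural computation $\theta(Z(\CG))\cong Z(\CG)/\ker\iota$ together with Lemma~\ref{lem:center} is a somewhat different route from the paper's, and arguably a more self-contained one. The paper instead exhibits a single explicit nontrivial element: it takes $g_n\in\CR\cap Z(\CG)$ (the $n$-th lift of $\mathrm{id}$) and shows $\theta((g_n,\mathrm{id}))=\theta((\mathrm{id},g_n))\neq\mathrm{id}$ in $\ker\eta\cong\BR$. This establishes nontriviality, and infinite cyclicity then follows implicitly from the fact that $\theta(Z(\CG))$ is a torsion-free (being inside $\BR$) quotient of the rank-one group $Z(\CG)$. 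Your approach makes that last step explicit by identifying the kernel of $\theta$ restricted to $Z(\CG)$ with $\ker\iota$, which is cleaner bookkeeping; the paper's approach has the small advantage of pinpointing a concrete generator (relevant for the normalization in Convention~\ref{conv:volr}). Your alternative of simply quoting condition~(iii) of Definition~\ref{def:seifertlike} is formally valid given the hypothesis, but would strip the lemma of its real content.
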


\begin{proof}
Since $Z(\CG)\times \CH\CR/\CH=\theta^{-1}(\ker \eta)$, 
\[
\Gamma \cap \ker \eta=\theta(L'')\cap \theta(Z(\CG)\times \CH\CR/\CH)=\theta(L''\cap (Z(\CG)\times \CH\CR/\CH))=\theta(Z(\CG)). 
\]
To show that $\theta(Z(\CG))$ is infinite cyclic, take the $n$-th lift $g_n$ of $\id$ (see the proof of Lemma-Definition \ref{lemdef:ghr}), which is in both $\CR$ and $Z(\CG)$, and is not the identity. Consider $\theta((g_n,g_n^{-1}))=\id\in\CG\times_{Z(\CG)}\CH\CR/\CH$ and $\theta((g_n,\id))\in \theta(Z(\CG))\subset\ker \eta \cong   \CH\CR/\CH     \cong \BR$. We have
\[
\theta((g_n,\id))=\theta((g_n,\id))\theta((g_n^{-1},g_n))=\theta((g_n,\id)(g_n^{-1},g_n))
=\theta((\id,g_n))\neq \id. 
\]
\end{proof}

\begin{conv}
The infinite cyclic subgroup $\theta(Z(\CG))$ in the lemma above has covolume $1$. 
\label{conv:volr}
\end{conv}

Now we turn to $\Sptnr\to\Psptnr\curvearrowright\Sptnr/\Un=\fH_n$. 

Take an invariant Riemannian metric on $\Sptnr/\Un=\fH_n$. By the well-known Chern--Gauss--Bonnet theorem \cite{cherngaussbonnet}, there is a differential form $\mathrm{Eu}$ constructed from the Pfaffian of the curvature form, satisfying the following property: for Siegel--Seifert $\Gamma$, the integration of $\mathrm{Eu}$ over the  closed orientable manifold $\eta(\Gamma)\backslash\fH_n$ is given by 
\[
\int_{\eta(\Gamma)\backslash\fH_n} \mathrm{Eu}=\chi(\eta(\Gamma)\backslash\fH_n), 
\]
the Euler characteristic of $\eta(\Gamma)\backslash\fH_n$.

\begin{conv}
Take $\pm\mathrm{Eu}$ to be the invariant volume form on $\Sptnr/\Un=\fH_n$. The  sign is introduced to make the resulting volume positive. 
\label{conv:voleu}
\end{conv}

As $\eta(\Gamma)\backslash\fH_n$ is an Eilenberg--Maclane space, we have
\[
\chi(\eta(\Gamma)\backslash\fH_n)=\chi(\eta(\Gamma)). 
\]

By Proposition \ref{prop:funddom} and the conventions above, we can prove Theorem \ref{thm:vol}.  

\begin{proof}[Proof of Theorem \ref{thm:vol}]
For any Siegel--Seifert subgroup $\Gamma \subset  \CG\times_{Z(\CG)}\CH\CR/\CH$, by Proposition \ref{prop:funddom}, a fundamental domain is given by a cylinder-shaped subset. More precisely, 
given any fundamental domain $D\subset \Sptnrt/\Unt $ for $\eta(\Gamma)$ action, a fundamental domain for Siegel--Seifert $\Gamma$ is given by a segment of the infinite cylinder $\nu^{-1}(D)$, whose length is determined by  the action of $\Gamma\cap\ker\eta$. 

After fixing conventions for the measures, by the defining formula for product measure in Subsection \ref{subsec:5.2}, the volume of the cylinder-shaped fundamental domain is computed in two steps: measure the length of the intersection of the fundamental domain and each fiber, obtaining a function on the base space which turns out to be a constant multiple of the characteristic function of $D$; next, integrate this function over the base space. The resulting volume is as stated in the theorem. 
\end{proof}

%
%
%
%
%

\begin{cor}
For Siegel--Seifert subgroups $\Gamma_i$ arising from $\Psptnr$, the volumes of $\Gamma_i\backslash \CX$ are rationally related. 
\end{cor}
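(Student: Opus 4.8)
The plan is to read the corollary off from Theorem~\ref{thm:vol}. Each $\Gamma_i$ arises from a subgroup $L_i\subset\Psptnr$ in the sense of Definition~\ref{def:arisesfrom}, so $\eta(\Gamma_i)=L_i$, and Theorem~\ref{thm:vol} gives
\[
\vol(\Gamma_i\backslash\CX)=\pm\chi(\eta(\Gamma_i))=\pm\chi(L_i).
\]
Hence it suffices to show that the Euler characteristics $\chi(L_i)$ are pairwise rationally related, and the only real task is to identify these numbers.

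First I would use that $\Gamma_i$ is Seifert-like: by clause~(ii) of Definition~\ref{def:seifertlike} and the remark following it in Subsection~\ref{subsec:5.1}, $L_i=\eta(\Gamma_i)$ is discrete, torsion-free, and acts freely, cocompactly and properly discontinuously on $\fH_n$. Thus $L_i\backslash\fH_n$ is a closed aspherical manifold with fundamental group $L_i$, so it is a finite $K(L_i,1)$ and, by the definition of the Euler characteristic of a torsion-free group (Chapter~IX of \cite{gtm87}) together with the Eilenberg--MacLane identity $\chi(L_i)=\chi(L_i\backslash\fH_n)$ recorded near Theorem~\ref{thm:vol},
\[
\chi(L_i)=\chi(L_i\backslash\fH_n)\in\BZ.
\]
Combining this with the formula above, every $\vol(\Gamma_i\backslash\CX)$ is an integer up to sign, so any two of them have rational ratio; that is the corollary.

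The argument has no genuine obstacle. The one point deserving a word of care is that these integers are nonzero, so that the statement is not vacuous: here I would invoke the equal-rank condition $\mathrm{rank}\,\Sptnr=\mathrm{rank}\,\Un=n$, which forces the Chern--Gauss--Bonnet form $\mathrm{Eu}$ of Convention~\ref{conv:voleu} to integrate to a nonzero value over $L_i\backslash\fH_n$ (equivalently, by Hirzebruch proportionality, the compact dual of $\fH_n$ has positive Euler characteristic, namely $2^{n}$), with sign $(-1)^{n(n+1)/2}$. So in fact all the volumes $\vol(\Gamma_i\backslash\CX)$ are nonzero integers. If one additionally wishes to allow Seifert-like $\Gamma$ acting on $\fH_n$ only cofinitely, as contemplated in Subsection~\ref{subsec:5.1}, one replaces $\BZ$ by $\BQ$ throughout, using the rational-valued Euler characteristic of a group with torsion, and the conclusion is unchanged.
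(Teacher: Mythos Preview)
Your argument is correct and follows the paper's own (implicit) reasoning: the paper states the corollary immediately after Theorem~\ref{thm:vol} without a separate proof, relying on the observation (already made in the introduction) that $\vol(\Gamma_i\backslash\CX)=\pm\chi(\eta(\Gamma_i))$ with $\chi(\eta(\Gamma_i))\in\BZ$ since $\eta(\Gamma_i)\backslash\fH_n$ is a closed aspherical manifold. Your additional discussion of nonvanishing via the equal-rank condition and Hirzebruch proportionality is a genuine supplement the paper does not supply; it is correct and makes the statement nonvacuous, though the paper is content to leave this point unaddressed.
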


\begin{remark}
Related to $\mathrm{Eu}$, there is a unique invariant measure on some Lie groups $G$ (not the corresponding homogeneous spaces! ) called the Euler--\Poincare measure \cite{serrearithgroup}, which  computes $\chi(\Gamma)$ for some $\Gamma$. In \cite{harder}, Harder found an explicit formula for $\chi(\Gamma)$ in terms of zeta function. In particular, 
\[
\chi(\mathrm{Sp}(2n,\mathbb{Z}))=\prod_{k=1}^n\zeta(1-2k), 
\]
\[
\chi(\mathrm{Sp}(4,\mathbb{Z}))=-\frac{1}{1440}. 
\]
This is why the  sign is added in my convention. 
For the definition of Euler characteristic (with values in $\BQ$) of a group with torsion, see   \cite[Chapter~IX]{gtm87}. 
\end{remark}

\section{Examples}
\label{sec:6}

In this section, we provide examples of Siegel--Seifert subgroups. It is shown in Proposition \ref{prop:siegelseifertarisingfrom}  that 
for any cocompact torsion-free lattice $L$ in $\Psptnr$, 
the group $\Gamma=L^\uparrow$ arising from $L$  is Siegel--Seifert. 

Proposition \ref{prop:siegelseifertarisingfrom} is discussed in Subsection \ref{subsec:6.1}. 
In 1963, Borel \cite{borel} showed that a simply connected Riemannian symmetric space $M$ always has a compact
Clifford--Klein form. Classification of lattices for some simple real Lie groups is summarized by  Audibert in \cite{splattices} (see also his thesis),  based on several general results in \cite{morris}. This is briefly sketched in Subsection \ref{subsec:6.2}. 
Subsection \ref{subsec:6.2} is aimed at providing examples of  Siegel--Seifert subgroups, starting from cocompact lattices in $\Sptnr$. The process goes as follows: Take a cocompact lattice of $\Sptnr$ in the  classification, take its image in $\Psptnr$ which is again a cocompact lattice in $\Psptnr$, 
 replace it with a torsion-free subgroup of finite index which is again a cocompact lattice, and then carry out the ``arise from'' construction.

\subsection{Subgroups arising from cocompact lattices of $\Psptnr$}
\label{subsec:6.1}

The construction in Definition \ref{def:arisesfrom} provides examples of Siegel--Seifert subgroups. 
Following \cite{morris}, a subgroup $\Gamma$ of a (linear, semisimple, having only finitely many
connected components) Lie group $G$ is a \textbf{lattice} if $\Gamma$ is discrete and $\Gamma\backslash G$ has finite volume with respect to the Haar measure on $G$. A lattice $\Gamma \subset G $ is \textbf{uniform} or \textbf{cocompact} if $\Gamma\backslash G$ is compact, and \textbf{non-uniform} otherwise. 

\begin{lem}
Given any lattice $L$ in $\Psptnr$, 
the group $\Gamma=L^\uparrow$ arising from $L$ satisfies the followings: 

(i) $\Gamma \subset  \CG\times_{Z(\CG)}\CH\CR/\CH$ is discrete, and the action $\Gamma\curvearrowright 
\CX$ is properly discontinuous, 

(ii) the action $\eta(\Gamma)\curvearrowright \fH_n$ is properly discontinuous, and

(iii) $\Gamma\cap\ker\eta$ is infinite cyclic. 
\label{lem:latticepsptnr}
\end{lem}

\begin{proof}
Part (iii) is immediate from definition. 

For part (ii), $\eta(\Gamma)=L$ is discrete. We need the following general fact: Suppose $G$ is a locally compact topological group, $H\subset G$ is a compact subgroup and $F\subset G$ is a discrete subgroup, then the action of $F$ on $G/H$ by left multiplication is properly discontinuous. Hence the action $\eta(\Gamma)\curvearrowright \fH_n$ is properly discontinuous. 

For part (i), suppose $\Gamma \subset  \CG\times_{Z(\CG)}\CH\CR/\CH$ is not discrete. Take a  sequence of elements of $\Gamma$ converging to the identity. Applying $\eta$, we obtain a sequence in $\eta(\Gamma)$ converging to the identity, which is eventually the identity as $\eta(\Gamma)$ is discrete. It follows that the sequence in $\Gamma$ is eventually contained in $\Gamma\cap\ker\eta$, which is infinite cyclic. As the sequence in $\Gamma$ converges to the identity, it must be the identity eventually. 

Thus $\Gamma$ is discrete. The same argument as in part (ii) shows that the action of $\Gamma$ is properly discontinuous. 
\end{proof}

For any lattice $L$ in $\Psptnr$, by Selberg's Lemma (see \cite[Section~4.8]{morris}), $L$ has a torsion-free subgroup of finite index.  Every finite-index subgroup of a lattice is a lattice.

Starting with any cocompact lattice in $\Psptnr$, we may replace it with a torsion-free subgroup of finite index which is again a cocompact lattice.

\begin{prop}
Given any cocompact torsion-free lattice $L$ in $\Psptnr$, 
the group $\Gamma=L^\uparrow$ arising from $L$  is Siegel--Seifert. 
\label{prop:siegelseifertarisingfrom}
\end{prop}

\begin{proof}
By Lemma \ref{lem:latticepsptnr} above and the fact that  $\eta(\Gamma)=L$, it remains to show that 
 $\Gamma \subset  \CG\times_{Z(\CG)}\CH\CR/\CH$ is  torsion-free, and the action $\Gamma\curvearrowright 
\CX$ is cocompact.

Any torsion element $x$ in $\Gamma$ is mapped to a torsion element in $\eta(\Gamma)$, which must be the identity since $\eta(\Gamma)$ is torsion free. Hence $x$ is indeed in the kernel $\Gamma\cap\ker\eta$, which is infinite cyclic. The only possibility is that $x$ itself is the identity. 

To see that the action $\Gamma\curvearrowright 
\CX$ is cocompact, note that the construction of a fundamental domain in Proposition 
\ref{prop:funddom} still works, producing a compact fundamental domain. 
\end{proof}

\subsection{Classification of lattices in $\Sptnr$}
\label{subsec:6.2}

This subsection is aimed at providing examples of  Siegel--Seifert subgroups, starting from cocompact lattices in $\Sptnr$.

The image in $\Psptnr$ of a lattice in $\Sptnr$ is a lattice in $\Psptnr$; the inverse image in $\Sptnr$ of a lattice in $\Psptnr$ is a lattice in $\Sptnr$. From considerations in the previous subsection we are lead to consider the classification of (uniform) lattices in $\Sptnr$, up to (wide) commensurability. 
This is summarized  by  Audibert in \cite[Proposition~2.1.1]{splattices} (see also his thesis), largely based on a general  description of arithmetic subgroups of almost every classical Lie group in \cite[Chapter~18]{morris}. 
For more details and definitions, see \cite{morris}, 
\cite{splattices}, \cite{spuniformlattices} and the references therein.



\begin{prop}[\cite{spuniformlattices}]
Let $n \geq 2$. The uniform lattices of $\Sptnr$ are widely
commensurable with $\mathrm{SU}(I_n,\overline{\phantom{a}};  \CO)$ for $\CO$ an order of a quaternion algebra $A$
over a totally real number field $F\neq \BQ$ such that $A$ splits at exactly one real
place of $F$. 
\end{prop}

Here is a glimpse of the proof. 
 Margulis’ Arithmeticity Theorem  implies that all lattices of $\Sptnr$ are
arithmetic. Thus by classification results in \cite{morris}, 
 lattices of $\Sptnr$ are
widely commensurable with certain special unitary groups. 
Classification of non-degenerate $\overline{\phantom{a}}$-Hermitian forms on quaternion algebra $A$ identifies these special unitary groups with the ones in the propositions above.

\begin{remark}
In  \cite[Section~18.7]{morris}, another concrete construction  of  a cocompact, arithmetic lattice is given. This construction, or alternatively Proposition 18.7.2 together with the accidental isomorphism 
\[
\mathrm{SO}(3,2)\cong \pspfr, 
\]
can be applied to obtain a candidate for a concrete computable example of a Siegel--Seifert subgroup $\Gamma$ arising from $\pspfr$. 
\end{remark}

\begin{remark}
As the anonymous referee points out, any general Siegel--Seifert subgroup $\Gamma$ (not necessarily arising from subgroups of $\Psptnr$) is an extension of $\eta(\Gamma)$ by an infinite cyclic group. Such an extension is classified by $H^2(\eta(\Gamma); \mathbb{Z})$. 
\end{remark}

\section{Volume of representations}
\label{sec:7}

Take    any connected real Lie group $G$, acting on the contractible 
homogeneous space  $G/H$ where $H$ is a fixed maximal compact
subgroup of $G$,  with a  $G$-invariant volume form $\omega$ on $G/H$.  
Associated to this data, a volume $\vol_G(M,\rho)=\vol_{(G,G/H,\omega)}(M,\rho)\in \BR$ can be assigned to any representation $\rho:\pi_1(M)\to G$ where $M$ is any oriented
closed smooth manifold of the same dimension as $G/H$. 
As $\rho$ ranges,  the supremum of absolute value of the volume function is  the $G$-representation volume $\mathrm{V}(M,G)\in[0,+\infty]$ of the manifold $M$. 
Representation volumes  have been   used to study problems related to  mapping degree.

In this section, we exhibit structure of representation spaces and also  results on the volume of representations associated to $\gxzghrh$, as corollaries to \cite{dlsw}. This is done by showing that our extended group $\gxzghrh$ is isomorphic to theirs, for certain choices of $G$ in their paper.  The main results of this section are Corollaries \ref{cor:oddeveneuler} and \ref{cor:oddevenrigid}. 

We begin with a brief introduction to volume of representations, and then the main results of this section. The two cases,  where $n$ is odd or even,  are dealt with separately in Subsections \ref{subsec:7.3} and \ref{subsec:7.4}.

As a consequence of considerations on representation volume, we show in Proposition \ref{prop:seifertpropertyd} that any Siegel--Seifert manifold $\Gamma\backslash \mathcal{X}$ has Property D (Definition \ref{defn:propertyd}), in Subsection \ref{subsec:7.5}.

\subsection{Introduction to  volume of representations}

Let $G$ be  any connected real Lie group, acting on the contractible 
homogeneous space  $G/H$ where $H$ is a fixed maximal compact
subgroup of $G$,  with a  $G$-invariant volume form $\omega$ on $G/H$ chosen and fixed. For any oriented
closed smooth manifold $M$ of applicable dimension, and any representation $\rho:\pi_1(M)\to G$, let
\[
\vol_G(M,\rho)=\vol_{(G,G/H,\omega)}(M,\rho)\in \BR
\]
be the \textbf{volume of representation} $\rho$, defined as follows:
Take a $\rho$-equivariant map from the universal cover $\tilde{M}$ of $M$ to $G/H$, pull back the invariant volume form to $\tilde{M}$, and integrate over a fundamental domain. 

As $\rho$ ranges over the \textbf{representation space} $\mathrm{Rep}(\pi_1(M), G)$ of all homomorphisms from $\pi_1(M)$ to $G$ (with the algebraic-convergence
topology),  the supremum of absolute value of the volume function is defined to be the $G$-\textbf{representation volume} $\mathrm{V}(M,G)\in[0,+\infty]$ of the manifold $M$. 
See \cite{dlsw} for detailed definitions and properties of volume of representations.


For $M$   an oriented connected closed $3$-manifold,  representation $\rho$ from $\pi_1(M)$ to $\widetilde{\mathrm{SL}(2,\mathbb{R})}\times_\BZ \BR$, and some fixed invariant volume form on $\widetilde{\mathrm{SL}(2,\mathbb{R})}$, the volume of $\rho$ and the corresponding 
 \textbf{Seifert volume} of $M$ are originally introduced by Brooks and Goldman \cite{goldman, brooksgoldmangv, brooksgoldman}. The Seifert volume behaves somehow similarly to Gromov's simplicial volume \cite{gromov}; in particular, we have a domination inequality \cite[Corollary~3.2]{dlsw} of representation volumes, which is of the form 
$
\mathrm{V}(M',G)\geq |\mathrm{deg}(f)| \cdot \mathrm{V}(M,G)
$
for $f:M'\to M$. As a consequence, Seifert volume, and more generally representation volumes, are also used to study problems related to (the set of) mapping degrees (see \cite{virposseivol, dlsw} and the references therein).

The original motivation of this paper is to generalize the works on the volume of $\widetilde{\mathrm{SL}(2,\mathbb{R})}\times_\BZ \BR$-representations to the symplectic case. 
Our volume formula \ref{thm:vol} is also partially motivated by  \cite[Theorem~3.2]{volofseirep}, which, very roughly speaking, states that the volume is the product of ``the length of fiber'' and ``the volume or Euler class of the base''. Conjecturally, a similar formula may hold for {$\gxzghrh$}-representations of Siegel--Seifert manifolds, with the ``Euler class'' of the induced $\CG/\CZ(\CG)$-representations of the base manifold suitably interpreted.

\subsection{Overview of the results}


For any finitely generated group $\pi$, if $n$ is odd, we consider maps between the representation spaces
\[
\mathrm{Rep}(\pi,\CG)\to\mathrm{Rep}(\pi,\CG\times_{Z(\CG)}\CH\CR/\CH )\to\mathrm{Rep}(\pi,\CG/Z(\CG)), 
\]
which are naturally induced by the group homomorphisms
$\CG\to \CG\times_{Z(\CG)}\CH\CR/\CH \to\CG/Z(\CG)$. 

If $n$ is even,  there are maps between the representation spaces 
\[
\mathrm{Rep}(\pi,\Sptnrt/\{\pm\id\}) \to \mathrm{Rep}(\pi,\CG\times_{Z(\CG)}\CH\CR/\CH) \to \mathrm{Rep}(\pi,\Psptnr). 
\]

%

The main results of this section are the followings.

\begin{cor}[Proposition 5.1 in \cite{dlsw} for $G=\Sptnrt$ if $n$ is odd, and  for $G=\Sptnrt/\{\pm\id\}$ if $n$ is even]
There is a characteristic class for $\Psptnr$-representations of finitely generated groups, namely, a natural assignment
$$e: \mathrm{Rep}(\pi,\Psptnr) \rightarrow H^2(\pi;\BZ)$$
for any finitely generated group $\pi$. Moreover, the following statements are true:

(1) The space of representations $\mathrm{Rep}(\pi,\Psptnr)$ is a finite union of path-connected components of an affine real algebraic variety. The characteristic class $e$ is constant over each path-connected component of $\mathrm{Rep}(\pi,\Psptnr)$.

(2) For odd $n$, the space of representations $\mathrm{Rep}(\pi,\Sptnrt)$ is an $H^1(\pi;\BZ)$-principal bundle over the union of the path-connected components of $\mathrm{Rep}(\pi,\Psptnr)$ on which $e$ is trivial.  

For even $n$, the space of representations $\mathrm{Rep}(\pi,\Sptnrt/\{\pm\id\})$ is an $H^1(\pi;\BZ)$-principal bundle over the union of the path-connected components of $\mathrm{Rep}(\pi,\Psptnr)$ on which $e$ is trivial.

(3) The space of representations $\mathrm{Rep}(\pi,\CG\times_{Z(\CG)}\CH\CR/\CH)$ is an $H^1(\pi;\BR)$-principal bundle over the union of the path-connected components of $\mathrm{Rep}(\pi,\Psptnr)$ on which $e$ is torsion.
\label{cor:oddeveneuler}
\end{cor}

\begin{cor}[Theorem 7.1 in \cite{dlsw} for $G=\Sptnrt$ if $n$ is odd, and  for $G=\Sptnrt/\{\pm\id\}$ if $n$ is even]
 For any closed oriented smooth manifold $M$ (of applicable dimension), the volume of representations
$$\operatorname{vol}_{\CG\times_{Z(\CG)}\CH\CR/\CH}:\mathrm{Rep}(\pi_1(M),\CG\times_{Z(\CG)}\CH\CR/\CH)\to\mathbb{R}$$
is constant on every path-connected component of the representation space $\mathrm{Rep}(\pi_1(M),\CG\times_{Z(\CG)}\CH\CR/\CH)$.
\label{cor:oddevenrigid}
\end{cor}


%
%
%
%
%

Corollaries \ref{cor:oddeveneuler} and \ref{cor:oddevenrigid} are corollaries to Proposition 5.1 and Theorem 7.1 in \cite{dlsw}. 
For any finitely generated group $\pi$,  
Proposition 5.1 points out the relations between representation spaces
$\mathrm{Rep}(\pi,G) $, $ \mathrm{Rep}(\pi,G_{\mathbb{R}}) $ and $ \mathrm{Rep}(\pi,\overline{G})$, in terms of a characteristic class $e_Z$. 
This characteristic class (constructed in the proof of  Proposition 5.1) can be considered a generalization of the Euler class of a representation to $\mathrm{PSL}(2,\BR)$ or more generally a representation to $\mathrm{Homeo}_+(S^1)$ \cite{ghys}. 
Theorem 7.1 shows that the volume of representations
$\operatorname{vol}_{G_{\mathbb{R}}}:\mathrm{Rep}(\pi_1(M),G_{\mathbb{R}})\to\mathbb{R}$
is constant on every path-connected component of the representation space $\mathrm{Rep}(\pi_1(M),G_{\mathbb{R}})$.

It is important to note that the definition of a semisimple Lie group in \cite{dlsw} is different from the definition given by  some other authors (for example, \cite{morris}). This definition allows one to consider some Lie groups with infinite center, or some Lie groups that do not embed in a matrix group, to be semisimple. In contrast, some authors require semisimple Lie groups to have finite center.

\subsection{Case I: $n$ is odd}
\label{subsec:7.3}

In this subsection, we treat the case where $n$ is odd. 
We prove that after fixing identifications $\CH\CR/\CH\to \BR$ and $Z_{\BR}:=Z(\CG)\otimes_{\BZ}\BR\to \BR$ (using the notation of \cite{dlsw}, to be explained in the followings), our extended group $\gxzghrh$ and their extended group $\CG_\BR:=\CG\times_{Z(\CG)} Z_\BR$ are both identified with the same quotient of $\CG\times \BR$. As a result, Proposition 5.1 and Theorem 7.1 in \cite{dlsw} in this case directly apply; however, the other case needs some modifications in order for these results to apply.

Recall from Lemma \ref{lem:center} that the center of $\Sptnrt$ is $\pi^{-1}(\{\pm\mathrm{id}\})$, and for $n$ odd $\pi^{-1}(\{\pm\mathrm{id}\})\cong \BZ$. The point is that in this case $-\mathrm{id}\notin \Sun$. In the language of the proof of Lemma-Definition \ref{lemdef:ghr}, this center is generated by $\alpha:=$ $\frac{1}{2}$-th lift of $-\mathrm{id}$. We introduce the lifted determinant map to make the definition of $\alpha$ neat: 

\begin{lem}
Let $\widetilde{\mathrm{det}}:\Unt\to \BR$ be the (topological) lift of $\det:\Un\to S^1$ such that the identity is sent to the identity. Then $\dett$ is indeed a Lie group homomorphism, and $\ker(\dett)=\Sun$. 
\end{lem}

With $\dett$ in hand, we may now define: 
\[
\alpha \in Z(\CG) \text{ is the lift of } -\mathrm{id} \text{ with } \dett(\alpha)=\frac{1}{2}. 
\]

In accordance with \cite{dlsw}, we adopt the following 
\begin{conv}
In this subsection, group operations in $Z=Z(\CG)$, $Z_{\BR}=Z(\CG)\otimes_{\BZ}\BR$, $\hrh$ and $\BR$ will be written additively. 
\end{conv}

The  extended group $\CG_\BR$ constructed in \cite{dlsw}  is defined to be 
\[
\CG_\BR:=\CG\times_{Z(\CG)} Z_\BR=
\frac{\CG\times Z_\BR}{  \{(z, -z\otimes 1) \mid z\in Z(\CG)\}   }, 
\]
where $Z_{\BR}=Z(\CG)\otimes_{\BZ}\BR$. 
After fixing an auxiliary identification
\[
Z_{\BR} \to \BR, \quad \alpha \otimes b \mapsto \frac{1}{2}b, 
\]
the extended group $\CG_\BR$ becomes 
\[
\CG_\BR \cong
\frac{\CG\times\BR}{      \{(k\alpha, -\frac{k}{2})\mid k\in \BZ\}       }, 
\]
as this center is generated by $\alpha$.

Written additively, our extended group is 
\[
\CG\times_{Z(\CG)}\CH\CR/\CH:=  \frac{\CG\times \CH\CR/\CH}{(i,-(\iota))Z(\CG)}, 
\]
where $\iota:Z(\CG)\to \CH\CR \to \hrh$. 
By the lemma above, $\dett:\CH\CR\to \BR$ factorizes as $\dett:\CH\CR\to \hrh \to \BR$. We take the second homomorphism $\hrh \to \BR$ to be the identification. Under this identification,  $\iota:Z(\CG)\to \CH\CR \to \hrh\to \BR$ sends $\alpha $ to $\frac{1}{2}$. Thus
\[
\CG\times_{Z(\CG)}\CH\CR/\CH \cong 
\frac{\CG\times \BR}{      \{(k\alpha, -\frac{k}{2})\mid k\in \BZ\}       }
\cong \CG_\BR. 
\]


Consequently, 
 Proposition 5.1 and Theorem 7.1 in \cite{dlsw} in this case directly apply.

\subsection{Case II: $n$ is even}

\label{subsec:7.4}

In Lemma \ref{lem:center}, it is proved that the center of $\Sptnrt$ is $\pi^{-1}(\{\pm\mathrm{id}\})$ and for $n $ even, $ \pi^{-1}(\{\pm\mathrm{id}\})\cong \BZ\oplus \BZ/2\BZ$. 
The center of $\Sptnrt/\{\pm\id\}$ is equal to $Z(\CG)/\{\pm\id\}$, which is isomorphic to $\BZ$. It is generated by the element $\beta:=$ image in $Z(\CG)/\{\pm\id\}$ of the first lift of $\id$, which is also the image in $Z(\CG)/\{\pm\id\}$ of the first lift of $-\id$. In terms of $\dett$, we may define: 
\[
\beta\in Z(\CG)/\{\pm\id\} \text{ is the element represented by the lift of } \id \text{ with } \dett =1. 
\]

We adopt the following 
\begin{conv}
In this subsection, group operations in $Z(\CG)/\{\pm\id\}$, $(Z(\CG)/\{\pm\id\})\otimes_\BZ \BR$, $Z(\CG)$, $\hrh$ and $\BR$ will be written additively. 
\end{conv}

For $G=\Sptnrt/\{\pm\id\}=\CG/\{\pm\id\}$, the construction in \cite{dlsw} gives
\[
G_\BR:=
\frac{\left(\CG/\{\pm\id\}\right)\times\left((Z(\CG)/\{\pm\id\})\otimes_\BZ \BR\right)}{ \left\{(z,-z\otimes 1)\mid z\in Z(\CG)/\{\pm\id\}\right\}  }. 
\]
Fix the identification
\[
(Z(\CG)/\{\pm\id\})\otimes_\BZ \BR\to \BR, \quad \beta\otimes b\to b. 
\]
Then $G_\BR$ is identified with
\[
G_\BR\cong
\frac{\left(\CG/\{\pm\id\}\right)\times\BR}{ \left\{(k\beta, -k)\mid k\in \BZ\right\}  }. 
\]

On the other hand, our construction in this case gives
\[
\begin{aligned}
\CG\times_{Z(\CG)}\CH\CR/\CH := & \frac{\CG\times \CH\CR/\CH}{(i,-(\iota))Z(\CG)}\\
\cong & \frac{(\CG\times \CH\CR/\CH)/\{(\pm \id, 0)\}}{((i,-(\iota))Z(\CG))/\{(\pm \id, 0)\}}\\
\cong & \frac{\left(\CG/\{\pm\id\}\right)\times \CH\CR/\CH}{((i,-(\iota))Z(\CG))/\{(\pm \id, 0)\}}. \\
\end{aligned}
\]
As in the first case, we can pick the identification $\hrh\to \BR$ that is induced from $\dett$. Hence
\[
\begin{aligned}
\CG\times_{Z(\CG)}\CH\CR/\CH\cong & \frac{\left(\CG/\{\pm\id\}\right)\times \BR}{\left\{(z,-\dett(z))\mid z\in Z(\CG)\right\}/\{(\pm \id, 0)\}}\\
\cong & \frac{\left(\CG/\{\pm\id\}\right)\times \BR}{\{(k\beta, -k)\mid k\in \BZ\}}\\
\cong & G_\BR.  \\
\end{aligned}
\]

Accordingly, the  corollaries are obtained.

\subsection{Applications to mapping degree problem}
\label{subsec:7.5}

We will show that Siegel--Seifert manifolds satisfy the following property: 
\begin{defn}
A closed orientable smooth manifold $N$ has \textbf{Property D} if for all closed orientable smooth manifold $M$ of the same dimension, the set of mapping degrees
\[
\mathcal{D}(M,N)=\{|\mathrm{deg}(f)|\mid f\in C^\infty(M,N)\}
\]
is always finite. 
\label{defn:propertyd}
\end{defn}

Among $2$-dimensional manifolds, those with Property D are those of the geometry $\mathbb{H}^2$. 
In dimension $3$, manifolds with Property D are those which contain non-geometric prime factors, or prime factors of the geometry $\mathbb{H}^3$ or $\sltrt$, by \cite[Corollary~1.6]{virposseivol}. In \cite[Theorem~1.3]{dlsw}, it is proved that if $(X, G)$ is a geometry where $X$ is contractible and $G$ is semisimple, then
every orientable closed manifold locally modeled on $(X, G)$ has Property D. See \cite{dlsw} for an overview of results on Property D.

We need the following lemma: 
\begin{lem}[\cite{dlsw}, Proposition 3.1(1)]
Let $M$, $M'$ be closed oriented smooth manifolds of the same dimension as $\CX$. 
For any smooth
map $f : M' \to M$  and any representation $\rho:\pi_1(M)\to \CG\times_{Z(\CG)}\CH\CR/\CH$, 
\[
\mathrm{vol}_{\CG\times_{Z(\CG)}\CH\CR/\CH}(M', f^*(\rho)) = \mathrm{deg}(f) \cdot \mathrm{vol}_{\CG\times_{Z(\CG)}\CH\CR/\CH}(M, \rho),
\]
where $\mathrm{deg}(f)$ is the signed mapping degree of $f$. 
As a consequence, we have the domination inequality
\[
\mathrm{V}(M',\CG\times_{Z(\CG)}\CH\CR/\CH)\geq|\mathrm{deg}(f)|\cdot\mathrm{V}(M,\CG\times_{Z(\CG)}\CH\CR/\CH).
\]
\end{lem}

The key point in its proof is that for any developing map $D_\rho: \widetilde{M} \to \CX$, the composed map $D_\rho \circ \tilde{f}: \widetilde{M'} \to \widetilde{M} \to \CX$ is a developing map for $f^*(\rho)$.

\begin{prop}
Any Siegel--Seifert manifold $\Gamma\backslash \mathcal{X}$ has Property D. 
\label{prop:seifertpropertyd}
\end{prop}

\begin{proof}
Take a Siegel--Seifert manifold $\Gamma\backslash \mathcal{X}$ with corresponding Siegel--Seifert subgroup $\Gamma \subset  \CG\times_{Z(\CG)}\CH\CR/\CH$. It follows directly from the definition that  $\Gamma\backslash \mathcal{X}$ is a closed oriented smooth manifold of applicable dimension, having the same dimension as $\CX$. 

For the embedding $\rho:\Gamma \to  \CG\times_{Z(\CG)}\CH\CR/\CH$ considered as a representation, 
$ \operatorname{vol}_{\CG\times_{Z(\CG)}\CH\CR/\CH}(\Gamma\backslash \mathcal{X}, \rho)$ is just the volume of $\Gamma\backslash \mathcal{X}$ in the sense of previous sections, which is nonzero. Hence after taking supremum, the resulting $\CG\times_{Z(\CG)}\CH\CR/\CH$-representation volume $\mathrm{V}(\Gamma\backslash \mathcal{X},\CG\times_{Z(\CG)}\CH\CR/\CH)$ is nonzero. 
Plugging in the domination inequality above, we see that  $\Gamma\backslash \mathcal{X}$ has Property D. 
\end{proof}

\bibliographystyle{amsalpha}
\bibliography{spbib}
\end{document}